\documentclass[reqno,11pt]{amsart}

\usepackage{amsmath,amssymb,amsfonts,amsthm, amscd,indentfirst}
\usepackage{amsmath,latexsym,amssymb,amsmath,
	amscd,amsthm,amsxtra,mathrsfs}
 
\usepackage[top=3cm, bottom=3.5cm, right=2.5cm, left=2.2cm]{geometry}

\usepackage[hyphens]{url} 
\usepackage{hyperref}
\usepackage{bookmark}
\usepackage{amsmath,thmtools,mathtools}
\usepackage{etoolbox}

\allowdisplaybreaks 
\mathtoolsset{showonlyrefs=true}
\DeclareRobustCommand{\textsection}{\ifmmode\mathsection\else\S\fi}

\usepackage[
  backend=biber,
  style=numeric,
  sorting=nyt,
  maxbibnames=99,
  giveninits=true,
]{biblatex}
\renewbibmacro{in:}{}

\DeclareFieldFormat[article,inproceedings,incollection,unpublished]{title}{#1}
\DeclareFieldFormat[article,inproceedings,incollection,unpublished]{citetitle}{#1}

\DeclareSourcemap{%
  \maps[datatype=bibtex]{%
    \map{\step[fieldsource=shortjournal, fieldtarget=journaltitle]}%
  }%
}

\makeatletter
\newcommand*\bbx@lasthash{}
\newtoggle{bbx@dashed}
\AtBeginBibliography{\global\let\bbx@lasthash\empty}
\AtEveryBibitem{%
  \global\togglefalse{bbx@dashed}%
  \iffieldundef{fullhash}
    {\global\let\bbx@lasthash\empty}
    {\iffieldequalstr{fullhash}{\bbx@lasthash}{\global\toggletrue{bbx@dashed}}{}%
     \xdef\bbx@lasthash{\thefield{fullhash}}}%
}
\DeclareNameFormat{labelname}{%
  \ifboolexpr{togl {bbx@dashed}}
    {\ifnum\value{listcount}=1\relax\bibnamedash\fi}
    {\nameparts{#1}%
     \usebibmacro{name:family-given}
       {\namepartfamily}
       {\namepartgiven}
       {\namepartprefix}
       {\namepartsuffix}%
     \usebibmacro{name:andothers}}%
}
\makeatother

\newcounter{mparcnt}

\usepackage{fancyhdr}
\usepackage{esint}
\usepackage{enumerate}
\usepackage{xcolor}

\usepackage{pictexwd,dcpic}
\usepackage{graphicx}
\usepackage{caption}

\usepackage{graphicx}
\usepackage{caption}
\usepackage{slashed}

\usepackage{epsfig,here}
\usepackage{subfigure,here}

\newtheorem{theorem}{Theorem}[section]
\newtheorem{lemma}[theorem]{Lemma}
\newtheorem{proposition}[theorem]{Proposition}
\newtheorem{definition}[theorem]{Definition}
\newtheorem{corollary}[theorem]{Corollary}
\newtheorem{remark}[theorem]{Remark}

\newcommand{\abs}[1]{\lvert#1\rvert}
\newcommand{\Abs}[1]{\left\lvert#1\right\rvert}

\newcommand{\norm}[1]{\lVert#1\rVert}

\newcommand{\rd}{{\rm d}}
\newcommand{\rdV}{{\rm dV}}

\newcommand{\rVol}{{\rm Vol}}

\newcommand{\rid}{{\rm id}}
\newcommand{\rtr}{{\rm tr}}
\newcommand{\rdiv}{{\rm div}}

\newcommand{\D}{{\slashed{D}}}
\newcommand{\rRe}{{\rm Re}}

\newcommand{\curl}{{\rm curl}\,}
\newcommand{\ip}{\lrcorner\,}
\newcommand{\w}{\wedge}

\def\<{\langle}
\def\>{\rangle}
\def\S{\mathbb{S}}
\def\R{\mathbb{R}}

\newcommand{\ra}{\rightarrow}

\def\SS{{\mathbb S}}

\newcommand{\eq}[1]{\begin{equation}\allowdisplaybreaks\begin{alignedat}{2} #1 \end{alignedat}\end{equation}}

\numberwithin{equation} {section}

\begin{document}

	
\title[The sharp curl--Sobolev inequality]
{The sharp curl--Sobolev inequality 
}
\date{\today}

\author{Guofang Wang}
\address{ Albert-Ludwigs-Universit\"at Freiburg,
Mathematisches Institut,
Ernst-Zermelo-Str. 1,
D-79104 Freiburg, Germany}
\email{guofang.wang@math.uni-freiburg.de}

\author{Mingwei Zhang}
\address{ Wuhan University, School of Mathematics and Statistics, 430072 Wuhan, China and 
Albert-Ludwigs-Universit\"at Freiburg,
Mathematisches Institut,
Ernst-Zermelo-Str. 1,
D-79104 Freiburg, Germany}
\email{zhangmwmath@whu.edu.cn}

\begin{abstract}
We solve a longstanding problem, going back at least to Rivi\`ere~\cite{Riviere98CAG} and open even in the physically most relevant case $n=3$, by proving a sharp curl--Sobolev inequality for differential forms on the round sphere $\S^n$ when $n\equiv 3\pmod 4$. More precisely, for every $\frac{n-1}{2}$-form $\alpha$ on $\S^n$ with $\int_{\S^n}\<\curl\alpha,\alpha\>\,\rdV>0$, the conformally invariant quotient satisfies
\[
    \frac{\Bigl(\int_{\S^n}\abs{\curl\alpha}^{\frac{2n}{n+1}}\,\rdV\Bigr)^{\frac{n+1}{n}}}{\int_{\S^n}\<\curl\alpha,\alpha\>\,\rdV}
    \ge \frac{n+1}{2}\,\omega_n^{\frac1n}.
\]
We also classify all extremals: equality holds if and only if $\alpha$ is a positive Killing $\frac{n-1}{2}$-form, modulo orientation-preserving conformal transformations and modulo $\ker(\rd)$. By conformal invariance, the same sharp inequality (with the same constant and the corresponding class of extremals up to pullback) holds on $\R^n$.

We then give geometric and variational applications that settle several open problems and conjectures in geometry and mathematical physics. First, for the conformal invariant
\[
    \mu([g])\coloneqq \inf_{\tilde g\in[g]}\lambda_1^+(\tilde g)\,\rVol(\tilde g)^{1/n},
\]
where $\lambda_1^+(\tilde g)$ denotes the first positive curl eigenvalue, we show that on $\S^n$ the round metric is the unique optimizer for $\mu([g])$ (a Hersch/El Soufi--Ilias type theorem for curl). Second, we prove that the unique minimizers of the $3$-energy $\int_{\S^3}\abs{\rd u}^3$ in the homotopy class of the Hopf map $\pi:\S^3\to\S^2$ are exactly $\pi\circ\Phi$ with $\Phi\in{\rm Conf}^+(\S^3)$, confirming a conjecture of Rivi\`ere~\cite{Riviere98CAG}. Third, for the Faddeev--Skyrme energy $\mathcal{FS}_\rho$ on $\S^3$, we establish global minimality of the Hopf map in the full predicted range: for every coupling constant $\rho\le \sqrt{2}$, the unique global minimizers in its homotopy class are precisely $\pi\circ R$ with $R\in\mathrm{SO}(4)$, as expected since Ward~\cite{Ward99}. Fourth, in the presence of Dirac zero modes on $\S^3$, we prove the sharp lower bound $\norm{\curl A}_{3/2} \ge 3\omega_3^{2/3}$ for the magnetic field and characterize equality in terms of Killing spinors; in particular, this yields a sharp criterion for the existence of zero modes and answers a question of Frank--Loss~\cite{FL1} for $n=3$.

\medskip
\noindent{\bf MSC 2020: }35A23, 46E35, 58A10

\noindent{\bf Keywords:} curl--Sobolev inequality; Killing forms; conformal invariance; Hopf map; $p$-harmonic map; Faddeev--Skyrme model; zero mode; Killing spinor
\end{abstract}

\maketitle
\tableofcontents

\section{Introduction}

\bigskip

We resolve the following longstanding conjecture.

\begin{theorem}\label{main_thm}
Let $n\equiv 3\pmod 4$, and let $\alpha$ be an $\frac{n-1}{2}$-form on $\S^n$. Set
\eq{
    J(\alpha) \coloneqq \frac{ \Big(\int_{\S^n} \abs{\curl\alpha}^{\frac{2n}{n+1}} \,\rdV \Big)^{\frac{n+1}{n}} }{ \int_{\S^n} \< \curl\alpha, \alpha\> \,\rdV},
}
then
\eq{
  \inf\left\{J(\alpha)\,\middle|\, \int_{\S^n} \< \curl\alpha,\alpha\>>0\right\}
  = \frac{n+1}{2}\omega_n^{\frac{1}{n}},
}
where $\omega_n$ is the area of the standard round sphere $\S^n$.
Moreover, the minimizers are precisely positive Killing $\frac{n-1}{2}$-forms modulo orientation-preserving conformal transformations and modulo $\ker(\rd)$. Here $\curl\coloneqq\ast \rd$ is the curl operator.
\end{theorem}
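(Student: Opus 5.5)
Write $k=\frac{n-1}{2}$ and $p=\frac{2n}{n+1}$, and put $\beta\coloneqq\curl\alpha=\ast\rd\alpha$, which is a co-closed (in fact $\rd^\ast$-exact) $k$-form. The denominator depends only on $\beta$: if $\beta=\curl\alpha$, then $\int\<\beta,\alpha\>=\int\<\ast\rd\alpha,\alpha\>$ is unchanged under $\alpha\mapsto\alpha+\ker\rd$. Since $n\equiv 3\pmod 4$, $\curl$ is self-adjoint on $k$-forms. Hence $\int\<\curl\alpha,\alpha\>=\int\<\beta,\curl^{-1}\beta\>$, where $\curl^{-1}$ is taken on $(\ker\curl)^\perp$. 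The statement is therefore a sharp bound
\[
\int\<\beta,\curl^{-1}\beta\>\le C\,\norm{\beta}_{p}^{2},
\]
an HLS-type inequality for the operator $\curl^{-1}$, which has order $-1$ and kernel homogeneous of degree $-(n-1)$. By conformal invariance it is equivalent to the corresponding statement on $\R^n$, where $|\beta|^p\rdV$ is conformally invariant because $\beta$ is a $k$-form with $pk\cdot\frac{2}{?}$ weights matching ($|\beta|^p\sim$ density of weight $n$, since $p(k+1)\cdot$ conformal factor balances). The plan is therefore a variational one.

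\textbf{Step 1 (existence of a minimizer).} I would run concentration-compactness on a minimizing sequence in the space of co-closed $k$-forms in $L^p$. Conformal invariance under ${\rm Conf}^+(\S^n)$ lets me normalize the center of mass. Concentration of the sequence at a point is excluded by comparing with the $\R^n$ problem, which has the same constant by conformal equivalence. Loss of compactness is thereby reduced to a dichotomy ruled out by strict subadditivity, since the functional is homogeneous of degree $2$ versus $2/p\cdot$. Note that the denominator is a bilinear form with a compact (order $-1$) operator on $\S^n$ after normalization, which is what drives the compactness argument.

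\textbf{Step 2 (Euler--Lagrange equation and regularity).} A minimizer $\alpha$ satisfies
\[
\curl\bigl(|\beta|^{p-2}\beta\bigr)=\lambda\,\beta,\qquad \beta=\curl\alpha,
\]
possibly up to adding an element of $\ker\rd$; the correct formulation is the one obtained after projecting to co-closed forms. I would then establish smoothness away from the zero set and H\"older regularity, using $p$-Laplacian-type estimates.

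\textbf{Step 3 (classification, the main obstacle).} The goal is to show that $|\beta|$ is constant after a conformal change. Set $\eta\coloneqq|\beta|^{p-2}\beta$ and consider the conformal metric $\tilde g=|\beta|^{\frac{4}{n+1}}g$ (up to normalization). In this metric $\beta$ should become a form of constant length satisfying $\tilde\curl\,\tilde\beta=\lambda\tilde\beta$. It then remains to show that a unit-length $\curl$-eigenform achieving the bound forces $\tilde g$ to be round and $\tilde\beta$ to be Killing. For this I would use a Bochner/Weitzenb\"ock formula for $k$-forms combined with the twistor (conformal Killing) operator: the integrated identity $\int|T\beta|^2=\dots\ge 0$ gives the sharp eigenvalue estimate
\[
\lambda_1^+\,\rVol^{1/n}\ge\frac{n+1}{2}\,\omega_n^{1/n},
\]
with equality iff $\beta$ is Killing. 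This is the step I expect to be hardest: handling the zero set of $\beta$, where the conformal metric degenerates, and establishing the Hersch-type eigenvalue bound in the conformal class. A promising route to try first is to transplant Hijazi-type arguments for the Dirac operator to the conformally covariant operator $\curl$ on middle forms, using $\eta$ as a test form in the Rayleigh quotient together with H\"older's inequality.

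\textbf{Step 4 (conclusion).} Evaluate $J$ on a Killing $k$-form on round $\S^n$, where $\curl\beta=\frac{n+1}{2}\beta$ and $|\beta|$ is constant; this gives exactly $\frac{n+1}{2}\omega_n^{1/n}$. The equality cases traced back through Steps 1--3 then yield the stated classification: positive Killing forms, modulo ${\rm Conf}^+$ and modulo $\ker\rd$.
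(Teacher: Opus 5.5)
The gap is Step~3, where the whole proof has to happen, and as formulated it is circular. The bound $\lambda_1^+(\tilde g)\,\rVol(\tilde g)^{1/n}\ge\frac{n+1}{2}\,\omega_n^{1/n}$ for all $\tilde g\in[g_{\rm st}]$ is not an auxiliary input. Testing $J$ on a first eigenform (plus H\"older) gives $\inf J\le\lambda_1^+(\tilde g)\rVol(\tilde g)^{1/n}$. Conversely, choosing $\tilde g$ so that $\abs{\curl\alpha}_{\tilde g}$ is constant gives the reverse inequality. Hence $\inf_{\tilde g\in[g]}\lambda_1^+(\tilde g)\rVol(\tilde g)^{1/n}=\inf J$, so this bound \emph{is} the theorem.

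The mechanism you suggest does not prove it either. In $\tilde g=f^{\frac{4}{n-1}}g_{\rm st}$, with $f=\abs{\alpha}$, the normalized minimizer is indeed a unit-length, $\tilde g$-co-closed eigenform with eigenvalue $\frac{n+1}{2}$. The twistor splitting gives $\abs{\tilde\nabla\alpha}^2=\frac{n+1}{2}+\abs{T\alpha}^2$, and Bochner in $\tilde g$ then only yields $\int\langle\tilde{\mathcal R}\alpha,\alpha\rangle\le\frac{n^2-1}{4}\rVol(\tilde g)$. For middle-degree forms ($n-2k=1$), the Weitzenb\"ock term of a conformally flat metric involves the full Schouten tensor contracted with $\langle e_i\ip\alpha,e_j\ip\alpha\rangle$, i.e. the Hessian of the conformal factor in an anisotropic way. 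In Hijazi's Dirac argument only scalar curvature appears; here nothing reduces to the Yamabe functional, and your plan has no step converting this into a lower bound on $\rVol(\tilde g)$.

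Here is what is missing. Stay in the round metric, where the curvature term is the constant $\frac{n^2-1}{4}$, and write $\alpha=f\beta$ with $\abs{\beta}=1$. Split $\nabla\beta=P+Q+S$ orthogonally into three pieces:
\begin{itemize}
\item a part linear in $\rd\log f$;
\item the part $f^{\frac{2}{n-1}}X\ip\ast\beta$;
\item a remainder $S$ with vanishing wedge- and contraction-traces.
\end{itemize}
Here $S$ is exactly $f^{-1}$ times your twistor part $T\alpha$ in $\tilde g$. This gives the Kato identity $\abs{\nabla\alpha}^2=\frac{n+1}{n-1}\abs{\nabla f}^2+\frac{4}{(n-1)^2}\abs{\nabla f\ip\beta}^2+\frac{n+1}{2}f^{\frac{2(n+1)}{n-1}}+f^2\abs{S}^2$. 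The Euler--Lagrange equation forces $\rd^\ast\alpha=\frac{2}{n-1}\nabla f\ip\beta$, so the anisotropic term cancels exactly against $\int\abs{\rd^\ast\alpha}^2$ in the Bochner formula. What remains is $\frac{(n-1)^2}{4}\int f^{\frac{2(n+1)}{n-1}}\ge\int\abs{\nabla f}^2+\frac{(n-1)^2}{4}\int f^2$.

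To close the argument you need a sharp \emph{subcritical} Gagliardo--Nirenberg inequality on $\S^n$ with exponents $\frac{2(n+1)}{n-1}$ and $\frac{2n}{n-1}$. The paper obtains it from Del~Pino--Dolbeault on $\R^n$ by averaging it with its Kelvin transform. The naive route via the critical Sobolev inequality and H\"older only gives $\rVol(\tilde g)^{\frac2n}\ge\frac{n(n-2)}{(n-1)^2}\omega_n^{\frac2n}$. Equality then forces $f$ to be a conformal factor and $S=0$, which gives the Killing classification directly. Working in $g_{\rm st}$ also avoids the degeneration of $\tilde g$ on $\{\alpha=0\}$.

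Separately, your Step~1 reasoning is wrong. The operator ${\rm curl}^{-1}$ maps $L^{\frac{2n}{n+1}}$ into exactly the critical space $L^{\frac{2n}{n-1}}$, so it is not compact. Moreover, equal constants on $\S^n$ and $\R^n$ is precisely the case where bubbling is \emph{not} excluded by comparison. Existence of a minimizer needs conformal renormalization, or Rivi\`ere's existence result.
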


We remark that in the case $n\equiv1\pmod4$, one can instead define the curl operator by $\curl\coloneqq i\ast\rd$, where $i$ is the imaginary unit, in order to preserve self-adjointness. 
Using the same argument, one can analogously obtain the same result as Theorem~\ref{main_thm} for complex-valued differential forms.
In this paper, we focus on the case $n\equiv3\pmod4$.

Since $J(\alpha)$ is conformally invariant (see Subsection~\ref{sec2.2}), Theorem~\ref{main_thm} is equivalent (via stereographic projection) to the corresponding sharp inequality on $\R^n$:
\eq{
    \frac{ \Big(\int_{\R^n} \abs{\curl\alpha}^{\frac{2n}{n+1}} \,\rdV \Big)^{\frac{n+1}{n}} }{ \int_{\R^n} \< \curl\alpha, \alpha\> \,\rdV} \geq \frac{n+1}{2}\omega_n^{\frac{1}{n}}.
}

The case $n=3$ is particularly important for applications (notably in physics). For convenience, we restate the result for vector fields, using the classical notation $\curl=\nabla\times$.
\begin{corollary}\label{thm_form_3D}
Let $n=3$. For any vector field $v$ on $\S^3$, we have
\eq{
   \frac{ \Big(\int_{\S^3} \abs{\nabla \times v }^{\frac{3}{2}} \,\rdV \Big)^{\frac{4}{3}} }{ \int_{\S^3} \< \nabla\times v, v\> \,\rdV} \ge 2\omega_3^{\frac 13}, \quad\hbox{whenever}\quad \int_{\S^3} \< \nabla\times v, v\> \,\rdV > 0,
}
with equality if and only if $v$ is a positive Killing vector field, modulo orientation-preserving conformal transformations and modulo $\ker(\nabla\times)$.
Equivalently, the following inequality holds on $\R^3$:
\eq{\label{eq:curl}
   \frac{ \Big(\int _{\R^3}\abs{\nabla \times v }^{\frac{3}{2}} \,\rdV \Big)^{\frac{4}{3}} }{ \int _{\R^3}\< \nabla\times v, v\> \,\rdV} \ge 2\omega_3^{\frac 13},
}
with equality if and only if
\eq{
    v = \frac{3}{(1+\abs{x}^2)^2}  \begin{pmatrix}
        1+x_1^2-x_2^2-x_3^2\\
        2(x_1x_2-x_3)\\
        2(x_1x_3+x_2)
    \end{pmatrix}
}
modulo orientation-preserving conformal transformations and modulo $\ker(\nabla\times)$.
\end{corollary}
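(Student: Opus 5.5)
The plan is to obtain Corollary~\ref{thm_form_3D} as the case $n=3$ of Theorem~\ref{main_thm}, and then transport it to $\R^3$ by stereographic projection. For $n=3$ we have $\frac{n-1}{2}=1$, $\frac{2n}{n+1}=\frac32$, $\frac{n+1}{n}=\frac43$ and $\frac{n+1}{2}\omega_n^{1/n}=2\omega_3^{1/3}$.

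\textbf{Step 1: translation to vector fields.} Identify a vector field $v$ with its metric dual $1$-form $\alpha=v^\flat$. Then $\curl\alpha=\ast\rd\alpha$ is again a $1$-form, dual to $\nabla\times v$. Pointwise we have $\abs{\curl\alpha}=\abs{\nabla\times v}$ and $\<\curl\alpha,\alpha\>=\<\nabla\times v,v\>$. Also $\ker(\rd)$ on $1$-forms corresponds to $\ker(\nabla\times)$, and Killing $1$-forms correspond to Killing vector fields. Hence $J(\alpha)$ is exactly the quotient in the corollary, and both the inequality and the characterization of extremals on $\S^3$ follow directly from Theorem~\ref{main_thm}.

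\textbf{Step 2: passage to $\R^3$.} Use the conformal invariance of $J$ (Subsection~\ref{sec2.2}) under pullback by the stereographic projection $\sigma:\R^3\to\S^3\setminus\{N\}$, acting on $1$-forms. The check is direct:
\begin{itemize}
\item since $\sigma^*$ commutes with $\rd$, $\int\<\curl\alpha,\alpha\>\,\rdV=\int \rd\alpha\w\alpha$ is metric-independent;
\item $\int\abs{\rd\alpha}^{3/2}\,\rdV$ is conformally invariant, because on $2$-forms in dimension $3$ we have $\abs{\cdot}_{\tilde g}=e^{-2u}\abs{\cdot}_g$ for $\tilde g=e^{2u}g$, and $\rdV_{\tilde g}=e^{3u}\rdV_g$.
\end{itemize}
This requires a density argument, since forms on $\R^3$ with finite energy pull back to forms on $\S^3$ with a possible point singularity at $N$. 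I would argue that a point has zero capacity at these integrability exponents, so compactly supported smooth $v$ suffice and correspond to forms on $\S^3$ that vanish near $N$.

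\textbf{Step 3: explicit extremal.} Take the Hopf-type Killing field on $\S^3\subset\R^4$,
\[
K(y)=(-y_2,\,y_1,\,-y_4,\,y_3),
\]
or the orientation variant for which $\curl K=2K$, which is what makes it positive. Pull its dual $1$-form back via $\sigma(x)=\bigl(2x,\;\abs{x}^2-1\bigr)/(1+\abs{x}^2)$, then raise the index with the Euclidean metric. Since $\sigma^*g_{\S^3}=\frac{4}{(1+\abs{x}^2)^2}g_{\R^3}$, the resulting Euclidean vector field has components $\frac{4}{(1+\abs{x}^2)^2}\,\<K\circ\sigma,\partial_i\sigma\>$. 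Computing $\partial_i\sigma$ should reproduce the displayed vector, up to the normalizing constant $3$ (a scale that does not affect $J$) and an orientation or coordinate relabeling.

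\textbf{Main obstacle.} The main obstacle is the bookkeeping in Step 3: choosing the correct orientation and coordinate ordering so that the field is \emph{positive} and matches the stated formula exactly. I would resolve this by directly computing $\nabla\times v$ for the displayed $v$, checking that it equals $\frac{4}{1+\abs{x}^2}v$ up to the constant, which is the Euclidean form of $\curl K=2K$.
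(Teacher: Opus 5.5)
Your route is the paper's. The corollary is stated there as the case $n=3$ of Theorem~\ref{main_thm}, read through $v\leftrightarrow v^\flat$, where $\frac{n+1}{2}\omega_n^{1/n}=2\omega_3^{1/3}$. The $\R^3$ version comes from the conformal invariance of both integrals in $J$ (Subsection~\ref{sec2.2}) under stereographic projection, and the paper gives no further argument.

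There is one computational slip in Step~3. Raising the index of the $1$-form $\sigma^*K^\flat$ with $g_{\R^3}$ involves no conformal factor. The Euclidean field therefore has components $\<K\circ\sigma,\partial_i\sigma\>_{\R^4}$, not $\frac{4}{(1+\abs{x}^2)^2}\<K\circ\sigma,\partial_i\sigma\>_{\R^4}$. A conformal factor would appear only if you pushed $K$ forward as a vector field, and then it would be $\frac{(1+\abs{x}^2)^2}{4}$; but $J$ is invariant for the pulled-back $1$-form, not for that field. With $K=(-y_2,y_1,-y_4,y_3)$ one gets
\[
\frac{2}{(1+\abs{x}^2)^2}\bigl(2(x_1x_3-x_2),\;2(x_2x_3+x_1),\;1-x_1^2-x_2^2+x_3^2\bigr).
\]
Apply the cyclic rotation $x\mapsto(x_3,x_1,x_2)$, an element of ${\rm SO}(3)$, to both base point and vector. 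The result is exactly $\frac23$ of the displayed field, so no orientation flip is needed. With your extra factor the decay would instead be $(1+\abs{x}^2)^{-4}$, and the formula would not match.

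Your fallback check is sound. The displayed $v$ does satisfy $\nabla\times v=\frac{4}{1+\abs{x}^2}\,v$, which is the Euclidean form of $\curl K=2K$, and this check would have exposed the spurious factor.
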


Theorem~\ref{main_thm} settles an open problem going back at least to Rivi\`ere's 1998 paper~\cite{Riviere98CAG}. More precisely, Rivi\`ere proved that Killing forms are global minimizers of
\eq{\label{J_q}
  J_q(\alpha) \coloneqq \frac{ \Big(\int_{\S^n} \abs{\curl\alpha}^{q
  } \,\rdV \Big)^{\frac 2 q
  } }{ \int_{\S^n} \< \curl\alpha, \alpha\> \,\rdV},
}
for $q\ge 2$. He also showed that they are strict local minimizers (in the $C^1$ topology) for $q\in\bigl(\frac{2n}{n+1},2\bigr)$, and local minimizers for $q=\frac{2n}{n+1}$. The remaining question was whether Killing forms are global minimizers throughout the full range $q\in\bigl[\frac{2n}{n+1},2\bigr)$.

By H\"older's inequality, Theorem~\ref{main_thm} yields an affirmative answer for all $q\in\bigl[\frac{2n}{n+1},2\bigr)$, together with the sharp estimate
\eq{ J_q(\alpha) \ge \frac{n+1}{2}\,\omega_n^{\frac 2 q-1}.}
This global minimality also yields a quantitative stability result, in the spirit of Figalli--Zhang~\cite{Figalli_Zhang_20}; see \cite{WaZh26}.

In this paper, we do not address regularity issues.
Each result holds in suitable Sobolev spaces, which can be identified without difficulty.

As the first application, we determine the optimal metrics for the first positive curl eigenvalue in a conformal class on the sphere.

\begin{theorem}[Optimal metrics for the curl eigenvalue]\label{thm:optimal-metrics-intro}
Let $(\S^n,g_{{\rm st}})$ be the standard round sphere, and let
\[
    \mu([g_{{\rm st}}])\coloneqq \inf_{\tilde g\in[g_{{\rm st}}]} \lambda_1^+(\tilde g)\,\rVol(\tilde g)^{\frac{1}{n}},
\]
where $\lambda_1^+(\tilde g)$ denotes the first positive eigenvalue of the curl operator with respect to $\tilde g$. Then the round metric is the unique optimizer for $\mu([g_{{\rm st}}])$.
\end{theorem}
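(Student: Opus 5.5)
The plan is to derive Theorem~\ref{thm:optimal-metrics-intro} from Theorem~\ref{main_thm} via a variational characterization of $\lambda_1^+$ and the conformal invariance of $J$.

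\textbf{Step 1: variational characterization.} For any metric $\tilde g$ on $\S^n$, let $\alpha$ be an eigenform for $\lambda_1^+(\tilde g)$, i.e.\ $\curl_{\tilde g}\alpha=\lambda_1^+\alpha$ with $\alpha$ coclosed. Then $\int\<\curl\alpha,\alpha\>_{\tilde g}\,\rdV_{\tilde g}=\lambda_1^+\int|\alpha|^2_{\tilde g}>0$. By H\"older with exponents $\frac{n+1}{n}$ and $n+1$,
\eq{
\Bigl(\int|\curl\alpha|_{\tilde g}^{\frac{2n}{n+1}}\,\rdV_{\tilde g}\Bigr)^{\frac{n+1}{n}}\le \int|\curl\alpha|^2_{\tilde g}\,\rdV_{\tilde g}\;\rVol(\tilde g)^{\frac1n}.
}
Hence, using $\int|\curl\alpha|^2=(\lambda_1^+)^2\int|\alpha|^2$,
\eq{
J_{\tilde g}(\alpha)\le \frac{(\lambda_1^+)^2\int|\alpha|^2}{\lambda_1^+\int|\alpha|^2}\rVol(\tilde g)^{\frac1n}=\lambda_1^+(\tilde g)\rVol(\tilde g)^{\frac1n}.
}

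\textbf{Step 2: conformal invariance.} The quotient $J$ is conformally invariant (Subsection~\ref{sec2.2}): for $\tilde g=e^{2u}g_{\rm st}$ and middle-degree $\frac{n-1}{2}$-forms, $\rd$ is metric-independent, $\<\curl\alpha,\alpha\>\rdV=\rd\alpha\wedge\alpha$ is metric-independent, and $|\rd\alpha|_{\tilde g}^{\frac{2n}{n+1}}\rdV_{\tilde g}=|\rd\alpha|_{g}^{\frac{2n}{n+1}}\rdV_g$ because $\frac{n+1}{2}$-forms scale by $e^{-(n+1)u}$ in norm. Thus $J_{\tilde g}(\alpha)=J_{g_{\rm st}}(\alpha)\ge\frac{n+1}{2}\omega_n^{1/n}$ by Theorem~\ref{main_thm}. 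For the round metric, the first positive curl eigenvalue on $\frac{n-1}{2}$-forms is $\frac{n+1}{2}$, attained by Killing forms. Since $\rVol=\omega_n$, we get $\lambda_1^+(g_{\rm st})\omega_n^{1/n}=\frac{n+1}{2}\omega_n^{1/n}$, so $\mu([g_{\rm st}])=\frac{n+1}{2}\omega_n^{1/n}$ is attained by $g_{\rm st}$. I will check this spectral fact via the known spectrum of $\ast\rd$ on coclosed middle forms.

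\textbf{Step 3: uniqueness, the main obstacle.} If $\tilde g=e^{2u}g_{\rm st}$ attains equality, both inequalities above are equalities. Equality in H\"older forces $|\curl\alpha|_{\tilde g}=\lambda_1^+|\alpha|_{\tilde g}$ to be constant. Equality in Theorem~\ref{main_thm} forces $\alpha=\Phi^*\beta+\gamma$, where $\beta$ is a positive Killing form for $g_{\rm st}$, $\Phi$ is conformal and $\gamma\in\ker\rd$. Since $\curl_{\tilde g}\alpha=\lambda_1^+\alpha$ and $\rd\alpha=\rd\Phi^*\beta$, we get $\alpha=\lambda^{-1}\ast_{\tilde g}\rd\Phi^*\beta$. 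Killing forms have constant $g_{\rm st}$-norm, so $|\rd\beta|_{g_{\rm st}}$ is constant. Comparing norms, $|\rd\Phi^*\beta|_{\tilde g}=e^{-(n+1)u}|\Phi|'^{\,(n+1)}\cdot c$ is constant, which forces $e^{u}=c'|\Phi'|$, where $|\Phi'|$ is the conformal factor of $\Phi$. Hence $\tilde g=c'^2\Phi^*g_{\rm st}$, i.e.\ $\tilde g$ is isometric to a round metric up to scaling and a conformal diffeomorphism. This is the sense of uniqueness intended. The delicate points are justifying that the eigenform is regular enough to apply the equality characterization, and handling the $\ker\rd$ ambiguity. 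For the latter I will use that $\alpha$ is $\tilde g$-coexact, so it is determined by $\rd\alpha$.
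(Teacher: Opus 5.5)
Your proposal is correct and follows the paper's route. The eigenform-plus-H\"older bound is exactly the easy half of Lemma~\ref{lem6.1}, which is the only half this statement needs. You combine it with the conformal invariance of $J$, Theorem~\ref{main_thm} and its equality case, and you spell out the uniqueness step in more detail than the paper does, correctly reading uniqueness as modulo scaling and conformal diffeomorphisms.

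One small slip: the pointwise norm of a $\frac{n+1}{2}$-form scales by $e^{-\frac{n+1}{2}u}$, and its squared norm by $e^{-(n+1)u}$. This is what makes $|\rd\alpha|^{\frac{2n}{n+1}}\rdV$ invariant, and it does not affect your conclusion $e^u=c'|\Phi'|$.
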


This is a Hersch-type (or El Soufi--Ilias type) theorem for the curl operator and is in fact equivalent to Theorem~\ref{main_thm}; see, for example, \cite{EGP25}. We give a detailed explanation and a proof in Section~\ref{sec0}. Theorem~\ref{thm:optimal-metrics-intro} is also a key ingredient in the proof of Theorem~\ref{thm:magnetic} below.

As the second application, we obtain an optimality result for the $3$-energy in the Hopf homotopy class.

\begin{theorem}[Hopf map minimizes the $3$-energy]\label{thm:hopf-3energy-intro}
The minimizers of the $3$-energy
\[
    \int_{\S^3}\abs{\rd u}^3,\quad u:\S^3\to\S^2,
\]
in the homotopy class of the Hopf map $\pi$ are precisely $\pi\circ\Phi$, where $\Phi\in{\rm Conf}^+(\S^3)$ is an orientation-preserving conformal transformation.
\end{theorem}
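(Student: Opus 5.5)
The plan is to reduce the $3$-energy to the curl--Sobolev quotient of Corollary~\ref{thm_form_3D} through the Whitehead integral formula for the Hopf invariant. Let $u:\S^3\to\S^2$ lie in the Hopf class, and let $\sigma$ be the area form of $\S^2$, normalized so that $\int_{\S^2}\sigma=4\pi$. The pullback $u^*\sigma$ is a closed $2$-form on $\S^3$. Since $H^2(\S^3)=0$, we can write $u^*\sigma=\rd\alpha$ for a $1$-form $\alpha$, that is, $\curl\alpha=\ast u^*\sigma$.

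The first step uses Whitehead's formula. It gives $H(u)=\frac{1}{16\pi^2}\int_{\S^3}\alpha\w\rd\alpha=\frac{1}{16\pi^2}\int_{\S^3}\<\curl\alpha,\alpha\>\,\rdV$. For the Hopf class $H(u)=1$, so this integral equals $16\pi^2$, which is positive. The second step is a pointwise bound. Because $\rd u$ has rank at most $2$, $\abs{u^*\sigma}=\lambda_1\lambda_2\le\frac12\abs{\rd u}^2$, where $\lambda_1,\lambda_2$ are the singular values of $\rd u$. Equality holds exactly where $\rd u$ is conformal on the horizontal complement of its kernel, i.e. where $u$ is horizontally weakly conformal there. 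It follows that
\[
\int_{\S^3}\abs{\rd u}^3\ \ge\ 2^{3/2}\int_{\S^3}\abs{\curl\alpha}^{3/2}.
\]
The third step applies Corollary~\ref{thm_form_3D}, which gives $\bigl(\int\abs{\curl\alpha}^{3/2}\bigr)^{4/3}\ge 2\omega_3^{1/3}\cdot16\pi^2$. Combining the two bounds yields an explicit lower bound for $\int\abs{\rd u}^3$. I then check that the Hopf map attains it. Indeed, $\pi$ is horizontally conformal with constant dilation. It is also a Riemannian submersion up to a factor, so $\abs{\rd\pi}$ is constant. Finally, $\alpha$ can be taken as a constant multiple of the Hopf Killing $1$-form, which is an extremal of the corollary. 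So the bound is sharp and $\pi$ minimizes. Since $\abs{\rd(\pi\circ\Phi)}^3\rdV$ is invariant under conformal $\Phi$ in dimension $3$, and $\Phi\in{\rm Conf}^+$ preserves degree and the Hopf invariant, every $\pi\circ\Phi$ is also a minimizer.

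The main obstacle is the converse: showing that every minimizer $u$ has the form $\pi\circ\Phi$. Equality forces two things. First, $u$ is horizontally weakly conformal almost everywhere. Second, $\alpha$ is extremal in the corollary. Hence, after an orientation-preserving conformal map $\Phi$ and modulo $\ker(\rd)$, $\alpha\circ\Phi^{-1}$ is a positive Killing $1$-form, and therefore $u^*\sigma=\rd\alpha$ equals $\Phi^*$ of a multiple of $\rd\eta$ for a Killing form $\eta$. The extremality must be consistent with the Hopf invariant, and this forces the Killing form to be of Hopf type with a unit-speed flow. Composing with $\Phi^{-1}$, we may assume $u^*\sigma=c\,\rd\eta$, where $\eta$ is dual to a Hopf Killing field $\xi$. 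Then $\xi\ip u^*\sigma=0$, so $\rd u(\xi)=0$ wherever $\rd u\neq0$. Thus $u$ is constant along the Hopf fibres and factors as $u=\psi\circ\pi$ with $\psi:\S^2\to\S^2$. We also have $\pi^*\psi^*\sigma=u^*\sigma=c\,\pi^*\sigma$ and $\pi^*$ is injective on $2$-forms, so $\psi^*\sigma=c\,\sigma$ with $c$ constant. Hence $\psi$ is a local isometry up to scale and has degree $1$ by the Hopf invariant, so $c=1$ and $\psi\in{\rm SO}(3)$. Since rotations of $\S^2$ lift to isometries of $\S^3$ that commute with the Hopf fibration, $u=\pi\circ R$ with $R\in{\rm SO}(4)$, and therefore $u=\pi\circ\Phi$ in the original coordinates. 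I expect the regularity needed to make this argument rigorous in $W^{1,3}$ to be the delicate point.
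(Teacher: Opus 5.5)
Your argument follows the paper's proof essentially step for step. The shared steps are the pointwise bound $\abs{\rd u}^2\ge 2\abs{\rd\alpha}$, the sharp curl--Sobolev inequality at fixed Hopf invariant, and, in the equality case, factoring $u$ through the Hopf fibration and lifting an isometry of $\S^2$ to $\mathrm{SO}(4)$.

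There are two small corrections. First, $\psi^*\sigma=c\,\sigma$ alone only makes $\psi$ area-preserving up to scale, not a local isometry. You need to invoke the horizontal weak conformality of $u$, which you already derived and which the paper uses at exactly this point, to conclude that $\psi$ is conformal and hence a homothety. Second, the Hopf invariant only gives $(\deg\psi)^2=1$, not $\deg\psi=1$. This does not affect your conclusion, because orientation-reversing isometries of $\S^2$ also lift to $\mathrm{SO}(4)$; for example, $(z_1,z_2)\mapsto(\bar z_1,\bar z_2)$ realizes a reflection.
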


Using the results mentioned above, Rivi\`ere~\cite{Riviere98CAG} proved that the Hopf map $\pi$ minimizes the $p$-energy
\eq{
\int_{\S^3} \abs{\rd u}^p, \quad u:\S^3 \to \S^2,
}
in its homotopy class for $p\ge 4$, and that it is a local minimizer for $p\ge 3$. He conjectured that $\pi$ is a global minimizer of the $3$-energy, and provided evidence for this among the class of symmetric fibrations introduced in \cite{Riviere98CAG}. 
Since the $3$-energy is conformally invariant in dimension $3$, the conjecture is reminiscent of the Willmore conjecture proved by Marques--Neves~\cite{MarquesNevesWillmore2014}. Motivated by this analogy, Rivi\`ere~\cite{Riviere23} attempted to apply the min--max method of Marques--Neves to the $3$-energy and classified harmonic maps from $\S^3$ to $\S^2$ with index 4.

It was already observed in \cite{Riviere98CAG} that Theorem~\ref{main_thm} implies Theorem~\ref{thm:hopf-3energy-intro}.
In Section~\ref{app:3-energy} we revisit Rivi\`ere's approach and reduce the proof of Theorem~\ref{thm:hopf-3energy-intro} to Theorem~\ref{main_thm}.

As the third application, we settle the global minimality problem for the Hopf map in the Faddeev--Skyrme model.

\begin{theorem}[Faddeev--Skyrme minimizers in the Hopf class]\label{thm:fs-intro}
For any $\rho\le \sqrt{2}$, the minimizers of the Faddeev--Skyrme energy
\eq{\label{FS}
    \mathcal{FS}_\rho(u) \coloneqq \int_{\S^3} \abs{\rd u}^2 + \frac{1}{4\rho^2}\int_{\S^3} \abs{\rd u\w\rd u}^2, \quad u:\S^3\to \S^2,
}
in the homotopy class of the Hopf map $\pi$ are precisely $\pi\circ R$, where $R\in{\rm SO}(4)$.
\end{theorem}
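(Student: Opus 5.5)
The plan is to reduce $\mathcal{FS}_\rho$ to two scalar quantities of the pulled-back area form and to bound these quantities from below using Theorem~\ref{main_thm} with $n=3$ (in the vector form of Corollary~\ref{thm_form_3D}), together with H\"older's inequality. Fix $u$ in the Hopf class and let $F\coloneqq u^*\omega_{\S^2}$. This is a closed $2$-form on $\S^3$, so $F=\rd A$ for some $1$-form $A$, and $\curl A=\ast F$. By Whitehead's formula the Hopf invariant is $\int_{\S^3}\<\curl A,A\>\,\rdV=\int A\w F=c_0\,H(u)$ for a universal constant $c_0>0$, with $H(u)=1$ in our class (we fix the orientation so that this is positive). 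We use the pointwise facts that $\rd u$ has rank at most $2$ and singular values $s_1,s_2$:
\[
\abs{\rd u}^2=s_1^2+s_2^2\ge 2s_1s_2=2\abs{F},\qquad \abs{\rd u\w\rd u}=2\abs{F}
\]
(the exact constant here will be fixed by the normalization of $\abs{\rd u\w\rd u}$). Setting $a\coloneqq\int\abs{F}$ and $b\coloneqq\int\abs{F}^2$, this gives
\[
\mathcal{FS}_\rho(u)\ge 2a+\rho^{-2}b .
\]
All the inequalities used become equalities for the Hopf map $\pi$, because $\pi$ is horizontally conformal and $\abs{\pi^*\omega_{\S^2}}$ is constant.

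Next we derive two constraints on $(a,b)$.
\begin{enumerate}
\item From Theorem~\ref{main_thm} combined with H\"older's inequality $\int\abs{F}^{3/2}\le a^{1/2}b^{1/2}$, we get
\[
(ab)^{2/3}\ge\Big(\int\abs{F}^{3/2}\Big)^{4/3}\ge 2\omega_3^{1/3}c_0,
\]
that is, $ab\ge K^2$ for an explicit constant $K$.
\item From Cauchy--Schwarz we get $b\ge a^2/\omega_3$.
\end{enumerate}
The problem is then to minimize $f(a,b)=2a+\rho^{-2}b$ over the region cut out by these two constraints. Since $f$ is increasing in $b$, the minimum lies on the lower boundary $b=\max(K^2/a,\,a^2/\omega_3)$. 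The two curves meet at $a_H=(K^2\omega_3)^{1/3}$, and this point is exactly the value of $a$ attained by $\pi$, since $\pi$ is extremal in every inequality.
\begin{enumerate}
\item For $a\ge a_H$ the boundary is $b=a^2/\omega_3$, along which $f$ is increasing.
\item For $a\le a_H$ the boundary is $b=K^2/a$, and $f(a)=2a+K^2/(\rho^2 a)$ is decreasing up to $a_*=K/(\rho\sqrt2)$.
\end{enumerate}
Hence the minimum is attained at $a_H$ exactly when $a_*\ge a_H$. Checking the normalizations, this condition should be precisely $\rho\le\sqrt2$. The conclusion would be $\mathcal{FS}_\rho(u)\ge\mathcal{FS}_\rho(\pi)$, with equality forcing $a=a_H$ and $b=K^2/a_H$.

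In the equality case every inequality in the chain is an equality:
\begin{enumerate}
\item $u$ is horizontally weakly conformal, i.e.\ $s_1=s_2$;
\item $\abs{F}$ is constant;
\item $A$ is an extremal of Theorem~\ref{main_thm}, i.e.\ $A=\Phi^*\kappa+\text{closed}$ for a positive Killing $1$-form $\kappa$ and some $\Phi\in{\rm Conf}^+(\S^3)$.
\end{enumerate}
Then $F=\rd A=\Phi^*(\rd\kappa)$, and $\abs{\Phi^*\rd\kappa}$ is constant only if the conformal factor of $\Phi$ is constant, so $\Phi\in{\rm SO}(4)$. Replacing $u$ by $u\circ\Phi^{-1}$, we may assume $F=\rd\kappa$, which is the pullback of the area form under the Hopf fibration associated with $\kappa$. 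It remains to show that a horizontally conformal map $u$ with $u^*\omega_{\S^2}=\pi^*\omega_{\S^2}$ satisfies $u=\sigma\circ\pi$ for some $\sigma\in{\rm SO}(3)$. The kernel of $\rd u$ equals the Hopf fibre direction, so $u$ is constant on fibres and factors as $u=\sigma\circ\pi$. Since $\sigma^*\omega_{\S^2}=\omega_{\S^2}$ and $\sigma$ is conformal, $\sigma$ is an isometry, and it can be absorbed into $R$.

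I expect this rigidity step to be the main obstacle. It requires passing from the Sobolev-level equality in Theorem~\ref{main_thm} (which determines $A$ only modulo $\ker\rd$) to pointwise structure of $u$, with enough regularity to argue fibrewise. I would first try to exploit that $F$ is then smooth and nowhere vanishing, so that $\ker\rd u$ is a smooth line field equal to the Hopf direction. A secondary point is checking that the constants really produce the threshold $\sqrt2$.
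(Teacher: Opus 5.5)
Your argument is correct and is essentially the paper's proof. It uses the same reduction $\mathcal{FS}_\rho(u)\ge 2\int\abs{F}+\rho^{-2}\int\abs{F}^2$ via $\abs{\rd u}^2\ge 2\abs{F}$, the same input from Theorem~\ref{main_thm}, and the same rigidity step: extremals of Theorem~\ref{main_thm}, constancy of $\abs{F}$, and the fibre-factorization argument from the $3$-energy proof. Your numbers do check out: $K=8\omega_3$, $a_H=4\omega_3$, and $a_*\ge a_H$ holds exactly when $\rho\le\sqrt2$. The only difference is in how the two terms are combined: instead of your two-variable optimization, the paper settles $\rho=\sqrt2$ with the pointwise AM--GM bound $2x+\tfrac12x^2\ge 2x^{3/2}$. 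For $\rho<\sqrt2$ it then adds $(\rho^{-2}-\tfrac12)\int\abs{F}^2\ge(\rho^{-2}-\tfrac12)\,16\omega_3$.
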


It is known that the Hopf map $\pi$ is unstable for $\rho>\sqrt{2}$ \cite{Ward99}, whereas it is linearly stable for $0<\rho\le \sqrt{2}$; see \cite{Isobe08,SpeightSvensson07}. It is therefore natural to conjecture that, throughout the stable regime, $\pi$ is the unique minimizer of \eqref{FS} within its homotopy class, up to isometries of $\S^3$. Very recently, Guerra--Lamy--Zemas~\cite{GLZ26} verified this conjecture for $\rho\le 1$. The implication ``Theorem~\ref{main_thm} $\Rightarrow$ Theorem~\ref{thm:fs-intro}'' was also known to experts.

As the fourth application, we establish a sharp lower bound on the magnetic field in the presence of a Dirac zero mode.

\begin{theorem}\label{thm:magnetic}
Let $\varphi$ be a non-trivial zero mode on $\S^3$, i.e. a non-zero spinor satisfying
\eq{\label{eq1.0}
    \D\varphi = iA\cdot\varphi,
}
where $\D$ denotes the Dirac operator, $i$ is the imaginary unit, and $A$ is a vector field. Then
\eq{\label{eq:55}
    \norm{\curl A}_{\frac{3}{2}} \geq 3\,\omega_3^{\frac{2}{3}},
}
with equality if and only if, modulo conformal and gauge transformations, $\varphi$ is a Killing spinor and $A$ is a (real) multiple of the Reeb field associated with $\varphi$.
\end{theorem}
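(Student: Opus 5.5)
We derive the inequality \eqref{eq:55} from Theorem~\ref{thm:optimal-metrics-intro}, using the conformal covariance of the Dirac operator and the zero mode $\varphi$ to build a conformal metric adapted to $A$. The model is the Frank--Loss argument on $\R^3$. Since $\varphi\neq 0$ solves $\D\varphi=iA\cdot\varphi$, unique continuation gives that its zero set has measure zero. Set $\tilde g=\abs{\varphi}^{4}g_{\rm st}$ (weight chosen so that $\tilde\varphi=\abs{\varphi}^{-2}\varphi$ has unit length). By conformal covariance, $\tilde\varphi$ solves $\tilde\D\tilde\varphi=i\tilde A\cdot\tilde\varphi$ for the appropriately rescaled $\tilde A$. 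We will have to justify this singular conformal change near the zeros of $\varphi$ by approximation, using the finite $L^{3/2}$ assumption.

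\textbf{Step 1: the Reeb field.} For a unit spinor in dimension $3$, the equation forces $A$ to be determined by $\tilde\varphi$. Following Loss--Yau and Frank--Loss, write $\tilde A = \abs{\tilde\varphi}^{-2}\bigl(\tfrac12\nabla\abs{\tilde\varphi}^2\times\ldots\bigr)$. In the unit-length gauge, the vector field $\xi$ dual to $\langle e_j\cdot\tilde\varphi,\tilde\varphi\rangle$ (the Reeb field) satisfies, after applying the Lichnerowicz/Weitzenböck identity $\tilde\D^2=\nabla^*\nabla+\tfrac{\tilde R}{4}$ and integrating,
\[
\int \<\curl\tilde A,\tilde A\>_{\tilde g}\,{\rm dV}_{\tilde g}\ \ge\ \ldots
\]
The key pointwise relation I expect is $\curl_{\tilde g}\xi=2\lambda\xi$, with $\lambda$ related to $\abs{\tilde A}$. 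Equivalently, $\xi$ is a contact-type field whose curl compares to $\curl\tilde A$. Concretely, the aim is a test form $\alpha$ (the $1$-form dual to $\xi$ or to $\tilde A$) satisfying
\[
\int\<\curl\alpha,\alpha\>\,\rdV>0
\]
and
\[
\norm{\curl A}_{3/2}\ \ge\ \tfrac32\,J(\alpha)\cdot(\text{explicit factor}).
\]
This is how the constant $3\omega_3^{2/3}=\tfrac32\cdot 2\omega_3^{2/3}$ should arise from $J\ge 2\omega_3^{1/3}$ in Corollary~\ref{thm_form_3D}.

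\textbf{Step 2: apply the sharp bound.} I would use the variational characterization $\lambda_1^+(\tilde g)\le \inf_\alpha \frac{\int\abs{\curl\alpha}^2}{\int\<\curl\alpha,\alpha\>}$ together with Theorem~\ref{thm:optimal-metrics-intro}, which gives $\lambda_1^+(\tilde g)\rVol(\tilde g)^{1/3}\ge \lambda_1^+(g_{\rm st})\omega_3^{1/3}=2\omega_3^{1/3}$. The Reeb field in the unit gauge gives an upper bound for $\lambda_1^+(\tilde g)$ of the form $c\,\norm{\curl \tilde A}_{L^{3/2}(\tilde g)}\rVol(\tilde g)^{-1/3}$, via Hölder with exponents $3/2$ and $3$. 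The $L^{3/2}$ norm of a curl $2$-form is conformally invariant in dimension $3$, so it equals $\norm{\curl A}_{3/2}$. Combining the upper and lower bounds yields \eqref{eq:55}.

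\textbf{Step 3: equality.} Equality forces equality in Theorem~\ref{thm:optimal-metrics-intro}, so $\tilde g$ is round up to a conformal map. It also forces equality in Hölder, so $\abs{\curl\tilde A}$ is constant, and $\xi$ is a first curl eigenfield, hence Killing. Pulling back by the conformal map, $\tilde\varphi$ becomes a unit spinor with $\D\tilde\varphi$ proportional to Clifford multiplication by its own Killing Reeb field, which characterizes Killing spinors on $\S^3$. The converse is a direct computation: for a Killing spinor $\varphi$ and $A=c\,\xi$, one has $\curl A=2cA$ with $\abs{A}$ constant.

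The main obstacle is Step 1. We need an identity, exact in the equality case, linking $\norm{\curl A}_{3/2}$ to a curl Rayleigh quotient in the metric $\tilde g$, while handling the zeros of $\varphi$ and fixing the precise constants. I would first try to reproduce the Frank--Loss computation relating $\curl A$ to $\abs{\varphi}^{-2}$ times the curvature of the spinor frame, transplanted to $\S^3$.
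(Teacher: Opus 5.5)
Your outline has the right frame: pass to the conformal metric in which the zero mode has unit length, use the Reeb field as test form, apply Theorem~\ref{thm:optimal-metrics-intro}, and close with H\"older. However, Step~1, which you leave as an expectation, is the whole proof, and the form you anticipate for it does not hold.

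The missing ingredient is a Kato-type \emph{identity} special to dimension $3$. Since $X\mapsto X\cdot\phi$ is an isometry onto $\phi^\perp$ (Lemma~\ref{lem8.2}), with $\phi=\varphi/\abs{\varphi}$ and $f=\abs{\varphi}$ one can write $\nabla^A_X\phi=K(X)\cdot\phi$. The zero-mode equation forces ${\rm tr}K=0$ and fixes the antisymmetric part of $K$ in terms of $\nabla\log f$. This yields $\abs{\nabla^A\varphi}^2=\frac32\abs{\nabla f}^2+f^2\abs{S}^2$, where $S$ is the symmetric trace-free part of $K$. For $\alpha=f\xi^\flat$ one then computes, in a frame $\{e_1,e_2,e_3=\xi\}$, that $\curl\alpha=2f(S_{13},S_{23},S_{33})$, hence $f^2\abs{S}^2\ge\frac38\abs{\rd\alpha}^2$ (Lemma~\ref{lem8.9}). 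In particular, your expected pointwise relation $\curl\xi=2\lambda\xi$ fails unless $S_{13}=S_{23}=0$. Inserting the identity into the Schr\"odinger--Lichnerowicz formula and using H\"older gives $\norm{f}_3\norm{\curl A}_{\frac32}\ge 2\int_{\S^3}(\abs{\nabla u}^2+\frac34u^2)+\int_{\S^3}f\abs{S}^2$ with $f=u^2$. In the unit gauge $\tilde g=f^2g_{\rm st}$ this reads $\norm{\curl A}_{\frac32}\rVol(\tilde g)^{\frac13}\ge\frac14\int\tilde R\,\rdV_{\tilde g}+\int\abs{\tilde S}_{\tilde g}^2\,\rdV_{\tilde g}$.

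This also shows your accounting of the constant is off. It is not $\frac32\cdot 2\omega_3^{2/3}$ coming from $J\ge 2\omega_3^{1/3}$, but $3\omega_3^{2/3}=\frac32\omega_3^{2/3}+\frac38\cdot 4\omega_3^{2/3}$. The first half comes from the sharp Sobolev inequality \eqref{eq:S} applied to the $u$-term (equivalently, the Yamabe bound for $\int\tilde R$); this alone is the Frank--Loss bound. Only the second half comes from the curl eigenvalue bound applied to $\int f\abs{S}^2$, and the curl bound by itself gives only $\frac32\omega_3^{2/3}$. Any bound $\lambda_1(\tilde g)\rVol(\tilde g)^{1/3}\le c\norm{\curl A}_{3/2}$ that is sharp on the Killing spinor must have $c=\frac23\omega_3^{-1/3}$, which already signals that a second sharp inequality must enter.

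Moreover, the quotient you propose, $\int\abs{\curl\alpha}^2/\int\<\curl\alpha,\alpha\>$, cannot be bounded above, because you have no lower bound on the helicity of the Reeb form. The paper instead uses $\int\abs{\rd\alpha}^2_{\tilde g}\rdV_{\tilde g}\ge\min\{\lambda_1^+(\tilde g),\abs{\lambda_1^-(\tilde g)}\}^2\int\abs{\alpha}^2_{\tilde g}\rdV_{\tilde g}$, where $\int\abs{\alpha}^2_{\tilde g}\rdV_{\tilde g}=\rVol(\tilde g)$. This needs two things:
\begin{enumerate}
\item $\alpha$ must be $\tilde g$-coclosed, which follows from the zero-mode equation via $\rd^*(f\alpha)=\rd^*(\abs{\varphi}^2\xi^\flat)=0$;
\item the lower bound must hold for both $\lambda_1^+$ and $\abs{\lambda_1^-}$ (Remark~\ref{negative}).
\end{enumerate}

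Finally, a smaller error: in dimension $3$ spinors carry conformal weight $\rho^{-1}$, so unit length requires $\tilde g=\abs{\varphi}^2g_{\rm st}$, not $\abs{\varphi}^4g_{\rm st}$. Also, $\abs{\varphi}^{-2}\varphi$ does not have unit length.
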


The zero mode equation \eqref{eq1.0} arises naturally in quantum electrodynamics (QED), where it describes zero-energy states of charged fermions coupled to an external magnetic field.
From a geometric viewpoint, \eqref{eq1.0} is a Dirac equation with vector potential $A$. The associated magnetic field is $\curl A$, which can be interpreted as the curvature of the connection $\rd+iA$.
Frank--Loss established $\norm{\curl A}_{3/2}\ge \tfrac{3}{2}\,\omega_3^{2/3}$ and raised the question ``What magnetic fields support a zero mode?" in \cite{FL1}.
Theorem~\ref{thm:magnetic} answers this question by proving the sharp lower bound $\norm{\curl A}_{3/2} \ge 3\,\omega_3^{2/3}$ and providing a complete characterization of the equality case. In particular, it yields a sharp criterion for the existence of zero modes. Another sharp criterion was proved by Frank--Loss in \cite{Frank_Loss_2024}.
We refer to \cites{Reuss25,WZ25b,Z26} for further results in this direction. For precise regularity assumptions on $\varphi$ and $\curl A$, see \cite{FL1}. The same question was also asked for $n\ge 3$, and we expect that our strategy extends to that setting.

For further applications in mathematical physics, see Appendix~B.

\medskip

\noindent\textit{Idea of the proof of Theorem~\ref{main_thm}.}
Let $\alpha$ be a minimizer for the quotient $J(\alpha)$. By combining the Euler--Lagrange equation with the scaling invariance of $J$, one reduces to the normalized nonlinear eigenvalue problem
\eq{\label{eq:main}
    \curl\alpha=\frac{n+1}{2}\abs{\alpha}^{\frac{2}{n-1}}\alpha.}
Writing $\alpha=f\beta$, where $f=\abs{\alpha}$ and $\abs{\beta}\equiv 1$, one decomposes the covariant derivative $\nabla\beta$ into three explicitly constructed, pointwise orthogonal components,
\[
    \nabla\beta=P+Q+S
\]
(Subsection~\ref{subsec:decomp-nablabeta}). This decomposition isolates the first-order contributions arising from $\nabla(\log f)$ from the zeroth-order terms depending only on $f$, and it yields a sharp Kato-type inequality which bounds $\abs{\nabla\alpha}^2$ from below in terms of $\abs{\nabla f}^2$ and $f^{\frac{2(n+1)}{n-1}}$ (Subsection~\ref{sec4.2}).

Combining the resulting pointwise estimate with the Bochner--Weitzenb\"ock formula on $\S^n$ and the Euler--Lagrange equation leads to the key integral inequality for $f$:
\eq{\label{eq1}
    \frac{(n-1)^2}{4} \int_{\S^n} f^{\frac{2(n+1)}{n-1}} \geq \int_{\S^n}\abs{\nabla f}^2 + \frac{(n-1)^2}{4}\int_{\S^n} f^2.
}
Accordingly, the proof reduces to a sharp scalar inequality, namely the spherical Gagliardo--Nirenberg inequality (Theorem~\ref{thm:GN-Sn}):
\eq{\label{eq2}
    \frac{(n-1)^2}{4}\omega_n^{\frac{2}{n}} \int_{\S^n} \abs{u}^{\frac{2(n+1)}{n-1}}
    \leq \Big( \int_{\S^n} \abs{u}^{\frac{2n}{n-1}} \Big)^{\frac{2}{n}} \int_{\S^n} \Big( \abs{\nabla u}^2 + \frac{(n-1)^2}{4}\abs{u}^2 \Big),
}
valid for every $u$ such that $\nabla u\in L^2$ and $u\in L^{\frac{2(n+1)}{n-1}}$. Combining \eqref{eq2} with \eqref{eq1} yields the sharp estimate $\int_{\S^n}f^{\frac{2n}{n-1}}\ge \omega_n$, which is equivalent to the desired curl--Sobolev inequality.

We briefly indicate the proof of \eqref{eq2}. The starting point is the sharp Euclidean Gagliardo--Nirenberg inequality of Del~Pino--Dolbeault~\cite{DelPino02}; see \eqref{eq:GN-Rn}. Via stereographic projection $\S^n\setminus\{N\}\to\R^n$, one transfers a function $f$ on $\S^n$ to a function $u$ on $\R^n$ with conformal weight $\tfrac{n-1}{2}$,
\eq{
    u(x) = \Big(\frac{2}{1+\abs{x}^2}\Big)^{\frac{n-1}{2}}f(\Psi(x)).
}
Under this correspondence, \eqref{eq2} becomes the weighted inequality \eqref{eq:GN-weighted}. One then applies the Euclidean sharp inequality to the Kelvin transform $\tilde u(x)=\abs{x}^{-(n-1)}u(x/\abs{x}^2)$, thereby obtaining a weighted sharp inequality \eqref{eq:7} for $u$. Averaging \eqref{eq:7} with \eqref{eq:GN-Rn} yields \eqref{eq:GN-weighted}. The invariance of the $L^{\frac{2n}{n-1}}$-norm under these conformal transformations is a crucial input.

Finally, equality in the curl--Sobolev inequality forces simultaneous equality in both the Kato-type inequality and the spherical Gagliardo--Nirenberg inequality of Theorem~\ref{thm:GN-Sn}. It follows that, modulo conformal transformations, $f$ is constant and the remainder term $S$ vanishes; consequently, $\alpha$ is a positive Killing form.

The method also yields alternative proofs of the spinorial Sobolev inequality~\cite{A03} and of the sharp lower bound for $\norm{A}_3$ in \cite{Frank_Loss_2024}, by using the subcritical Sobolev inequality \eqref{eq2} instead of the critical inequality \eqref{eq:S}. We omit the details.

As mentioned above, Applications~1--3 are either equivalent to Theorem~\ref{main_thm} (Application~1) or follow immediately from it (Applications~2 and~3). The proof of Application~4 requires additional input. While it follows a similar strategy, it also exploits a phenomenon specific to dimension~$3$ (Lemma~\ref{lem8.2}) in order to derive the Kato-type identity \eqref{eq:62}. One then combines the Schr\"odinger--Lichnerowicz formula, the critical Sobolev inequality \eqref{eq:74}, and Theorem~\ref{thm:optimal-metrics-intro} to conclude. In particular, the proof of Theorem~\ref{thm:magnetic} uses both the critical inequality \eqref{eq:74} and the subcritical inequality \eqref{eq2}.

Killing vector fields (or, more generally, Killing forms) need not minimize every conformally invariant functional. For instance, there exists a further Sobolev-type inequality associated with the curl operator on differential forms: there is a constant $S_2>0$ such that
\[
I(\alpha)=\frac{\big(\int|\curl\alpha|^{\frac{2n}{n+1}}\,\rdV\big)^{\frac{n+1}{n}}}{\inf_{\phi}\big(\int|\alpha-\rd\phi|^{\frac{2n}{n-1}}\,\rdV\big)^{\frac{n-1}{n}}}\ge S_2,
\]
for $\alpha\in \Omega^{\frac{n-1}2}(\S^n)$. The functional $I$ shares many features with $J$: it is conformally invariant, it admits minimizers \cite{FL22}, and its distinguished critical points are Killing forms. It is therefore natural to conjecture that Killing forms minimize $I$. If this were the case, then $S_2=\frac{(n+1)^2}{4}\,\omega_n^{\frac2n}$, as conjectured in \cite{Frank_Loss_2024}; see also \cites{FL22,FL1,MS25}. In contrast to Theorem~\ref{main_thm}, this conjecture is false; see \cite{WaZh26}. Determining the optimal constant $S_2$ remains open.

We conclude the introduction by formulating a Yamabe-type problem for differential forms. Let $n\equiv 3\pmod 4$ and let $(M,g)$ be a closed $n$-dimensional manifold. For an $\frac{n-1}{2}$-form $\alpha$, define
\eq{ J(\alpha) \coloneqq \frac{ \Big(\int_{M} \abs{\curl\alpha}^{\frac{2n}{n+1}} \,\rdV \Big)^{\frac{n+1}{n}} }{ \int_{M} \< \curl\alpha, \alpha\> \,\rdV},}
where $\rdV=\rdV(g)$ is the volume form of $g$. One may ask whether $J$ admits critical points and, in particular, whether it admits a minimizer.
Define the Yamabe-type constant
\eq{
    Y_{\frac {n-1}2}(M,[g]) :=\inf _{\int _M  \< \curl \alpha, \alpha\>>0}J(\alpha),
}
which is conformally invariant. A standard cut-off argument yields
\eq{\label{Y}
    Y_{\frac {n-1} 2}(M,[g]) \le Y_{\frac {n-1}2}(\S^n).
}
Moreover, blow-up analysis, combined with Theorem~\ref{main_thm}, shows that $Y_{\frac {n-1} 2}(M,[g])$ is achieved whenever the inequality in \eqref{Y} is strict. The remaining issue in this Yamabe problem is therefore the following:

\smallskip

{\it Does the strict inequality in \eqref{Y} hold for every manifold that is not conformally equivalent to the standard round sphere?}

\medskip

\noindent\textit{Organization of the paper.} Section~\ref{sec:prelim} collects notation and background material. In Section~\ref{sec:3} we prove the sharp spherical Gagliardo--Nirenberg inequality, and in Section~\ref{sec:main} we combine these ingredients to complete the proof of Theorem~\ref{main_thm}. The final part of the paper is devoted to applications, including optimal metrics for the first positive curl eigenvalue (Section~\ref{sec0}), the minimality of the Hopf map for the $3$-energy (Section~\ref{app:3-energy}), the global minimality of the Hopf map in the Faddeev--Skyrme model (Section~\ref{sec:app3}), and the sharp lower bound of the magnetic field of a zero mode (Section~\ref{sec:app4}). In appendix~A, we relate Theorem~\ref{thm:GN-Sn} to a weighted Yamabe problem. 
Further applications concerning sharp lower bounds for the original Faddeev--Skyrme model (fields $\R^3\to\S^2$) and several of its conformally invariant variants are collected in Appendix~B. 
A regularity theorem for weak solutions of \eqref{eq:main} is proved in Appendix C.

\medskip

\section{Preliminaries}\label{sec:prelim}

Throughout, we assume that $n\equiv 3\pmod 4$, so that the curl operator is self-adjoint.
\subsection{Exterior calculus}

Let $(M,g)$ be an oriented Riemannian manifold of dimension $n\equiv 3\pmod 4$. For each $0\le p\le n$, we denote by $\Omega^p$ the space of smooth $p$-forms and equip it with the $L^2$ inner product induced by $g$. The exterior derivative is denoted by $\rd:\Omega^p\to\Omega^{p+1}$, with the convention $\Omega^{n+1}=0$. We fix the orientation determined by the volume form $\rdV$. The Hodge star operator $*:\Omega^p\to\Omega^{n-p}$ is characterized by
\eq{\label{eq:def_Hodge_star}
    \alpha\w*\beta = \<\alpha, \beta\>\,\rdV.
}
The codifferential, i.e. the $L^2$-adjoint of $\rd$, is
\eq{
    \rd^*\coloneqq (-1)^{n(p+1)+1}*\rd*:\Omega^p\ra\Omega^{p-1}.
}

Since $n\equiv 3\pmod 4$, one has $\rd^*=-*\rd*$ on $\Omega^{\frac{n-1}{2}}$. We recall the standard identities
\eq{
    *^2 = (-1)^{p(n-p)}\,{\rm id},\qquad \rd^2 = (\rd^*)^2=0.
}
In particular, throughout this paper $*^2={\rm id}$.

The Hodge Laplacian is defined by ${\Delta \coloneqq \rd\rd^* + \rd^*\rd}$
and the rough Laplacian ${\nabla^*\nabla \coloneqq -\rtr\nabla^2}$.
These operators are related by the Weitzenb\"ock formula
\eq{
    \Delta = \nabla^*\nabla + \sum_{i,j} e^j \w e_i \ip R(e_i, e_j),
}
where $\{e_i\}$ is a local orthonormal frame, $\{e^i\}$ is the dual $1$-form coframe, $\ip$ is the interior multiplication, and $R$ is the curvature tensor. On the standard round sphere $\SS^n$, acting on $p$-forms, this reduces to
\eq{\label{sphere_Weitzenbock}
    \Delta = \nabla^*\nabla + p(n-p).
}

We record several identities that will be used repeatedly.

\begin{proposition}\label{basic_form}
Let $(M,g)$ be an oriented closed Riemannian manifold of dimension $n\equiv 3\pmod 4$. For any $\alpha,\beta\in\Omega^{\frac{n-1}{2}}$ and $X\in\Gamma(TM)$, the following identities hold:
\begin{enumerate}
    \item $\abs{X\ip\alpha}^2+\abs{X^\flat\w\alpha}^2=\abs{X}^2\abs{\alpha}^2$;
    \item $X\ip*\!\alpha=-*(X^\flat\w\alpha)$;
    \item $\<X\ip*\!\alpha,\beta\>=-\<\alpha, X\ip*\beta\>$, and in particular $\<X\ip*\!\alpha,\alpha\>=0$;
    \item $\rd = \sum_i e^i\w\nabla_{e_i}$ and $\rd^*=-\sum_i e_i\ip\nabla_{e_i}$;
    \item $\sum_i e^i\w e_i\ip\alpha = \frac{n-1}{2}\alpha$ and $\sum_i e_i\ip e^i\w\alpha = \frac{n+1}{2}\alpha$;
    \item $e^i\w e_j\ip\alpha + e_j\ip e^i\w\alpha = \delta_{ij}\alpha$.
\end{enumerate}
More generally, for $\alpha\in\Omega^p$ and $\beta\in\Omega^{n-p-1}$ the above formulas remain valid, with the following modifications:
\begin{enumerate}
    \item[(2)] $X\ip*\!\alpha=(-1)^p*(X^\flat\w\alpha)$;
    \item[(3)] $\<X\ip*\!\alpha,\beta\>=(-1)^p\<\alpha, X\ip*\beta\>$;
    \item[(5)] $\sum_i e^i\w e_i\ip\alpha = p\alpha$, \quad $\sum_i e_i\ip e^i\w\alpha = (n-p)\alpha$.
\end{enumerate}
In particular,
\eq{\label{direct_dual}
    \<X\ip\alpha,\beta\> = \<\alpha,X^\flat\w\beta\>, \quad\forall \alpha\in\Omega^p,\ \beta\in\Omega^{p-1}.
}
\end{proposition}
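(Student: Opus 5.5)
These identities are pointwise and linear-algebraic, so I will verify them at a single point, in an oriented orthonormal frame $\{e_i\}$ with dual coframe $\{e^i\}$. By linearity it suffices to test on basis forms $e^I=e^{i_1}\w\dots\w e^{i_p}$. The basic tool is that $e^i\w$ and $e_i\ip$ are mutually adjoint (this is \eqref{direct_dual}). Moreover, they satisfy the anticommutation relation
\[
e^i\w e_j\ip+e_j\ip e^i\w=\delta_{ij}\,\rid ,
\]
which is item (6). I will prove (6) first, by checking on $e^I$ the cases $i=j$ with $i\in I$ or $i\notin I$, and $i\ne j$.

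Item (5) then follows by counting. For $e^I$ with $|I|=p$, the operator $e^i\w e_i\ip$ acts as the identity if $i\in I$ and as zero otherwise, so the sum is $p$. Summing (6) over $i=j$ then gives $n-p$ for the other order. Specialising to $p=\frac{n-1}{2}$ gives $\frac{n-1}{2}$ and $\frac{n+1}{2}$.

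Item (1) follows from (6) by pairing with $\alpha$. Using adjointness,
\[
\abs{X\ip\alpha}^2+\abs{X^\flat\w\alpha}^2=\<(X^\flat\w X\ip+X\ip X^\flat\w)\alpha,\alpha\>,
\]
and the operator in parentheses equals $\abs{X}^2\,\rid$ by (6), applied with $X=\sum X^ie_i$ and bilinearity.

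Item (4) is the standard expression of $\rd$ and $\rd^*$ via the Levi-Civita connection, since the connection is torsion-free and metric. I will cite it, or check it at the center of normal coordinates.

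For (2) in general degree, the plan is to compare both sides against an arbitrary test form $\gamma\in\Omega^{n-p-1}$ using \eqref{eq:def_Hodge_star}. On one side,
\[
\<X\ip*\alpha,\gamma\>\rdV=\<*\alpha,X^\flat\w\gamma\>\rdV .
\]
Using $\<*\alpha,*\eta\>=\<\alpha,\eta\>$, $*^2=(-1)^{p(n-p)}$ and the graded commutation of wedge products, this can be rewritten as $\pm\,\gamma\w X^\flat\w\alpha$. On the other side, $\<*(X^\flat\w\alpha),\gamma\>\rdV$ is also expressed as a multiple of $\gamma\w X^\flat\w\alpha$. Tracking the signs yields the factor $(-1)^p$ in the general case. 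For $p=\frac{n-1}{2}$ with $n\equiv3\pmod4$, $p$ is odd, so the sign is $-1$.

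Item (3) then follows by combining (2), the isometry property of $*$, and adjointness:
\[
\<X\ip*\alpha,\beta\>=(-1)^p\<*(X^\flat\w\alpha),\beta\>=(-1)^p\<X^\flat\w\alpha,*\beta\>=(-1)^p\<\alpha,X\ip*\beta\>.
\]
Here $*^2=\rid$ is used in the middle step when $\beta$ has the complementary degree, and in the general case the appropriate sign is tracked. When $p$ is odd and $\beta=\alpha$, the identity reads $\<X\ip*\alpha,\alpha\>=-\<X\ip*\alpha,\alpha\>$, so this quantity vanishes.

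The only real obstacle is sign bookkeeping in (2) and (3). I would double-check these signs on an explicit example, say $n=3$, $p=1$, $\alpha=e^1$, $X=e_2$: then $X^\flat\w\alpha=e^2\w e^1=-e^3{}^{*}$-dual, i.e. $*(e^2\w e^1)=-e^3$, while $X\ip *e^1=e_2\ip(e^2\w e^3)=e^3$. This is consistent with the sign $(-1)^p=-1$.
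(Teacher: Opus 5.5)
The paper gives no proof of this proposition. It simply records the identities as standard exterior-algebra facts, so there is no competing argument. Your pointwise verification in an oriented orthonormal frame is correct, but a few steps are only announced and are worth writing out.

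\emph{The adjointness relation.} \eqref{direct_dual}, which you use as your basic tool, is itself part of the statement. It follows from the same basis check: $\<e_i\ip e^I,e^J\>$ and $\<e^I,e^i\w e^J\>$ are both nonzero only when $I=\{i\}\cup J$. In that case both equal $(-1)^{k-1}$, where $i$ is the $k$-th index of $I$.

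\emph{The sign in (2).} It closes as you predict. For $\gamma\in\Omega^{n-p-1}$, the two sides equal $(-1)^{p(n-p)+n-p-1}\,\gamma\w X^\flat\w\alpha$ and $(-1)^{(p+1)(n-p-1)}\,\gamma\w X^\flat\w\alpha$. The exponents differ by exactly $p$, so (2) holds in every dimension. A shorter route is available: for $\gamma\in\Omega^{p+1}$, contract the vanishing $(n+1)$-form $\gamma\w*\alpha$ with $X$. This gives $\gamma\w(X\ip*\alpha)=(-1)^p(X\ip\gamma)\w*\alpha=(-1)^p\,\gamma\w*(X^\flat\w\alpha)$, and nondegeneracy of the wedge pairing yields (2).

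\emph{The sign in (3).} The hypothesis that $n$ is odd is precisely what gives $*^2=\rid$ in every degree. So your middle step $\<*\eta,\beta\>=\<\eta,*\beta\>$ holds for all $p$, with no further sign to track. In even dimension the same chain would give the sign $-1$ for every $p$, not $(-1)^p$.

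\emph{The $n=3$ example.} One phrase in it is garbled, but the computation itself is right. The values $*(e^2\w e^1)=-e^3$ and $e_2\ip(e^2\w e^3)=e^3$ confirm the sign.
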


\subsection{The curl operator}\label{sec2.2}

Let $n\equiv 3\pmod 4$ and set $p\coloneqq\frac{n-1}{2}$. We define the curl operator by
\eq{\label{eq:curl-def}
    \curl \coloneqq *\rd: \Omega^{\frac{n-1}{2}}\ra \Omega^{\frac{n-1}{2}}.
}
When $n=3$ and $\alpha$ is the $1$-form dual to a vector field $X$, definition \eqref{eq:curl-def} coincides with the usual vector-calculus curl.

A basic computation using $\<\alpha,\beta\>\rdV=\alpha\wedge*\beta$ and Stokes' theorem shows that on a closed manifold $M$ the curl operator is formally self-adjoint on $\Omega^{\frac{n-1}{2}}(M)$:
\eq{\label{eq:curl-selfadj}
    \int_M \< \curl\alpha,\beta\>\,\rdV
    = \int_M \< \alpha,\curl\beta\>\,\rdV.
}
In particular,
\eq{\label{eq:curl-energy}
    \int_M \< \curl\alpha,\alpha\>\,\rdV = \int_M \alpha\wedge \rd\alpha.
}

Let $\tilde g=\rho^2g$ for some smooth function $\rho>0$. The standard conformal covariance is
\eq{\label{conformal_change}
    *_{\tilde{g}} = \rho^{n-2p} *_{g}, \quad \<\cdot,\cdot\>_{\tilde{g}} = \rho^{-2p}\<\cdot,\cdot\>_g \quad\hbox{on}\ \Omega^p.
}
In particular, for any $\alpha\in\Omega^{\frac{n-1}{2}}$,
\eq{
    \curl_{\tilde{g}}\alpha = *_{\tilde{g}}\rd\alpha = \rho^{-1}\,\curl_g\alpha.
}
Together with \eqref{conformal_change} this yields
\eq{
    \abs{\curl_{\tilde{g}}\alpha}_{\tilde{g}}^{\frac{2n}{n+1}} = \rho^{-n}\abs{\curl_g\alpha}_g^{\frac{2n}{n+1}},\qquad
    \<\curl_{\tilde{g}}\alpha, \alpha\>_{\tilde{g}} = \rho^{-n}\<\curl_g\alpha,\alpha\>_g.
}
Consequently, the following two quantities
\eq{\label{functionals}
    \int \abs{\curl\alpha}^{\frac{2n}{n+1}}\,\rdV, \qquad \int \<\curl\alpha,\alpha\>\,\rdV
}
are conformally invariant, and hence so is $J(\alpha)$.

\subsection{Killing forms}
We recall the definition and basic properties of Killing forms.

\begin{definition}\label{def_Killing}
A co-closed $p$-form $\alpha\in\Omega^p$ is called a \emph{Killing $p$-form} if
\eq{\label{eq:104}
    \nabla_X\alpha = \frac{1}{p+1}X\ip\rd\alpha,\quad \forall X\in\Gamma(TM).
}
\end{definition}

In particular, an $\frac{n-1}{2}$-Killing form is defined by
\eq{
    \nabla_X\alpha = \frac{2}{n+1}X\ip\rd\alpha,\quad \forall X\in\Gamma(TM).
}
On $\S^n$, each $\frac{n-1}{2}$-Killing form $\alpha$ can be decomposed as
\eq{
    \alpha = \alpha_++\alpha_-,
}
where $\alpha_\pm$ is a first $\pm$-eigenform of $\curl$, i.e.
\eq{
    \curl\alpha_+=\frac{n+1}{2}\alpha_+, \quad \curl\alpha_-=-\frac{n+1}{2}\alpha_-.
}
We call $\alpha$ a positive (respectively negative) $\frac{n-1}{2}$-Killing form if $\alpha=\alpha_+$ (respectively $\alpha=\alpha_-$).

The following properties are important; analogous statements hold for negative $\frac{n-1}{2}$-Killing forms.

\begin{proposition}\label{Killing_properties}
Let $\alpha$ be a positive $\frac{n-1}{2}$-Killing form on $\S^n$. Then
\begin{enumerate}
    \item $\nabla_X\alpha = X\ip*\alpha$ for every vector field $X$;
    \item $\abs{\alpha}$ is constant.
\end{enumerate}
\end{proposition}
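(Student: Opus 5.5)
We need to prove Proposition~\ref{Killing_properties}: for a positive Killing $\frac{n-1}{2}$-form $\alpha$ on $\S^n$ we have (1) $\nabla_X\alpha=X\ip *\alpha$ for all $X$, and (2) $\abs{\alpha}$ is constant. The plan is to prove (1) by a direct identity from the definition, and then deduce (2) from (1) together with the skew-symmetry identity in Proposition~\ref{basic_form}(3).

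For (1), start from the Killing equation in Definition~\ref{def_Killing} with $p=\frac{n-1}{2}$, namely $\nabla_X\alpha=\frac{2}{n+1}X\ip \rd\alpha$. Since $*^2=\mathrm{id}$ on the relevant degrees, we have $\rd\alpha=*\curl\alpha$, where $\rd\alpha$ is an $\frac{n+1}{2}$-form. Note that $*^2=\mathrm{id}$ holds on all degrees when $n$ is odd, since $p(n-p)$ is then even. Positivity gives $\curl\alpha=\frac{n+1}{2}\alpha$. Therefore
\[
\rd\alpha=*\curl\alpha=\tfrac{n+1}{2}*\alpha,
\]
and substituting this into the Killing equation yields
\[
\nabla_X\alpha=\tfrac{2}{n+1}\cdot\tfrac{n+1}{2}\,X\ip*\alpha=X\ip*\alpha.
\]
The one point requiring care is the sign convention for $*$ on $\frac{n+1}{2}$-forms, and $*^2=\mathrm{id}$ settles it.

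For (2), we differentiate $\abs{\alpha}^2$:
\[
X\abs{\alpha}^2=2\<\nabla_X\alpha,\alpha\>=2\<X\ip*\alpha,\alpha\>,
\]
which vanishes by Proposition~\ref{basic_form}(3). Since $\S^n$ is connected, $\abs{\alpha}$ is constant.

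The main obstacle, though minor, is checking the sign in Proposition~\ref{basic_form}(3) in degree $\frac{n-1}{2}$. This is the general formula with $p=\frac{n-1}{2}$ odd, since $n\equiv3\pmod 4$, so $(-1)^p=-1$. That sign gives antisymmetry, and the pairing of $\alpha$ with itself therefore vanishes.
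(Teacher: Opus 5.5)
Your proposal is correct and follows essentially the same route as the paper. For (1), you substitute $\rd\alpha = *\curl\alpha = \frac{n+1}{2}*\alpha$ into the Killing equation. For (2), you differentiate $\abs{\alpha}^2$ and apply the antisymmetry in Proposition~\ref{basic_form}(3), and your sign checks for $*^2=\mathrm{id}$ and $(-1)^{\frac{n-1}{2}}=-1$ are exactly right.
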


\begin{proof}
(1) By definition we have $\curl\alpha = \frac{n+1}{2}\alpha$. Combining with Definition~\ref{def_Killing}, we have
\eq{
    \nabla_X\alpha = \frac{2}{n+1}X\ip \rd\alpha = \frac{2}{n+1}X\ip *\curl\alpha = X\ip *\alpha.
}

(2) It follows from (1) and Proposition~\ref{basic_form}(3).
\end{proof}

\subsection{Sharp Sobolev inequality on \texorpdfstring{$\S^n$}{Sn}}

For comparison, we recall the sharp critical Sobolev inequality on $\S^n$:
\eq{\label{eq:S}
\int_{\S^n} \Big( \abs{\nabla u}^2+\frac{n(n-2)}4 u^2 \Big) \ge \frac{n(n-2)}{4}\,\omega_n^{\frac{2}{n}}\left( \int_{\S^n}\abs{u}^{\frac{2n}{n-2}}\right)^{\frac{n-2}{n}}.
}
Our main result may be regarded as a form-analogue of \eqref{eq:S}. It is used also in the proof of Theorem \ref{thm:magnetic} in  section \ref{sec:app4}.

\section{A spherical Gagliardo--Nirenberg inequality on \texorpdfstring{$\S^n$}{Sn}}\label{sec:3}

In this section we prove a sharp Gagliardo--Nirenberg inequality on $\S^n$. We begin by recalling the sharp Euclidean Gagliardo--Nirenberg inequality of Del~Pino--Dolbeault~\cite{DelPino02}.

\begin{theorem}[{\cite[Theorem~1]{DelPino02}}]\label{thm:DelPinoGN}
Let $n\ge 2$ and set $p=\frac{n+1}{n-1}$. Then for all
\eq{
    u\in\mathcal{D}_p(\R^n) \coloneqq\{ u\in L^{p+1}(\R^n)\cap L^{2p}(\R^n)\,|\, \nabla u\in L^2(\R^n) \},
}
we have
\eq{\label{eq:GN-Rn}
    \frac{(n-1)^2}{4}\omega_n^{\frac{2}{n}} \int_{\R^n} \abs{u}^{\frac{2(n+1)}{n-1}}
    \leq \Big( \int_{\R^n} \abs{u}^{\frac{2n}{n-1}} \Big)^{\frac{2}{n}} \int_{\R^n} \abs{\nabla u}^2.
}
Moreover, equality holds if and only if
\eq{\label{eq:6}
    u(x) = \frac{c}{(\lambda^2 + \abs{x-x_0}^2)^{\frac{n-1}{2}}}
}
for some $c\in\R$, $\lambda>0$ and $x_0\in\R^n$.
\end{theorem}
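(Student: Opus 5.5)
We plan to prove Theorem~\ref{thm:DelPinoGN} by the mass-transportation method of Cordero-Erausquin--Nazaret--Villani rather than by Del~Pino--Dolbeault's original argument. Set $p=\frac{n+1}{n-1}$. By density and by replacing $u$ with $\abs{u}$, which leaves every term in \eqref{eq:GN-Rn} unchanged, we may assume that $u\ge0$ is smooth and compactly supported. Both sides of \eqref{eq:GN-Rn} scale the same way under $u\mapsto cu(\lambda x)$. We therefore normalize $\int u^{2p}=1$, put $F\coloneqq u^{2p}$, and take the target density $G\coloneqq h^{2p}$, where $h(y)=a(1+\abs{y}^2)^{-\frac{n-1}{2}}$ and $a$ is chosen so that $\int G=1$. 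The reason for this choice is that $h$ is the candidate extremal \eqref{eq:6}.

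Let $T=\nabla\varphi$ be the Brenier map pushing $F$ forward to $G$. The Monge--Amp\`ere equation $F=G(T)\det D^2\varphi$ holds $F$-a.e. Since $h^{2p(1-1/n)}=h^{p+1}$, the change of variables together with the arithmetic--geometric mean inequality $(\det D^2\varphi)^{1/n}\le\frac1n\Delta_A\varphi$ (where $\Delta_A$ is the Alexandrov Laplacian) gives
\[
\int G^{1-\frac1n}=\int F^{1-\frac1n}(\det D^2\varphi)^{\frac1n}\le \frac1n\int F^{1-\frac1n}\Delta\varphi .
\]
Integrating by parts, which uses that the distributional Laplacian dominates the Alexandrov one, and noting $F^{1-1/n}=u^{p+1}$, we get
\[
\int G^{1-\frac1n}\le-\frac{p+1}{n}\int u^{p}\,\nabla u\cdot\nabla\varphi .
\]
By Cauchy--Schwarz, the right-hand side is at most
\[
\frac{p+1}{n}\Big(\int\abs{\nabla u}^2\Big)^{\frac12}\Big(\int u^{2p}\abs{\nabla\varphi}^2\Big)^{\frac12}
=\frac{p+1}{n}\norm{\nabla u}_2\Big(\int\abs{y}^2G(y)\,\rd y\Big)^{\frac12},
\]
where the equality uses the transport property. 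This bound does not involve $\int u^{p+1}$. To bring that term in, we will run the same argument with the rescaled target $G_\lambda(y)=\lambda^{-n}G(y/\lambda)$ and use the inequality $\int F^{1-1/n}\ge\int G^{1-1/n}$ only in the combined form dictated by the equality case. Concretely, we will use Young's inequality $AB\le\frac{\epsilon}{2}A^2+\frac1{2\epsilon}B^2$ in place of Cauchy--Schwarz, so that an $\int u^{p+1}$ term appears linearly. Taking the infimum over $\lambda$ (equivalently over $\epsilon$) then produces the multiplicative form $(\int u^{p+1})^{2/n}\int\abs{\nabla u}^2$.

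Throughout, the constant is pinned down by running every step with $u=h$ and $T=\mathrm{id}$. In that case all inequalities are equalities, since $\nabla h$ is proportional to $-h\,y/(1+\abs{y}^2)$; the Young step balances exactly because of the specific exponent $p=\frac{n+1}{n-1}$. Evaluating the resulting integrals of $(1+\abs{y}^2)^{-s}$ via Beta functions should give the constant $\frac{(n-1)^2}{4}\omega_n^{2/n}$. To make the value of $\omega_n$ appear transparently, we will compute these integrals by stereographic projection rather than with Gamma functions.

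We expect the main obstacle to be the bookkeeping that makes the single Young step sharp and exponent-consistent for this particular $p$. We will set it up as an identity, a ``free-energy'' expression in $u$ that is minimized at $h$, and verify it first on $h$ itself.

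For the equality case, equality in the arithmetic--geometric mean step forces $D^2\varphi$ to be a scalar multiple of the identity, and equality in the Young/Cauchy--Schwarz step forces $\nabla u$ to be parallel to $T-\mathrm{id}$ with a fixed factor. Together these give $T(x)=\lambda^{-1}(x-x_0)$, and hence $u$ is of the form \eqref{eq:6}. To make this rigorous for nonsmooth $u$, we will rely on the regularity theory for the Brenier map, namely Alexandrov second derivatives and the fact that the distributional Hessian dominates the Alexandrov Hessian; this is where we expect most of the technical care to be needed.
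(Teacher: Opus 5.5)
\textbf{There is a genuine gap: the transport functional has the wrong exponent.} The identity $h^{2p(1-\frac1n)}=h^{p+1}$ (equivalently $F^{1-\frac1n}=u^{p+1}$) is false for $p=\frac{n+1}{n-1}$. The left exponent is $\frac{2(n+1)}{n}$ and the right one is $\frac{2n}{n-1}$; the equation $2p(1-\frac1n)=p+1$ holds only at the Sobolev endpoint $p=\frac{n}{n-2}$.

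With the functional you actually transport, $\nabla F^{1-\frac1n}=\frac{2(n+1)}{n}u^{\frac{n+2}{n}}\nabla u$. Pairing this with $\sqrt F\,\abs{\nabla\varphi}=u^{p}\abs{\nabla\varphi}$ leaves an uncontrolled factor $u^{-\frac{2}{n(n-1)}}$, so your Cauchy--Schwarz step does not close.

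The rescaling/Young device cannot repair this. For the exponent $1-\frac1n$, the AM--GM bound $(\det D^2\varphi)^{1/n}\le\frac1n\Delta\varphi$ has no zeroth-order term. Under $G_\lambda(y)=\lambda^{-n}G(y/\lambda)$, both $\int G_\lambda^{1-\frac1n}$ and $(\int\abs{y}^2G_\lambda)^{1/2}$ are multiplied by $\lambda$, so optimizing over $\lambda$ only returns the same scale-invariant, Sobolev-type statement. There is no term through which $\int u^{p+1}$ could enter.

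\textbf{The missing idea.} This is how Cordero-Erausquin--Nazaret--Villani treat the Del~Pino--Dolbeault family. Transport $\int\rho^{\gamma}$ with $\gamma=\frac{p+1}{2p}=\frac{n}{n+1}$, which lies in $[1-\frac1n,1)$; for $n\ge3$ this is exactly the condition $p\le\frac{n}{n-2}$. Then linearize with
\[
(\det M)^{1-\gamma}\le 1-n(1-\gamma)+(1-\gamma)\,\mathrm{tr}\,M,
\]
which follows from AM--GM together with $t^{s}\le 1-s+st$ for $s=n(1-\gamma)=\frac{n}{n+1}$. Now $F^{\gamma}=u^{p+1}$ and $G^{\gamma}=h^{p+1}$ hold exactly, your Cauchy--Schwarz step goes through as written, and one gets
\[
(n+1)\int h^{p+1}\le \int u^{p+1}+(p+1)\norm{\nabla u}_2\Big(\int\abs{y}^2h^{2p}\,\rd y\Big)^{\frac12}.
\]
The zeroth-order term $\frac1{n+1}\int F^\gamma$ is where $\int u^{p+1}$ appears linearly. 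Replacing $G$ by $G_\lambda$ multiplies the left side by $\lambda^{\frac{n}{n+1}}$ but the moment term by $\lambda$. Maximizing in $\lambda$ then gives $\norm{\nabla u}_2^{\,n}\int u^{p+1}\ge c$ under $\int u^{2p}=1$, which is the stated inequality by homogeneity, with equality at $u=h$, $T=\rid$. Nothing here is special to $p=\frac{n+1}{n-1}$: the argument covers $1<p\le\frac{n}{n-2}$.

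\textbf{Comparison with the paper.} Once corrected, your argument is a genuinely different route from the paper's, which gives no proof and simply imports the result from Del~Pino--Dolbeault. The transport argument is self-contained and produces the sharp constant directly. The equality case still requires the Brenier-map regularity you flag.
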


Inequality \eqref{eq:GN-Rn} belongs to a family of sharp interpolation inequalities established by Del~Pino--Dolbeault, connecting the sharp Sobolev inequality with the logarithmic Sobolev inequality; see \cite{DelPino02,Case15} and the appendix.

We now turn to the corresponding sharp inequality on the sphere.

\begin{theorem}\label{thm:GN-Sn}
Let $n\ge 2$. For every smooth function $f$ on $\S^n$ we have
\eq{\label{eq:GN-Sn}
    \frac{(n-1)^2}{4}\omega_n^{\frac{2}{n}} \int_{\S^n} \abs{f}^{\frac{2(n+1)}{n-1}}
    \leq \Big( \int_{\S^n} \abs{f}^{\frac{2n}{n-1}} \Big)^{\frac{2}{n}} \int_{\S^n} \Big( \abs{\nabla f}^2 + \frac{(n-1)^2}{4}\abs{f}^2 \Big).
}
Moreover, equality holds if and only if $f$ is constant up to conformal transformations, i.e. there exists a conformal diffeomorphism $\Psi:\S^n\to \S^n$ with $\Psi^*g=\rho^2g$ such that $f=C\rho^{\frac{n-1}{2}}$ for some constant $C$.
\end{theorem}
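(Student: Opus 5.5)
Following the route sketched in the introduction, we transfer \eqref{eq:GN-Sn} to $\R^n$ by stereographic projection and then use Theorem~\ref{thm:DelPinoGN} twice: once for $u$ and once for its Kelvin transform.

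\textbf{Step 1: transfer to $\R^n$.} Let $\Psi:\R^n\to\S^n\setminus\{N\}$ be the inverse stereographic projection, so that $\Psi^*g_{\S^n}=\phi^2|dx|^2$ with $\phi=\frac{2}{1+\abs{x}^2}$. Set
\[
u=\phi^{\frac{n-1}{2}}\,f\circ\Psi .
\]
The exponent $\frac{2n}{n-1}$ is conformally critical for the weight $\phi^{\frac{n-1}{2}}$, so $\int_{\S^n}\abs{f}^{\frac{2n}{n-1}}=\int_{\R^n}\abs{u}^{\frac{2n}{n-1}}$. The other two terms carry weights:
\[
\int_{\S^n}\abs{f}^{\frac{2(n+1)}{n-1}}=\int_{\R^n}\phi^{-1}\abs{u}^{\frac{2(n+1)}{n-1}}.
\]
For the gradient term we use the conformal covariance of $-\Delta+\frac{n(n-2)}4$ together with the identity $\frac{(n-1)^2}{4}-\frac{n(n-2)}{4}=\frac14$. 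Writing $f=\phi^{-\frac{n-2}{2}}v$ with $v=\phi^{\frac12}u$ gives
\[
\int_{\S^n}\Bigl(\abs{\nabla f}^2+\tfrac{(n-1)^2}{4}f^2\Bigr)=\int_{\R^n}\abs{\nabla(\phi^{1/2}u)}^2+\tfrac14\int_{\R^n}\phi\, u^2 .
\]
Expanding the right-hand side and integrating by parts the cross term $\int \phi^{1/2}u\nabla\phi^{1/2}\cdot\nabla u$ should collapse it to $\int_{\R^n}\phi\abs{\nabla u}^2$. We will check this using $\Delta\phi^{1/2}$ explicitly; it is also consistent with the factor-$2$ bookkeeping in the next step. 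The target is therefore the weighted inequality
\[
\frac{(n-1)^2}{4}\omega_n^{\frac2n}\int_{\R^n}\frac{1+\abs{x}^2}{2}\abs{u}^{\frac{2(n+1)}{n-1}}\le\Bigl(\int_{\R^n}\abs{u}^{\frac{2n}{n-1}}\Bigr)^{\frac2n}\int_{\R^n}\frac{2}{1+\abs{x}^2}\abs{\nabla u}^2 .
\]
If the cross term does not vanish exactly, we will absorb the leftover zeroth-order term into the definition of this weighted inequality.

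\textbf{Step 2: Kelvin transform and averaging.} Apply \eqref{eq:GN-Rn} to $\tilde u(x)=\abs{x}^{-(n-1)}u(x/\abs{x}^2)$. The $L^{\frac{2n}{n-1}}$ norm is invariant. Changing variables $y=x/\abs{x}^2$ gives
\[
\int\abs{\tilde u}^{\frac{2(n+1)}{n-1}}=\int\abs{y}^2\abs{u}^{\frac{2(n+1)}{n-1}} .
\]
For the gradient we use the conformal covariance of $\Delta$ under inversion, which produces $\int\abs{y}^{-2}\abs{\nabla(\abs{y}^{?}u)}^2$-type terms. Checking that these reduce to a weighted Dirichlet term compatible with averaging is \textbf{the main obstacle}: the weights must combine so that
\[
\tfrac12\bigl(1+\abs{x}^2\bigr)\quad\text{and}\quad \frac{2}{1+\abs{x}^2}
\]
emerge after averaging. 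On the left this is clear, since $\frac12(1+\abs{x}^2)$ is the average of $1$ and $\abs{x}^2$. On the right it requires an inequality of the form
\[
\frac{(A+B)^2}{4}\le \Bigl(\int\tfrac{2}{1+\abs{x}^2}\abs{\nabla u}^2\Bigr)\cdot(\ldots),
\]
or more naturally a Cauchy--Schwarz/AM--GM step. Our plan is to use the two-point sum $\frac{X+Y}{2}$, where $X$ and $Y$ are the two Dirichlet energies, together with the pointwise inequality $\frac{1}{2}\bigl(1+\abs{x}^{-2}\cdots\bigr)$ against $\frac{2}{1+\abs{x}^2}$. If a direct average fails, the fallback is to apply \eqref{eq:GN-Rn} to $u$ composed with the conformal map $\Psi^{-1}\circ R\circ\Psi$ for rotations $R$, and average over $R\in SO(n+1)$: the averaged weight $\phi^{-1}$ becomes constant on the sphere side, which directly yields the spherical inequality.

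\textbf{Step 3: equality.} Equality forces equality in the Euclidean applications of \eqref{eq:GN-Rn}. By Theorem~\ref{thm:DelPinoGN}, $u=c(\lambda^2+\abs{x-x_0}^2)^{-\frac{n-1}{2}}$. Transported back to the sphere, this is exactly $f=C\rho^{\frac{n-1}{2}}$ for a conformal diffeomorphism $\Psi$ of $\S^n$. Conversely, conformal invariance of both sides shows that these functions do attain equality: the constant function does, and the full expression is invariant once one checks that $\rho$-weighted constants transform correctly.
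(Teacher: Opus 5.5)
There is a genuine gap: of the two computations that carry the proof, the first is wrong and the second is missing. Your transfer of the $L^{\frac{2n}{n-1}}$ and $L^{\frac{2(n+1)}{n-1}}$ terms is correct, but the gradient term has the wrong exponent. From $u=\phi^{\frac{n-1}{2}}f\circ\Psi$ and $f\circ\Psi=\phi^{-\frac{n-2}{2}}v$ one gets $v=\phi^{-\frac12}u$, not $\phi^{\frac12}u$. With the correct $v$, use $\nabla\phi^{-1/2}=\frac12\phi^{1/2}x$, $\phi\,(1+\abs{x}^2)=2$ and $\int u\<x,\nabla u\>=-\frac n2\int u^2$ (for $n\ge 3$). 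One finds
\[
\int_{\S^n}\Bigl(\abs{\nabla f}^2+\tfrac{(n-1)^2}{4}f^2\Bigr)=\int_{\R^n}\frac{1+\abs{x}^2}{2}\abs{\nabla u}^2-\frac{n-1}{2}\int_{\R^n}u^2 .
\]
So the Dirichlet weight is $\phi^{-1}=\frac{1+\abs{x}^2}{2}$, not $\phi$, and there is a negative zeroth-order term that cannot simply be absorbed. The weighted inequality you set as your target is in fact false. Take $f\equiv 1$, i.e.\ $u=\phi^{\frac{n-1}{2}}$, and let $\xi_{n+1}=\frac{\abs{x}^2-1}{\abs{x}^2+1}$ be the height function on $\S^n$. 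Then $\int_{\R^n}\phi\abs{\nabla u}^2=\frac{(n-1)^2}{4}\int_{\S^n}(1-\xi_{n+1}^2)=\frac{n}{n+1}\cdot\frac{(n-1)^2}{4}\omega_n$. Hence the target's right-hand side falls short of its left-hand side $\frac{(n-1)^2}{4}\omega_n^{1+\frac2n}$ by the factor $\frac{n}{n+1}$. This wrong target is also why Step~2 seems to need a Cauchy--Schwarz or AM--GM device to produce $\frac{2}{1+\abs{x}^2}$ from the weights $1$ and $\abs{x}^2$. No such device is available, and none is needed.

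What is missing is exactly the step you call the main obstacle. Your Kelvin transform has exponent $n-1$, not the Dirichlet-conformal exponent $n-2$, so $\int\abs{\nabla\tilde u}^2$ is not invariant. Write $\nabla\tilde u(x)=\abs{x}^{-(n+1)}\bigl(-(n-1)x\,u(y)+R_x\nabla u(y)\bigr)$ with $y=x/\abs{x}^2$ and $R_x$ the reflection. Changing variables and integrating the cross term by parts gives $\int\abs{\nabla\tilde u}^2=\int\abs{x}^2\abs{\nabla u}^2-(n-1)\int u^2$. The factor $\bigl(\int\abs{u}^{\frac{2n}{n-1}}\bigr)^{\frac2n}$ is common to both applications of the Del~Pino--Dolbeault inequality. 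So the plain arithmetic mean of the inequalities for $u$ and $\tilde u$ reproduces the displayed identity term by term, with $\int\frac{1+\abs{x}^2}{2}\abs{u}^{\frac{2(n+1)}{n-1}}=\int_{\S^n}\abs{f}^{\frac{2(n+1)}{n-1}}$ on the left; that is the whole proof for $n\ge 3$.

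Your rotation-averaging fallback is the same idea in sphere coordinates. The Kelvin inversion corresponds to the reflection $\xi_{n+1}\mapsto-\xi_{n+1}$, so averaging over the two poles $\pm e_{n+1}$ already suffices. It would work, but only after the lower-order bookkeeping you have not done:
\[
\int_{\R^n}\abs{\nabla u}^2=\int_{\S^n}(1-\xi_{n+1})\abs{\nabla f}^2+\frac{(n-1)^2}{4}\int_{\S^n}(1+\xi_{n+1})f^2-\frac{n(n-1)}{2}\int_{\S^n}\xi_{n+1}f^2 .
\]
Its average over the two poles is the spherical energy. This form also avoids the boundary term at infinity that the Euclidean integration by parts produces when $n=2$, a case your proposal does not address.

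Finally, the converse in Step~3 cannot rest on ``conformal invariance of both sides''. Under $f\mapsto\rho^{\frac{n-1}{2}}f\circ\Psi$ only the $L^{\frac{2n}{n-1}}$ norm is invariant. Instead, for $f=C\rho^{\frac{n-1}{2}}$ both $u$ and $\tilde u$ are Del~Pino--Dolbeault bubbles. Hence both Euclidean inequalities are equalities, and so is their average.
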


\begin{proof}
We first consider the case $n\ge 3$.

\smallskip

\noindent\emph{Step 1 (Kelvin transform and averaging).} Let $u$ be obtained from a function on $\S^n$ by an inverse transformation of 
\eqref{eq:44} below. Consider the Kelvin transform
\eq{
    \tilde u(x)\coloneqq \abs{x}^{-(n-1)}u\Big(\frac{x}{\abs{x}^2}\Big).
}
It follows that
\eq{
    \int_{\R^n} \abs{\tilde{u}}^{\frac{2n}{n-1}} = \int_{\R^n} \abs{u}^{\frac{2n}{n-1}}, \qquad
    \int_{\R^n} \abs{\tilde{u}}^{\frac{2(n+1)}{n-1}} = \int_{\R^n} \abs{x}^2u^{\frac{2(n+1)}{n-1}}.
}
Moreover, set
\eq{
    y\coloneqq I(x) \coloneqq \frac{x}{\abs{x}^2}, \quad \rd x = \abs{y}^{-2n} \rd y.
}
Note that
\eq{
    DI_x = \abs{x}^{-2}(\rid-2\frac{x}{\abs{x}}\otimes\frac{x}{\abs{x}}) \eqcolon \abs{x}^{-2}R_x,
}
where $R_x$ is a reflection, with $R_x(x)=-x$. Hence
\eq{
    \nabla\tilde{u}(x) = -(n-1)\abs{x}^{-(n+1)}xu(y) + \abs{x}^{-(n-1)}(DI_x)\nabla u(y) = \abs{x}^{-(n+1)}(-(n-1)xu(y)+ R_x\nabla u(y)),
}
and then
\eq{
    \abs{\nabla\tilde{u}}^2(x) = \abs{x}^{-2(n+1)} \{ (n-1)^2\abs{x}^2u(y)^2 + \abs{\nabla u(y)}^2 + 2(n-1)u(y)\<x,\nabla u(y)\> \}.
}
Therefore,
\eq{
    \int_{\R^n} \abs{\nabla\tilde{u}}^2(x) \,\rd x &= \int_{\R^n} \abs{y}^2 \Big( \abs{\nabla u}^2 + (n-1)^2\abs{y}^{-2}u^2 + 2(n-1)u\abs{y}^{-2}\<y,\nabla u\> \Big) \rd y \\
    &= \int_{\R^n} \Big( \abs{y}^2\abs{\nabla u}^2 + (n-1)^2u^2 + 2(n-1)u\<y,\nabla u\> \Big) \rd y.
}
where
\eq{\label{eq:26}
    2(n-1)\int_{\R^n} u\<y,\nabla u\> \rd y = (n-1)\int_{\R^n} \<y,\nabla(u^2)\> \rd y = -n(n-1)\int_{\R^n} u^2.
}
In integration by parts used in the previous equation, we need to take care about the decay rate of $u$ at infinity. This causes no problem, when $n\ge 3$.  
It follows
\eq{
    \int_{\R^n} \abs{\nabla\tilde{u}}^2(x) \,\rd x = \int_{\R^n} \Big( \abs{y}^2\abs{\nabla u}^2 - (n-1)u^2 \Big) \rd y,
}
that is,
\eq{
    \int_{\R^n} \abs{\nabla\tilde{u}}^2 = \int_{\R^n} \abs{x}^2\abs{\nabla u}^2 - (n-1)\int_{\R^n} u^2.
}
Applying \eqref{eq:GN-Rn} to $\tilde{u}$ gives
\eq{\label{eq:7}
    \frac{(n-1)^2}{4}\omega_n^{\frac{2}{n}} \int_{\R^n} \abs{x}^2u^{\frac{2(n+1)}{n-1}} \leq \Big( \int_{\R^n} \abs{u}^{\frac{2n}{n-1}} \Big)^{\frac{2}{n}} \Big( \int_{\R^n} \abs{x}^2\abs{\nabla u}^2 - (n-1)\int_{\R^n} u^2 \Big).
}
Taking the average of \eqref{eq:GN-Rn} and \eqref{eq:7} yields
\eq{\label{eq:GN-weighted}
    \frac{(n-1)^2}{4}\omega_n^{\frac{2}{n}} \int_{\R^n} \frac{1+\abs{x}^2}{2}\abs{u}^{\frac{2(n+1)}{n-1}} 
    \leq \Big( \int_{\R^n} \abs{u}^{\frac{2n}{n-1}} \Big)^{\frac{2}{n}} \Big( \int_{\R^n} \frac{1+\abs{x}^2}{2}\abs{\nabla u}^2 - \frac{n-1}{2}\int_{\R^n} u^2 \Big).
}

\smallskip
\noindent\emph{Step 2 (Pull-back to $\S^n$).}
We now pull $u$ back to $\S^n$ via stereographic projection. Write the inverse stereographic projection $\Psi: \R^n \to \S^n\backslash\{N\}$ by
\eq{
    \Psi(x) = \Big( \frac{2x}{1+\abs{x}^2}, \frac{\abs{x}^2-1}{1+\abs{x}^2} \Big).
}
Then
\eq{\label{eq:95}
    \Psi^*g_{{\rm st}} = \left(\frac{2}{1+\abs{x}^2}\right)^2g_{\R^n}.
}
Define the function $f$ on $\S^n$ by
\eq{\label{eq:44}
    u(x) = \Big(\frac{2}{1+\abs{x}^2}\Big)^{\frac{n-1}{2}}f(\Psi(x)).
}
Then
\eq{
    \int_{\S^n} \abs{f}^{\frac{2n}{n-1}}\,\rd V_{\S^n} = \int_{\R^n} \abs{u}^{\frac{2n}{n-1}}\,\rd x,
}
and
\eq{\label{eq:11}
    \int_{\S^n} \abs{f}^{\frac{2(n+1)}{n-1}}\,\rd V_{\S^n} = \int_{\R^n} \frac{1+\abs{x}^2}{2}\abs{u}^{\frac{2(n+1)}{n-1}}\,\rd x, \qquad \int_{\S^n} \abs{f}^2\,\rd V_{\S^n} = \int_{\R^n} \frac{2}{1+\abs{x}^2}\abs{u}^2\,\rd x.
}
Moreover, we have
\eq{
    \nabla\Big(\Big(\frac{1+\abs{x}^2}{2}\Big)^{\frac{n-1}{2}}u\Big) = \Big(\frac{1+\abs{x}^2}{2}\Big)^{\frac{n-3}{2}}\Big(\frac{1+\abs{x}^2}{2}\nabla u + \frac{n-1}{2}xu\Big),
}
hence
\eq{\label{eq:45}
    \int_{\S^n} \abs{\nabla f}^2 &= \int_{\R^n} \Big(\frac{2}{1+\abs{x}^2}\Big)^{n-2}\Abs{\nabla\Big(\Big(\frac{1+\abs{x}^2}{2}\Big)^{\frac{n-1}{2}}u\Big)}^2 \\
    &= \int_{\R^n} \Big(\frac{2}{1+\abs{x}^2}\Big)^{n-2}\Abs{\Big(\frac{1+\abs{x}^2}{2}\Big)^{\frac{n-3}{2}}\Big(\frac{1+\abs{x}^2}{2}\nabla u + \frac{n-1}{2}xu\Big)}^2 \\
    &= \int_{\R^n} \frac{1+\abs{x}^2}{2}\abs{\nabla u}^2 + \frac{(n-1)^2}{4}\int_{\R^n} \frac{2\abs{x}^2}{1+\abs{x}^2}u^2 + \frac{n-1}{2}\int_{\R^n} \< x,\nabla(u^2)\> \\
    &= \int_{\R^n} \frac{1+\abs{x}^2}{2}\abs{\nabla u}^2 + \frac{(n-1)^2}{4}\int_{\R^n} \frac{2\abs{x}^2}{1+\abs{x}^2}u^2 - \frac{n-1}{2}\int_{\R^n} \rdiv(x)u^2 \\
    &= \int_{\R^n} \frac{1+\abs{x}^2}{2}\abs{\nabla u}^2 + \frac{n-1}{2}\int_{\R^n} \Big( \frac{(n-1)\abs{x}^2}{1+\abs{x}^2} - n \Big)u^2\\
    &= \int_{\R^n} \frac{1+\abs{x}^2}{2}\abs{\nabla u}^2 - \frac{(n-1)^2}{4}\int_{\R^n} \frac{2}{1+\abs{x}^2}u^2 - \frac{n-1}{2}\int_{\R^n} u^2.
}
It follows
\eq{\label{eq:10}
    \int_{\S^n} \Big( \abs{\nabla f}^2 + \frac{(n-1)^2}{4}f^2 \Big) = \int_{\R^n} \frac{1+\abs{x}^2}{2}\abs{\nabla u}^2 - \frac{n-1}{2}\int_{\R^n} u^2.
}

Combining \eqref{eq:GN-weighted} with \eqref{eq:11} and \eqref{eq:10} yields
\eq{
    \frac{(n-1)^2}{4}\omega_n^{\frac{2}{n}} \int_{\S^n} \abs{f}^{\frac{2(n+1)}{n-1}} \leq \Big( \int_{\S^n} \abs{f}^{\frac{2n}{n-1}} \Big)^{\frac{2}{n}} \int_{\S^n} \Big( \abs{\nabla f}^2 + \frac{(n-1)^2}{4}f^2 \Big),
}
the desired inequality. The characterization of the equality case follows by pulling back \eqref{eq:6}. See Lemma~\ref{lem:extremals-conformal} below for details.

We now consider the case $n=2$. In this case, the integration by parts in \eqref{eq:26} produces a boundary contribution at infinity. After averaging, this yields an additional term $2\pi f(N)^2$ in the second factor on the right-hand side of \eqref{eq:GN-weighted}.

More precisely, the last identity in \eqref{eq:26} must be carried out on the ball $B_R\subset\R^2$:
\eq{
    \int_{B_R}\<y,\nabla(u^2)\>\rd y
    = -2\int_{B_R} u^2\,\rd y + \int_{\partial B_R} u^2\<y,\nu\>\,\rd\sigma
    = -2\int_{B_R} u^2\,\rd y + \int_{\partial B_R} Ru^2\,\rd\sigma.
}
Using \eqref{eq:44} and writing $y=R\theta$, we obtain
\eq{
    \int_{\partial B_R} Ru^2\,\rd\sigma
    = \frac{2R^2}{1+R^2}\int_{\S^1} f(\Psi(R\theta))^2\,\rd\theta
    \longrightarrow 4\pi f(N)^2 \quad\hbox{as } R\to\infty.
}
Thus
\eq{
    \int_{B_R}\<y,\nabla(u^2)\>\rd y = -2\int_{B_R} u^2\,\rd y + 4\pi f(N)^2 + o(1),
}
and letting $R\to\infty$ gives the desired correction term.
Consequently, \eqref{eq:7} becomes
\eq{
    \frac{1}{4}\omega_2 \int_{\R^2} \abs{x}^2u^6
    \leq \int_{\R^2} u^4 \cdot \Big( \int_{\R^2} \abs{x}^2\abs{\nabla u}^2 -\int_{\R^2} u^2 + 4\pi f(N)^2 \Big),
}
so after averaging we obtain
\eq{\label{eq:46}
    \frac{1}{4}\omega_2 \int_{\R^2} \frac{1+\abs{x}^2}{2}u^6
    \leq \int_{\R^2} u^4 \cdot \Big( \int_{\R^2} \frac{1+\abs{x}^2}{2}\abs{\nabla u}^2 - \frac{1}{2}\int_{\R^2} u^2 + 2\pi f(N)^2 \Big).
}
Similarly, integrating by parts in \eqref{eq:45} yields the same correction, so \eqref{eq:10} becomes
\eq{\label{eq:47}
    \int_{\S^2} \Big( \abs{\nabla f}^2 + \frac{1}{4}f^2 \Big)
    = \int_{\R^2} \frac{1+\abs{x}^2}{2}\abs{\nabla u}^2 - \frac{1}{2}\int_{\R^2} u^2 + 2\pi f(N)^2.
}
Combining \eqref{eq:46}, \eqref{eq:47}, and \eqref{eq:11}, we see that
\eq{
    \frac{1}{4}\omega_2 \int_{\S^2} f^6 \leq \int_{\S^2} f^4 \cdot \int_{\S^2} \Big( \abs{\nabla f}^2 + \frac{1}{4}f^2 \Big),
}
hence the conclusion still holds.
\end{proof}

\begin{lemma}[Extremals and conformal factors]\label{lem:extremals-conformal}
Let $n\ge 2$ and let $f\ge 0$ be smooth on $\S^n$. Define $u$ on $\R^n$ by \eqref{eq:44}. If $u$ is an extremal of the Del~Pino--Dolbeault inequality \eqref{eq:GN-Rn}, i.e.
\eq{\label{eq:94}
    u(x)=\frac{c}{(\lambda^2+\abs{x-x_0}^2)^{\frac{n-1}{2}}}
}
for some $c>0$, $\lambda>0$, and $x_0\in\R^n$, then there exists a conformal diffeomorphism $\Psi:\S^n\to\S^n$ with $\Psi^*g=\rho^2g$ such that
\eq{\label{eq:f-conformal-factor}
    f = C\,\rho^{\frac{n-1}{2}}
}
for some constant $C>0$.
\end{lemma}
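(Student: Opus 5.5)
Invert \eqref{eq:44} to get an explicit formula for $f$ on $\S^n\setminus\{N\}$, and compare it with the conformal factor of an explicit conformal diffeomorphism of $\S^n$ conjugate to the Euclidean similarity $x\mapsto x_0+\lambda x$.

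\textbf{Step 1: formula for $f$.} Write $y=\Psi(x)$. Inverting \eqref{eq:44} gives
\[
f(y)=\Big(\frac{1+\abs{x}^2}{2}\Big)^{\frac{n-1}{2}}\frac{c}{(\lambda^2+\abs{x-x_0}^2)^{\frac{n-1}{2}}}.
\]
Hence, up to the constant $c\,2^{-\frac{n-1}{2}}$, $f$ is the power $\frac{n-1}{2}$ of the function
\[
h(x)=\frac{1+\abs{x}^2}{\lambda^2+\abs{x-x_0}^2}.
\]

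\textbf{Step 2: the conformal map and its factor.} Let $T(x)=x_0+\lambda x$ and define $\Phi\coloneqq\Psi\circ T^{-1}\circ\Psi^{-1}$, extended by $\Phi(N)=N$. This is a conformal diffeomorphism of $\S^n$, since $T$ is a similarity fixing $\infty$. By \eqref{eq:95},
\[
(\Psi^{-1})^*g_{\R^n}=\Big(\frac{1+\abs{x}^2}{2}\Big)^2 g_{\rm st},
\]
and $(T^{-1})^*g_{\R^n}=\lambda^{-2}g_{\R^n}$. Combining these,
\[
\Phi^*g_{\rm st}=\rho^2 g_{\rm st},\qquad
\rho(\Psi(x))=\frac{2}{1+\abs{T^{-1}x}^2}\cdot\lambda^{-1}\cdot\frac{1+\abs{x}^2}{2}.
\]
Since $1+\abs{T^{-1}x}^2=\lambda^{-2}\big(\lambda^2+\abs{x-x_0}^2\big)$, this simplifies to
\[
\rho(\Psi(x))=\lambda\,\frac{1+\abs{x}^2}{\lambda^2+\abs{x-x_0}^2}=\lambda\,h(x).
\]

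\textbf{Step 3: conclusion.} It follows that
\[
f=c\,2^{-\frac{n-1}{2}}\lambda^{-\frac{n-1}{2}}\rho^{\frac{n-1}{2}}
\]
on $\S^n\setminus\{N\}$, so \eqref{eq:f-conformal-factor} holds with this constant $C>0$. Both sides are continuous, so the identity extends to $N$.

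The only delicate point is bookkeeping the composition of conformal factors, in particular checking that the direction of pullback gives $\rho$ and not $\rho^{-1}$. I would resolve this by a sanity check at $x_0=0$, $\lambda=1$: there $\Phi=\mathrm{id}$, $\rho\equiv1$, and $f$ is constant, as required.
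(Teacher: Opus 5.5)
Your proof is correct and gives the same constant $C=c(2\lambda)^{-\frac{n-1}{2}}$ as the paper. The difference is in how the conformal map is produced.

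The paper uses the standard ball parametrization $\Psi_a$, $a\in\mathbb{B}^{n+1}$, whose conformal factor $\rho_a(p)=\frac{1-\abs{a}^2}{1+\abs{a}^2-2\langle a,p\rangle}$ is known. It simply announces the choice $a=(2x_0,\lambda^2+\abs{x_0}^2-1)/D$ with $D=\abs{x_0}^2+(\lambda+1)^2$. The identity $\rho_a(\Psi(x))=\lambda(1+\abs{x}^2)/(\lambda^2+\abs{x-x_0}^2)$ is left to the reader; it follows from $1-\abs{a}^2=4\lambda/D$ and $1+\abs{a}^2-2\langle a,\Psi(x)\rangle=4(\lambda^2+\abs{x-x_0}^2)/(D(1+\abs{x}^2))$.

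You instead conjugate the similarity $x\mapsto x_0+\lambda x$ and obtain the factor from the chain rule for pullbacks, so nothing has to be guessed and then verified. Conversely, the paper's route gets smoothness at $N$ for free: $\Psi_a$ is the restriction of a M\"obius map of $\R^{n+1}$ whose pole $a$ lies off the sphere. You only assert smoothness at $N$, but it is easy to supply. In the chart $y=x/\abs{x}^2$ around $N$, your $\Phi$ reads $y\mapsto\lambda(y-\abs{y}^2x_0)/(1-2\langle y,x_0\rangle+\abs{x_0}^2\abs{y}^2)$, which is smooth near $y=0$.

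The paper's form also feeds directly into its closing description of all extremals by $a\in\mathbb{B}^{n+1}$. Since $\Psi_a$ and your $\Phi$ have the same conformal factor, they differ by an isometry of $\S^n$.

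One small point: your proposed sanity check at $x_0=0$, $\lambda=1$ is vacuous. There $\Phi=\mathrm{id}$, and both $\rho$ and its reciprocal are $\equiv1$. A check that actually tests the direction of pullback is $x_0=0$, $\lambda\neq1$ at the south pole. There $\Phi$ acts as $x\mapsto x/\lambda$, and your formula correctly gives $\rho=1/\lambda$.
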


\begin{proof}
Conformal transformations of $\S^n$ are parameterized by
\eq{
    \Psi_a(p) \coloneqq -a + \frac{1-\abs{a}^2}{\abs{p-a}^2}(p-a), \quad a\in\mathbb{B}^{n+1},\ p\in\S^n.
}
The pull-back metric $\Psi_a^*g_{{\rm st}}=\rho_a^2g_{{\rm st}}$ has the conformal factor
\eq{
    \rho_a(p) = \frac{1-\abs{a}^2}{\abs{p-a}^2} = \frac{1-\abs{a}^2}{1+\abs{a}^2-2\<a,p\>}, \quad p\in\S^n.
}
Using \eqref{eq:44} and \eqref{eq:94} we have for $p=\Psi(x)$
\eq{
    f(p) = \Big(\frac{1+\abs{x}^2}{2}\Big)^{\frac{n-1}{2}}u(x) = c(2\lambda)^{-\frac{n-1}{2}}\Big( \frac{\lambda(1+\abs{x}^2)}{\lambda^2+\abs{x-x_0}^2} \Big)^{\frac{n-1}{2}}.
}
Choose
\eq{
    a = \frac{ (2x_0,\lambda^2+\abs{x_0}^2-1) }{ \abs{x_0}^2+(\lambda+1)^2 } \in\mathbb{B}^{n+1},
}
then we have
\eq{
    \rho_a(p) = \frac{\lambda(1+\abs{x}^2)}{\lambda^2+\abs{x-x_0}^2}.
}
Hence
\eq{
    f(p) = c(2\lambda)^{-\frac{n-1}{2}}\rho_a(p)^{\frac{n-1}{2}}
    .
}
The claim follows.
\end{proof}
All  solutions on $\S^n$ can be expressed by 
\[
C\Big( \frac{1-\abs{a}^2}{1+\abs{a}^2-2\<a,p\>} \Big)^{\frac{n-1}{2}},
\qquad C>0,\quad a\in \mathbb{B}^{n+1}.\]

\section{Sharp curl--Sobolev inequality}\label{sec:main}

In this section we prove the sharp curl--Sobolev inequality, Theorem~\ref{main_thm}.

Note that
\eq{
    J(\alpha)=J(\alpha+\gamma), \quad\forall\gamma\in\ker(\curl\!)=\ker(\rd).
}
After choosing a suitable representative and rescaling, we may assume that $\alpha$ solves the Euler--Lagrange equation
\eq{\label{eq:EL-curl}
    \curl\alpha=\frac{n+1}{2}\abs{\alpha}^{\frac{2}{n-1}}\alpha.
}
(With this normalization, it is easy to see that a Killing form $\alpha_0$ with $\abs{\alpha_0}=1$ satisfies \eqref{eq:EL-curl}, since $\alpha_0$ is a $\curl$-eigenform with eigenvalue $\frac{n+1}{2}$.)
Then
\eq{
    J(\alpha) = \frac{n+1}{2}\Big(\int_{\S^n}\abs{\alpha}^{\frac{2n}{n-1}}\Big)^{\frac{1}{n}}.
}
Hence the goal is to prove 
\eq{\label{main_ineq}
    \int_{\S^n}\abs{\alpha}^{\frac{2n}{n-1}} \geq \omega_n.
}

We now use a result of Rivi\`ere
\cite[Proposition IV.1]{Riviere98CAG}, where he proved the existence of a minimizer of $J$ among all $\alpha \in \Omega^{\frac {n-1}2}$ with $\int_{\S^n} \< \curl \alpha , \alpha \> \rdV >0$. 
We assume from now on that $\alpha$ is a minimizer, which is in $W^{1,\frac{2n}{n+1}}(\S^n)$. We remark that
we only need to consider the points $x\in \S^n$ with $|\alpha(x)|>0$, see Remark \ref{rem_zero}. 
The regularity theorem about solutions of \eqref{eq:EL-curl} implies that $\alpha \in W^{1,2}(\S^n)\cap C^{1,\gamma}(\{\alpha\not =0\})$. For convenience of the reader, we sketch the proof of the regularity in appendix
\ref{app:C}.
Hence, the following computation is done on $\{\alpha\not =0\}$.

Set
\eq{
    f\coloneqq\abs{\alpha}, \quad \alpha=f\beta, \quad \abs{\beta}=1.
}
Then the Euler--Lagrange equation becomes
\eq{\label{eq:12}
    *\rd(f\beta) = \frac{n+1}{2}f^{\frac{n+1}{n-1}}\beta.
}

\begin{remark}\label{rem_zero}
On the open set $\{f>0\}$ the form $\beta$ is well-defined and smooth, and all calculations are performed on $\{f>0\}$. Since $f\in W^{1,2}(\S^n)$ (see Theorem~\ref{thmC.1}), both $\nabla\alpha$ and $\nabla f$ vanish almost everywhere on the zero set $\{f=0\}$. See also Remark \ref{rem4.8}. Therefore, the resulting integral identities extend to the whole sphere $\S^n$.
\end{remark}

\begin{lemma}\label{lem4.1}
We have
\begin{enumerate}
    \item $\rd\beta=\frac{n+1}{2}f^{\frac{2}{n-1}}*\beta - \rd(\log f)\w\beta$;
    \item $\rd^*\beta = \frac{n+1}{n-1}\nabla(\log f)\ip\beta$;
    \item $\rd^*\alpha=\frac{2}{n-1}\nabla f\ip\beta$.
\end{enumerate}
\end{lemma}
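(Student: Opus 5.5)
We prove the three identities in order, working on the open set $\{f>0\}$ where $\beta=\alpha/f$ is smooth. Throughout we use $*^2=\rid$ on $\frac{n-1}{2}$-forms, and that $\rd^*=-*\rd*$ on this degree since $n\equiv 3\pmod 4$.

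For (1), expand the left side of \eqref{eq:12} as
\[
\rd(f\beta)=\rd f\w\beta+f\,\rd\beta .
\]
Applying $*$ to \eqref{eq:12} and using $*^2=\rid$ gives $\rd(f\beta)=\frac{n+1}{2}f^{\frac{n+1}{n-1}}*\beta$. Dividing by $f$ yields
\[
\rd\beta=\frac{n+1}{2}f^{\frac{2}{n-1}}*\beta-\rd(\log f)\w\beta,
\]
because $\frac{n+1}{n-1}-1=\frac{2}{n-1}$.

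For (3), apply $\rd^*$ to the Euler--Lagrange equation \eqref{eq:EL-curl}. Since $\curl\alpha=*\rd\alpha$, we have $\rd^*\curl\alpha=-*\rd**\rd\alpha=-*\rd\rd\alpha=0$. Hence
\[
0=\rd^*\bigl(f^{\frac{2}{n-1}}\alpha\bigr)=f^{\frac{2}{n-1}}\rd^*\alpha-\nabla\bigl(f^{\frac{2}{n-1}}\bigr)\ip\alpha .
\]
This uses the product rule $\rd^*(h\alpha)=h\,\rd^*\alpha-\nabla h\ip\alpha$, which follows from Proposition~\ref{basic_form}(4). Now $\nabla\bigl(f^{\frac{2}{n-1}}\bigr)=\frac{2}{n-1}f^{\frac{2}{n-1}-1}\nabla f$ and $\alpha=f\beta$. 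Dividing by $f^{\frac{2}{n-1}}$ gives
\[
\rd^*\alpha=\frac{2}{n-1}\nabla f\ip\beta .
\]

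For (2), write $\rd^*\alpha=\rd^*(f\beta)=f\,\rd^*\beta-\nabla f\ip\beta$ and combine with (3):
\[
f\,\rd^*\beta=\Bigl(\frac{2}{n-1}+1\Bigr)\nabla f\ip\beta=\frac{n+1}{n-1}\nabla f\ip\beta .
\]
Dividing by $f$ gives (2).

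No step is a real obstacle. The only care needed is the sign in the product rule for $\rd^*$. This is fixed by $\rd^*=-\sum_i e_i\ip\nabla_{e_i}$, which gives $\rd^*(h\alpha)=h\,\rd^*\alpha-\nabla h\ip\alpha$. It is also what produces the coefficient $\frac{2}{n-1}+1$ in (2).

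Regularity is handled by Remark~\ref{rem_zero}: all identities are pointwise on $\{f>0\}$, where $\alpha\in C^{1,\gamma}$ by the regularity theorem. There $\rd\rd\alpha=0$ holds classically, or at least weakly, which is all the argument for (3) requires.
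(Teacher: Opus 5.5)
Your proof is correct and uses the paper's argument: (1) comes from expanding $\rd(f\beta)$ in \eqref{eq:12}, and the codifferential identities come from $\rd^*$ of the Euler--Lagrange equation vanishing, combined with the Leibniz rule $\rd^*(h\gamma)=h\,\rd^*\gamma-\nabla h\ip\gamma$. The only difference is the order: the paper applies this to $f^{\frac{n+1}{n-1}}\beta$ to get (2) first and then deduces (3), while you apply it to the same form written as $f^{\frac{2}{n-1}}\alpha$ to get (3) first and then deduce (2).
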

\begin{proof}
(1) Using \eqref{eq:12} we have
\eq{
    f\rd\beta + \rd f\w\beta = \frac{n+1}{2}f^{\frac{n+1}{n-1}}*\beta.
}
Dividing by $f$ gives the desired formula.

(2) Using \eqref{eq:12} we have
\eq{
    0 = \rd^*(f^{\frac{n+1}{n-1}}\beta) = f^{\frac{n+1}{n-1}}\rd^*\beta - \nabla(f^{\frac{n+1}{n-1}})\ip\beta.
}
Solving for $\rd^*\beta$ gives the claim.

(3) By definition we have
\eq{
    \rd^*\alpha = \rd^*(f\beta) = f\rd^*\beta - \nabla f\ip\beta.
}
The claim follows from (2).
\end{proof}

\subsection{Decomposition of \texorpdfstring{$\nabla\beta$}{nabla beta}}
\label{subsec:decomp-nablabeta}

We view $\nabla\beta$ as a $T^*\S^n$-valued $\frac{n-1}{2}$-form, i.e. as the assignment $X\mapsto \nabla_X\beta$ for vector fields $X$. For each $X$ we set
\eq{
    P_X &\coloneqq \frac{2}{n-1}\Big(\rd(\log f)\w X\ip\beta - X^\flat\w \nabla(\log f)\ip\beta\Big), \\
    Q_X &\coloneqq f^{\frac{2}{n-1}}\,X\ip*\beta, \\
    S_X &\coloneqq \nabla_X\beta - P_X - Q_X.
}
Then
\eq{\label{eq:decomposition}
    \nabla\beta = P+Q+S.
}

Let us briefly explain the motivation for \eqref{eq:decomposition}. Any matrix $A$ has the orthogonal decomposition
\eq{
    A = \frac{1}{2}(A-A^T) + \frac{1}{n}{\rm tr}(A)\,\rid + \Big(\frac{1}{2}(A+A^T)-\frac{1}{n}{\rm tr}(A)\,\rid\Big),
}
corresponding to the anti-symmetric part, the trace part, and the symmetric trace-free part. Analogously, a $T^*\S^n$-valued form $T$ has the anti-symmetric trace
\eq{
    e^i\w T_{e_i}
}
and the trace part
\eq{
    e_i\ip T_{e_i},
}
where $\{e_i\}$ is a local orthonormal frame and $\{e^i\}$ is the dual coframe.

For $T=\nabla\beta$, we have already seen from Lemma~\ref{lem4.1} that the anti-symmetric part is
\eq{
    e^i\w\nabla_{e_i}\beta = \rd\beta = \frac{n+1}{2}f^{\frac{2}{n-1}}*\beta - \rd(\log f)\w\beta,
}
and the trace part is
\eq{
    e_i\ip \nabla_{e_i}\beta = -\rd^*\beta = -\frac{n+1}{n-1}\nabla(\log f)\ip\beta,
}
where we used Proposition~\ref{basic_form}(4). The remaining component is what we call $S$.

To obtain the sharp Sobolev-type estimate \eqref{eq:27}, we separate the first-order terms (involving $\rd(\log f)$) from the zeroth-order terms (involving only $f$). This leads to the definitions of $P$ and $Q$. By construction, $P$ and $Q$ achieve this separation, and Lemma~\ref{lem4.2} below shows that
\eq{
    e^i\w\nabla_{e_i}\beta = e^i\w P_{e_i} + e^i\w Q_{e_i},
}
and
\eq{
    e_i\ip \nabla_{e_i}\beta = e_i\ip P_{e_i} + e_i\ip Q_{e_i}.
}

\begin{lemma}\label{lem4.2}
Let $\{e_i\}$ be a local frame. Then
\begin{enumerate}
    \item $e^i\w P_{e_i}=-\rd(\log f)\w\beta$;
    \item $e^i\w Q_{e_i}=\frac{n+1}{2}f^{\frac{2}{n-1}}*\beta$;
    \item $e^i\w S_{e_i}=0$;
    \item $e_i\ip P_{e_i}=-\frac{n+1}{n-1}\nabla(\log f)\ip\beta$;
    \item $e_i\ip Q_{e_i}=0$;
    \item $e_i\ip S_{e_i}=0$.
\end{enumerate}
\end{lemma}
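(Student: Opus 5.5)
Items (1), (2), (4), (5) are direct computations with the contraction identities of Proposition~\ref{basic_form}. Items (3) and (6) then follow from the definition $S=\nabla\beta-P-Q$. Indeed, Lemma~\ref{lem4.1} together with Proposition~\ref{basic_form}(4) gives
\[
e^i\w\nabla_{e_i}\beta=\rd\beta=\tfrac{n+1}{2}f^{\frac{2}{n-1}}*\beta-\rd(\log f)\w\beta,
\qquad
e_i\ip\nabla_{e_i}\beta=-\rd^*\beta=-\tfrac{n+1}{n-1}\nabla(\log f)\ip\beta.
\]
Subtracting (1)+(2) from the first identity and (4)+(5) from the second yields (3) and (6). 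So the work lies in (1), (2), (4), (5). Throughout, write $\theta=\rd(\log f)$ and $Y=\nabla(\log f)=\theta^\sharp$, and use the summation convention.

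For (1), we expand
\[
e^i\w P_{e_i}=\tfrac{2}{n-1}\big(e^i\w\theta\w e_i\ip\beta-e^i\w e^i\w Y\ip\beta\big).
\]
The second term vanishes since $e^i\w e^i=0$. In the first term we move $\theta$ past $e^i$, picking up a sign, and apply Proposition~\ref{basic_form}(5): $e^i\w e_i\ip\beta=\frac{n-1}{2}\beta$. This gives $-\theta\w\beta$.

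For (4), we need $e_i\ip(\theta\w e_i\ip\beta)$ and $e_i\ip(e^i\w Y\ip\beta)$. The first equals $\theta(e_i)\,e_i\ip\beta-\theta\w e_i\ip e_i\ip\beta=Y\ip\beta$. For the second we use (5) on the $(\frac{n-1}{2}-1)$-form $Y\ip\beta$, namely $\sum_i e_i\ip e^i\w\gamma=(n-p)\gamma$ with $p=\frac{n-3}{2}$. This gives $\frac{n+3}{2}Y\ip\beta$. Hence
\[
e_i\ip P_{e_i}=\tfrac{2}{n-1}\Big(1-\tfrac{n+3}{2}\Big)Y\ip\beta=-\tfrac{n+1}{n-1}Y\ip\beta,
\]
as claimed.

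For (2), by Proposition~\ref{basic_form}(5) applied to the $\frac{n+1}{2}$-form $*\beta$, we get $e^i\w e_i\ip*\beta=\frac{n+1}{2}*\beta$. Multiplying by $f^{\frac{2}{n-1}}$ gives (2). For (5), we have $e_i\ip e_i\ip*\beta=0$ by antisymmetry of the interior product.

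The only delicate point is sign and degree bookkeeping: which version of Proposition~\ref{basic_form}(5) applies to forms of degree $\frac{n-1}{2}$, $\frac{n-3}{2}$ and $\frac{n+1}{2}$, and the sign when commuting $\theta$ past $e^i$. These counts are exactly what produce the constants $\frac{n-1}{2}$ and $-\frac{n+1}{n-1}$.
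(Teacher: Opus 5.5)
Your proposal is correct and follows the paper's proof essentially step by step. Items (1), (2), (4), (5) are computed directly from the contraction identities of Proposition~\ref{basic_form}, with the same degree bookkeeping (including the factor $\frac{n+3}{2}$ from applying (5) to the $\frac{n-3}{2}$-form $\nabla(\log f)\ip\beta$), and (3), (6) are obtained by subtracting these from $\rd\beta$ and $-\rd^*\beta$ via Lemma~\ref{lem4.1} and Proposition~\ref{basic_form}(4), exactly as the paper does.
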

\begin{proof}
For (1),
\eq{
    e^i\w P_{e_i} &= \frac{2}{n-1}e^i\w\Big(\rd(\log f)\w e_i\ip\beta - e^i\w \nabla(\log f)\ip\beta\Big) \\
    &= -\frac{2}{n-1}\rd(\log f)\w e^i\w e_i\ip\beta \\
    &= -\rd(\log f)\w\beta,
}
where we used Proposition~\ref{basic_form}(5).

For (2),
\eq{
    e^i\w Q_{e_i} = f^{\frac{2}{n-1}}e^i\w e_i\ip*\beta = \frac{n+1}{2}f^{\frac{2}{n-1}}*\beta,
}
where we used Proposition~\ref{basic_form}(5).

For (3),
\eq{
    e^i\w S_{e_i} &= e^i\w ( \nabla_{e_i}\beta - P_{e_i} -Q_{e_i} ) \\
    &= \rd\beta - e^i\w P_{e_i} - e^i\w Q_{e_i} \\
    &= \frac{n+1}{2}f^{\frac{2}{n-1}}*\beta - \rd(\log f)\w\beta + \rd(\log f)\w\beta - \frac{n+1}{2}f^{\frac{2}{n-1}}*\beta \\
    &= 0,
}
where we used Proposition~\ref{basic_form}(4) and Lemma~\ref{lem4.1}(1).

For (4),
\eq{
    \frac{n-1}{2} e_i\ip P_{e_i} &= e_i\ip \rd(\log f)\w e_i\ip\beta - e_i\ip e^i\w\nabla(\log f)\ip\beta \\
    &= e_i(\log f)e_i\ip\beta - \rd(\log f)\w e_i\ip e_i\ip\beta - \frac{n+3}{2}\nabla(\log f)\ip\beta \\
    &= -\frac{n+1}{2} \nabla(\log f)\ip\beta,
}
where we used Proposition~\ref{basic_form}(6).

For (5),
\eq{
    e_i\ip Q_{e_i} = f^{\frac{2}{n-1}} e_i\ip e_i\ip*\beta = 0.
}

For (6),
\eq{
    e_i\ip S_{e_i} = e_i\ip\nabla_{e_i}\beta - e_i\ip P_{e_i} - e_i\ip Q_{e_i} = -\rd^*\beta - e_i\ip P_{e_i} = 0,
}
where we used (4) and Lemma~\ref{lem4.1}(2).

\end{proof}

As expected, we show that the decomposition \eqref{eq:decomposition} is orthogonal.

\begin{lemma}\label{lem4.3}
The tensors $P,Q,S$ are pairwise orthogonal with respect to the pointwise inner product, i.e.
\eq{
    \< P,Q\>=\< P,S\>=\< Q,S\>=0.
}
\end{lemma}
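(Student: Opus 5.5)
The plan is to exploit the fact that $P$ and $Q$ are built from the images of the two "trace maps" appearing in Lemma~\ref{lem4.2}, while $S$ lies in the common kernel of both. For a $T^*\S^n$-valued $\frac{n-1}{2}$-form $T$, the pointwise inner product is $\<T,T'\>=\sum_i\<T_{e_i},T'_{e_i}\>$. I introduce two maps. The first is $\mathcal{A}:\omega\mapsto (X\mapsto X\ip\omega)$, defined on $\frac{n+1}{2}$-forms. The second is $\mathcal{B}:\eta\mapsto(X\mapsto X^\flat\w\eta)$, defined on $\frac{n-3}{2}$-forms. By \eqref{direct_dual}, summing over a frame gives
\[
\<\mathcal{A}\omega,T\>=\<\omega,e^i\w T_{e_i}\>,\qquad \<\mathcal{B}\eta,T\>=\<\eta,e_i\ip T_{e_i}\>,
\]
so $\mathcal{A},\mathcal{B}$ are adjoint to the antisymmetric-trace and trace maps. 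By Lemma~\ref{lem4.2}(3),(6), $S$ is orthogonal to the images of both $\mathcal{A}$ and $\mathcal{B}$.

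\emph{Step 1: $\<Q,S\>=0$.} Since $Q=\mathcal{A}(f^{\frac{2}{n-1}}*\beta)$, we get $\<Q,S\>=f^{\frac{2}{n-1}}\<*\beta,e^i\w S_{e_i}\>=0$ by Lemma~\ref{lem4.2}(3).

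\emph{Step 2: $\<P,Q\>=0$.} Using the same adjointness together with Lemma~\ref{lem4.2}(1), we get $\<P,Q\>=f^{\frac{2}{n-1}}\<e^i\w P_{e_i},*\beta\>=-f^{\frac{2}{n-1}}\<\theta\w\beta,*\beta\>$, where $\theta=\rd(\log f)$. Since $*$ is an isometry with $*^2=\rid$, and by Proposition~\ref{basic_form}(2), we have $\<\theta\w\beta,*\beta\>=\<*(\theta\w\beta),\beta\>=-\<\theta^\sharp\ip*\beta,\beta\>$. This vanishes by Proposition~\ref{basic_form}(3).

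\emph{Step 3: $\<P,S\>=0$.} This is the main obstacle, because the first term of $P$, namely $\theta\w X\ip\beta$, is not directly in the image of $\mathcal{A}$ or $\mathcal{B}$. I would rewrite it with Proposition~\ref{basic_form}(6) as $\theta\w X\ip\beta=\theta(X)\beta-X\ip(\theta\w\beta)$. Then
\[
P=\tfrac{2}{n-1}\Big(\theta\otimes\beta-\mathcal{A}(\theta\w\beta)-\mathcal{B}(\nabla(\log f)\ip\beta)\Big).
\]
The $\mathcal{A}$- and $\mathcal{B}$-terms are orthogonal to $S$ as before. The remaining term gives $\<\theta\otimes\beta,S\>=\<\beta,S_{\theta^\sharp}\>$, which I compute from the definition $S_{\theta^\sharp}=\nabla_{\theta^\sharp}\beta-P_{\theta^\sharp}-Q_{\theta^\sharp}$:
\begin{itemize}
\item $\<\beta,\nabla_{\theta^\sharp}\beta\>=\frac12\theta^\sharp\abs{\beta}^2=0$, because $\abs{\beta}\equiv1$;
\item $P_{\theta^\sharp}=\frac{2}{n-1}\big(\theta\w\theta^\sharp\ip\beta-\theta\w\theta^\sharp\ip\beta\big)=0$;
\item $\<\beta,Q_{\theta^\sharp}\>=f^{\frac{2}{n-1}}\<\beta,\theta^\sharp\ip*\beta\>=0$, by Proposition~\ref{basic_form}(3).
\end{itemize}
Hence $\<P,S\>=0$, completing the proof. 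All computations take place on $\{f>0\}$, as in Remark~\ref{rem_zero}.
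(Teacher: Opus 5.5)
Your proposal is correct and is essentially the paper's proof: you use the same adjointness via \eqref{direct_dual} combined with Lemma~\ref{lem4.2}(1),(3),(6), and the same splitting $\rd(\log f)\w X\ip\beta=X(\log f)\beta-X\ip(\rd(\log f)\w\beta)$ to handle $\<P,S\>$. The differences are cosmetic: you dispose of $\<\rd(\log f)\w\beta,*\beta\>$ via Proposition~\ref{basic_form}(2),(3) rather than via $\beta\w\beta=0$, and you observe $P_{\nabla(\log f)}=0$ directly instead of checking $\<\beta,P_{e_i}\>=0$ for each $i$.
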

\begin{proof}
Let $\{e_i\}$ be a local frame. First, using Lemma~\ref{lem4.2}(1),
\eq{
    \<P,Q\> &= \<P_{e_i},Q_{e_i}\> = f^{\frac{2}{n-1}}\<P_{e_i}, e_i\ip*\beta\> = f^{\frac{2}{n-1}}\<e^i\w P_{e_i}, *\beta\> \\
    &= -f^{\frac{2}{n-1}}\<\rd(\log f)\w\beta,*\beta\> = 0,
}
where we used \eqref{eq:def_Hodge_star}, \eqref{direct_dual}, and the fact that $\deg\beta=\frac{n-1}{2}$ is odd, hence $\beta\w\beta=0$.

Second, using Lemma~\ref{lem4.2}(3),
\eq{
    \<S,Q\> = \<S_{e_i},Q_{e_i}\> = f^{\frac{2}{n-1}}\<S_{e_i}, e_i\ip*\beta\> = f^{\frac{2}{n-1}}\<e^i\w S_{e_i}, *\beta\> = 0,
}
where we used \eqref{direct_dual}.

Third, we show that $\<P,S\>=0$. Note that
\eq{
    \frac{n-1}{2}\<P,S\> = \<P_{e_i},S_{e_i}\> = \< \rd(\log f)\w e_i\ip\beta, S_{e_i}\> - \<e^i\w\nabla(\log f)\ip\beta, S_{e_i}\>. 
}
The first term is
\eq{
    \< \rd(\log f)\w e_i\ip\beta, S_{e_i}\> &= \<e_i(\log f)\beta - e_i\ip\rd(\log f)\w\beta, S_{e_i}\> \\
    &= e_i(\log f)\<\beta,S_{e_i}\> - \<\rd(\log f)\w\beta, e^i\w S_{e_i}\> \\
    &= e_i(\log f)\<\beta,S_{e_i}\> \\
    &= e_i(\log f)\<\beta,\nabla_{e_i}\beta - P_{e_i} - Q_{e_i}\>,
}
where we used \eqref{direct_dual}, Proposition~\ref{basic_form}(6), and Lemma~\ref{lem4.2}(3). Since $\abs{\beta}=1$, we have
\eq{
    \<\beta,\nabla_{e_i}\beta\> = 0.
}
Moreover, 
\eq{
    \frac{n-1}{2}\<\beta,P_{e_i}\> &= \<\beta,\rd(\log f)\w e_i\ip\beta\> - \<\beta,e^i\w\nabla(\log f)\ip\beta\> \\
    &= \<\nabla(\log f)\ip\beta,e_i\ip\beta\> - \<e_i\ip\beta,\nabla(\log f)\ip\beta\> = 0,
}
where we used \eqref{direct_dual}; and
\eq{
    \<\beta,Q_{e_i}\> &= f^{\frac{2}{n-1}}\<\beta,e_i\ip*\beta\> = 0.
}
Hence the first term vanishes:
\eq{
    \< \rd(\log f)\w e_i\ip\beta, S_{e_i}\> = 0.
}
The second term is
\eq{
    \<e^i\w\nabla(\log f)\ip\beta, S_{e_i}\> = \< \nabla(\log f)\ip\beta, e_i\ip S_{e_i}\> = 0,
}
where we used Lemma~\ref{lem4.2}(6). Therefore
\eq{
    \<P,S\>=0.
}
We complete the proof.
\end{proof}

\begin{lemma}\label{lem4.4}
We have
\begin{enumerate}
    \item $\abs{P}^2 = \frac{2}{n-1}\abs{\nabla(\log f)}^2+\frac{4}{(n-1)^2}\abs{\nabla(\log f)\ip\beta}^2$;
    \item $\abs{Q}^2=\frac{n+1}{2}f^{\frac{4}{n-1}}$.
\end{enumerate}
\end{lemma}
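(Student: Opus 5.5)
The plan is a direct pointwise computation in a local orthonormal frame $\{e_i\}$ at a point of $\{f>0\}$, using only $\abs{\beta}=1$ and the algebraic identities of Proposition~\ref{basic_form}. Throughout, write $k\coloneqq\frac{n-1}{2}=\deg\beta$, $v\coloneqq\nabla(\log f)$ and $\gamma\coloneqq v\ip\beta$, so $\deg\gamma=k-1$. The one general fact needed is the counting identity behind Proposition~\ref{basic_form}(5), used in its squared form: for any $q$-form $\eta$,
\[
\sum_i\abs{e_i\ip\eta}^2=\<\eta,\textstyle\sum_i e^i\w e_i\ip\eta\>=q\abs{\eta}^2,\qquad \sum_i\abs{e^i\w\eta}^2=(n-q)\abs{\eta}^2 .
\]

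\emph{Part (2).} Since $\abs{Q}^2=f^{\frac{4}{n-1}}\sum_i\abs{e_i\ip*\beta}^2$, and $*\beta$ has degree $n-k=\frac{n+1}{2}$ with $\abs{*\beta}=\abs{\beta}=1$, the identity above gives $\abs{Q}^2=\frac{n+1}{2}f^{\frac{4}{n-1}}$ immediately.

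\emph{Part (1).} Expand
\[
k^2\abs{P}^2=\sum_i\abs{v^\flat\w e_i\ip\beta}^2+\sum_i\abs{e^i\w\gamma}^2-2\sum_i\<v^\flat\w e_i\ip\beta,\,e^i\w\gamma\>
\]
and evaluate the three sums separately.

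For the first sum, apply Proposition~\ref{basic_form}(1) with $X=v$ to each $e_i\ip\beta$. This gives $\abs{v}^2\abs{e_i\ip\beta}^2-\abs{v\ip e_i\ip\beta}^2$. Anticommutativity of interior products gives $\sum_i\abs{v\ip e_i\ip\beta}^2=\sum_i\abs{e_i\ip\gamma}^2=(k-1)\abs{\gamma}^2$. So the first sum equals $k\abs{v}^2-(k-1)\abs{\gamma}^2$.

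The second sum is $(n-k+1)\abs{\gamma}^2=\frac{n+3}{2}\abs{\gamma}^2$.

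For the cross term, move $e^i\w$ across by \eqref{direct_dual}. Then use Proposition~\ref{basic_form}(6) in the form $e_i\ip(v^\flat\w\eta)=v(e_i)\eta-v^\flat\w e_i\ip\eta$. Together with $\sum_i e_i\ip e_i\ip\beta=0$, this yields $\sum_i e_i\ip(v^\flat\w e_i\ip\beta)=\gamma$. Hence the cross term contributes $-2\abs{\gamma}^2$.

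Adding the three sums, the coefficient of $\abs{\gamma}^2$ is $-(k-1)+\frac{n+3}{2}-2=1$. Therefore $k^2\abs{P}^2=k\abs{v}^2+\abs{\gamma}^2$, and dividing by $k^2$ gives the claimed formula $\abs{P}^2=\frac{2}{n-1}\abs{v}^2+\frac{4}{(n-1)^2}\abs{\gamma}^2$.

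The only delicate point is sign bookkeeping in the cross term, specifically the commutation of $e_i\ip$ past $v^\flat\w$. A useful check is that the computation must reproduce Lemma~\ref{lem4.2}(4) when traced, so I would verify signs against that.
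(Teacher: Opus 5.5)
Your proposal is correct and follows the paper's proof essentially line by line. It uses the same expansion of $\frac{(n-1)^2}{4}\abs{P}^2$ into three sums, evaluates the second sum ($\frac{n+3}{2}\abs{\nabla(\log f)\ip\beta}^2$) and the cross term (via Proposition~\ref{basic_form}(6)) in the same way, and gives the same one-line counting argument for (2). The only cosmetic difference is in the first sum: you apply Proposition~\ref{basic_form}(1) to each $(k-1)$-form $e_i\ip\beta$, whereas the paper first writes $\rd(\log f)\w e_i\ip\beta=e_i(\log f)\beta-e_i\ip(\rd(\log f)\w\beta)$ and then applies Proposition~\ref{basic_form}(1) to $\beta$ itself. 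Both routes give $\frac{n-1}{2}\abs{\nabla(\log f)}^2-\frac{n-3}{2}\abs{\nabla(\log f)\ip\beta}^2$.
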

\begin{proof}
(1) First,
\eq{
    \sum_{i=1}^n\abs{\rd(\log f)\w e_i\ip\beta}^2 &= \sum_{i=1}^n\abs{e_i(\log f)\beta - e_i\ip\rd(\log f)\w\beta}^2 \\
    &= \abs{\nabla(\log f)}^2 + \frac{n+1}{2}\abs{\rd(\log f)\w\beta}^2 - 2\<\beta,\nabla(\log f)\ip\rd(\log f)\w\beta\> \\
    &= \abs{\nabla(\log f)}^2 + \frac{n-3}{2}\abs{\rd(\log f)\w\beta}^2,
}
where we used \eqref{direct_dual} and Proposition~\ref{basic_form}(5).
Second,
\eq{
    \sum_{i=1}^n\abs{e^i\w\nabla(\log f)\ip\beta}^2 = \frac{n+3}{2}\abs{\nabla(\log f)\ip\beta}^2,
}
where we used \eqref{direct_dual} and Proposition~\ref{basic_form}(5).
Third,
\eq{
    \<\rd(\log f)\w e_i\ip\beta, e^i\w\nabla(\log f)\ip\beta\> &= \<e_i(\log f)\beta - e_i\ip\rd(\log f)\w\beta,e^i\w\nabla(\log f)\ip\beta\> \\
    &= \abs{\nabla(\log f)\ip\beta}^2,
}
where we used \eqref{direct_dual} and Proposition~\ref{basic_form}(5)(6).
Hence
\eq{
    \frac{(n-1)^2}{4}\abs{P}^2 &= \frac{(n-1)^2}{4}\<P_{e_i},P_{e_i}\> \\
    &= \sum_{i=1}^n\abs{\rd(\log f)\w e_i\ip\beta}^2 + \sum_{i=1}^n\abs{e^i\w\nabla(\log f)\ip\beta}^2 - 2\<\rd(\log f)\w e_i\ip\beta, e^i\w\nabla(\log f)\ip\beta\> \\
    &= \sum_{i=1}^n\abs{e_i(\log f)\beta - e_i\ip\rd(\log f)\w\beta}^2 + \frac{n+3}{2}\abs{\nabla(\log f)\ip\beta}^2 - 2\<e_i\ip\rd(\log f)\w e_i\ip\beta, \nabla(\log f)\ip\beta\> \\
    &= \abs{\nabla(\log f)}^2 + \frac{n-3}{2}\abs{\rd(\log f)\w\beta}^2 + \frac{n+3}{2}\abs{\nabla(\log f)\ip\beta}^2 - 2\abs{\nabla(\log f)\ip\beta}^2 \\
    &= \abs{\nabla(\log f)}^2 + \frac{n-3}{2}\Big(\abs{\nabla(\log f)}^2-\abs{\nabla(\log f)\ip\beta}^2\Big) + \frac{n-1}{2}\abs{\nabla(\log f)\ip\beta}^2 \\
    &= \frac{n-1}{2}\abs{\nabla(\log f)}^2 + \abs{\nabla(\log f)\ip\beta}^2,
}
where we used \eqref{direct_dual} and Proposition~\ref{basic_form}(1)(5)(6).
The claim follows.

(2) Using \eqref{direct_dual} and Proposition~\ref{basic_form}(5) we have
\eq{
    \abs{Q}^2 = f^{\frac{4}{n-1}}\sum_{i=1}^n\abs{e_i\ip*\beta}^2 = \frac{n+1}{2}f^{\frac{4}{n-1}},
}
the claim.
\end{proof}

\begin{corollary}\label{cor4.5}
We have
\eq{
    \abs{\nabla\beta}^2 = \frac{2}{n-1}\abs{\nabla(\log f)}^2+\frac{4}{(n-1)^2}\abs{\nabla(\log f)\ip\beta}^2 + \frac{n+1}{2}f^{\frac{4}{n-1}} + \abs{S}^2.
}
\end{corollary}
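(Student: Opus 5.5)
The identity is an immediate consequence of the orthogonal decomposition \eqref{eq:decomposition} together with Lemmas~\ref{lem4.3} and~\ref{lem4.4}, so the plan is to expand $\abs{\nabla\beta}^2$ and read off each piece.

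First, by \eqref{eq:decomposition} we have $\nabla_X\beta=P_X+Q_X+S_X$ for every vector field $X$. Taking the pointwise norm of the $T^*\S^n$-valued form $\nabla\beta$, i.e. summing $\abs{\nabla_{e_i}\beta}^2$ over a local orthonormal frame $\{e_i\}$, we get
\[
\abs{\nabla\beta}^2=\abs{P}^2+\abs{Q}^2+\abs{S}^2+2\<P,Q\>+2\<P,S\>+2\<Q,S\>.
\]
Lemma~\ref{lem4.3} shows that all three cross terms vanish, so $\abs{\nabla\beta}^2=\abs{P}^2+\abs{Q}^2+\abs{S}^2$.

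Second, we substitute Lemma~\ref{lem4.4}(1) for $\abs{P}^2$, namely $\frac{2}{n-1}\abs{\nabla(\log f)}^2+\frac{4}{(n-1)^2}\abs{\nabla(\log f)\ip\beta}^2$, and Lemma~\ref{lem4.4}(2) for $\abs{Q}^2$, namely $\frac{n+1}{2}f^{\frac{4}{n-1}}$. We keep $\abs{S}^2$ as it is. This is exactly the claimed formula.

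There is no real obstacle, since all the computational work sits in Lemmas~\ref{lem4.3} and~\ref{lem4.4}. The only point to check is that the norm on $T^*\S^n$-valued forms used there is the same frame sum $\sum_i\<T_{e_i},T_{e_i}\>$ used here; it is, by the way those lemmas were computed. All of this is on $\{f>0\}$, as in Remark~\ref{rem_zero}.
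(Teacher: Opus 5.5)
Your proof is correct and is the paper's argument: expand $\abs{\nabla\beta}^2=\abs{P+Q+S}^2$, kill the cross terms with Lemma~\ref{lem4.3}, and substitute $\abs{P}^2$ and $\abs{Q}^2$ from Lemma~\ref{lem4.4}. Your check that the norm is the frame sum $\sum_i\<T_{e_i},T_{e_i}\>$, and your restriction to $\{f>0\}$, match the conventions used there.
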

\begin{proof}
This is a consequence of Lemma~\ref{lem4.3} and Lemma~\ref{lem4.4}.
\end{proof}

\subsection{A Kato-type identity}\label{sec4.2}

Corollary \ref{cor4.5} gives a Kato-type identity. 
\begin{proposition}\label{prop4.6} 
We have
\eq{
    \abs{\nabla\alpha}^2 = \frac{n+1}{n-1}\abs{\nabla f}^2 + \frac{4}{(n-1)^2}\abs{\nabla f\ip\beta}^2 + \frac{n+1}{2}f^{\frac{2(n+1)}{n-1}} + f^2\abs{S}^2. 
}
In particular,
\eq{
    \label{eq:Kato2}\abs{\nabla\alpha}^2 \geq \frac{n+1}{n-1}\abs{\nabla f}^2 + \frac{4}{(n-1)^2}\abs{\nabla f\ip\beta}^2 + \frac{n+1}{2}f^{\frac{2(n+1)}{n-1}}.
}
\end{proposition}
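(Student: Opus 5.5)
We work on $\{f>0\}$ and write $\alpha=f\beta$ with $\abs{\beta}=1$, so that for every vector field $X$
\[
\nabla_X\alpha = X(f)\,\beta + f\,\nabla_X\beta .
\]
Summing over a local orthonormal frame $\{e_i\}$, we get
\[
\abs{\nabla\alpha}^2=\sum_i\Big(e_i(f)^2\abs{\beta}^2+2f\,e_i(f)\<\beta,\nabla_{e_i}\beta\>+f^2\abs{\nabla_{e_i}\beta}^2\Big).
\]
Since $\abs{\beta}\equiv1$, differentiating gives $\<\beta,\nabla_{e_i}\beta\>=0$, which is also used in the proof of Lemma~\ref{lem4.3}. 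The cross term therefore vanishes, and
\[
\abs{\nabla\alpha}^2=\abs{\nabla f}^2+f^2\abs{\nabla\beta}^2 .
\]

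Next we substitute Corollary~\ref{cor4.5} for $\abs{\nabla\beta}^2$. Using $f^2\abs{\nabla(\log f)}^2=\abs{\nabla f}^2$ and $f^2\abs{\nabla(\log f)\ip\beta}^2=\abs{\nabla f\ip\beta}^2$, the gradient terms combine as
\[
\abs{\nabla f}^2+\frac{2}{n-1}\abs{\nabla f}^2=\frac{n+1}{n-1}\abs{\nabla f}^2,
\]
and the $\beta$-contracted term becomes $\frac{4}{(n-1)^2}\abs{\nabla f\ip\beta}^2$. The zeroth-order term is
\[
f^2\cdot\frac{n+1}{2}f^{\frac{4}{n-1}}=\frac{n+1}{2}f^{\frac{2(n+1)}{n-1}},
\]
and the remainder is $f^2\abs{S}^2$. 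Together these give the stated identity. The inequality \eqref{eq:Kato2} then follows by dropping the nonnegative term $f^2\abs{S}^2$.

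The algebra has no real obstacle once Corollary~\ref{cor4.5} is available. The only point needing care is justification on the zero set of $f$, where $\beta$ and $S$ are undefined. There we rely on Remark~\ref{rem_zero}: $f\in W^{1,2}$, and $\nabla\alpha=0$, $\nabla f=0$ almost everywhere on $\{f=0\}$. Consequently the inequality \eqref{eq:Kato2} holds almost everywhere on $\S^n$ if we interpret $f^2\abs{S}^2$ and $\abs{\nabla f\ip\beta}^2$ as $0$ on that set.
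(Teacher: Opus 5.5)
Your proposal is correct and follows the paper's own proof: expand $\nabla\alpha=f\nabla\beta+\nabla f\otimes\beta$, use $\langle\beta,\nabla_{e_i}\beta\rangle=0$ (which the paper uses without comment) to get $\abs{\nabla\alpha}^2=\abs{\nabla f}^2+f^2\abs{\nabla\beta}^2$, and then substitute Corollary~\ref{cor4.5}. Your remark on the zero set $\{f=0\}$ matches the paper's treatment in Remarks~\ref{rem_zero} and~\ref{rem4.8}.
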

\begin{proof}
Since
\eq{
    \nabla\alpha = \nabla(f\beta) = f\nabla\beta + \nabla f\otimes\beta,
}
we have
\eq{
    \abs{\nabla\alpha}^2 = f^2\abs{\nabla\beta}^2 + \abs{\nabla f}^2.
}
The claim then follows from Corollary~\ref{cor4.5}.
\end{proof}

\begin{remark}\label{rem4.8}
We remark that \eqref{eq:Kato2} can be written as
\eq{\label{Kato_ineq}
    \abs{\nabla\alpha}^2 \geq \frac{n+1}{n-1}\abs{\nabla \abs{\alpha}}^2 + \frac{4}{(n-1)^2}\abs{\nabla \abs{\alpha}\ip\frac{\alpha}{\abs{\alpha}}}^2 + \frac{n+1}{2}\abs{\alpha}^{\frac{2(n+1)}{n-1}},
}
a Kato-type inequality. Note that by Lemma \ref{lem4.1}
\eq{
\nabla |\alpha|\ip \frac {\alpha}{\abs{\alpha}}=\nabla f\ip \beta=\frac {n-1} 2 \rd^*\alpha,
}
and hence \eqref{Kato_ineq} holds on $\S^n$.
\end{remark}

Now we use the Bochner--Weitzenb\"ock formula to show 
\begin{lemma}\label{thm4.7}
We have
\eq{\label{eq:27}
    \frac{(n-1)^2}{4} \int_{\S^n} f^{\frac{2(n+1)}{n-1}} \geq \int_{\S^n}\abs{\nabla f}^2 + \frac{(n-1)^2}{4}\int_{\S^n} f^2.
}
\end{lemma}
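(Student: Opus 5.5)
We prove \eqref{eq:27} by integrating the Weitzenb\"ock formula \eqref{sphere_Weitzenbock} against $\alpha$, evaluating the Hodge energy through the Euler--Lagrange equation \eqref{eq:EL-curl}, and bounding the rough energy from below with the Kato inequality \eqref{eq:Kato2}.

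\textbf{Step 1: the Weitzenb\"ock identity.} For $p=\frac{n-1}{2}$ we have $p(n-p)=\frac{(n-1)(n+1)}{4}$. Integrating \eqref{sphere_Weitzenbock} against $\alpha$ over $\S^n$ gives
\[
\int_{\S^n}\big(\abs{\rd\alpha}^2+\abs{\rd^*\alpha}^2\big)=\int_{\S^n}\abs{\nabla\alpha}^2+\frac{n^2-1}{4}\int_{\S^n}f^2 .
\]
This is justified by the regularity $\alpha\in W^{1,2}$ together with Remark~\ref{rem_zero}.

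\textbf{Step 2: evaluate the left-hand side.} Since $*$ is an isometry, \eqref{eq:EL-curl} gives
\[
\abs{\rd\alpha}^2=\abs{\curl\alpha}^2=\frac{(n+1)^2}{4}f^{\frac{2(n+1)}{n-1}} .
\]
By Lemma~\ref{lem4.1}(3),
\[
\abs{\rd^*\alpha}^2=\frac{4}{(n-1)^2}\abs{\nabla f\ip\beta}^2 .
\]
So the left-hand side equals $\frac{(n+1)^2}{4}\int f^{\frac{2(n+1)}{n-1}}+\frac{4}{(n-1)^2}\int\abs{\nabla f\ip\beta}^2$.

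\textbf{Step 3: bound the right-hand side.} Insert the Kato inequality \eqref{eq:Kato2} for $\abs{\nabla\alpha}^2$. The term $\frac{4}{(n-1)^2}\abs{\nabla f\ip\beta}^2$ appears on both sides and cancels exactly. This exact cancellation is the reason the decomposition in Lemma~\ref{lem4.4} keeps that term. What remains is
\[
\Big(\frac{(n+1)^2}{4}-\frac{n+1}{2}\Big)\int f^{\frac{2(n+1)}{n-1}}\ge\frac{n+1}{n-1}\int\abs{\nabla f}^2+\frac{n^2-1}{4}\int f^2 .
\]
The coefficient on the left is $\frac{(n+1)(n-1)}{4}$. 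Multiplying through by $\frac{n-1}{n+1}$ gives exactly \eqref{eq:27}.

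\textbf{Expected obstacle.} The only delicate point is justifying the integration by parts in Step 1 on all of $\S^n$. The difficulty is that $\beta$ is defined only on $\{f>0\}$. Steps 2 and 3, however, are pointwise identities in $\alpha$, $\nabla\alpha$, $f$ and $\nabla f$, and these extend almost everywhere by Remarks~\ref{rem_zero} and~\ref{rem4.8}; in particular $\nabla f\ip\beta=\frac{n-1}{2}\rd^*\alpha$. The Weitzenb\"ock identity itself is a global statement about the $W^{1,2}$ form $\alpha$, so it holds by approximation. The remaining work is to check the sign conventions in $\abs{\rd\alpha}=\abs{*\rd\alpha}$.
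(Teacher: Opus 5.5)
Your proposal is correct and follows the paper's proof exactly: integrate the Weitzenb\"ock formula for $\frac{n-1}{2}$-forms, evaluate $\int\abs{\rd\alpha}^2$ via the Euler--Lagrange equation and $\int\abs{\rd^*\alpha}^2$ via Lemma~\ref{lem4.1}(3), then insert the Kato inequality \eqref{eq:Kato2}, where the $\abs{\nabla f\ip\beta}^2$ terms cancel. Your arithmetic, namely $\frac{(n+1)^2}{4}-\frac{n+1}{2}=\frac{(n+1)(n-1)}{4}$ followed by multiplying through by $\frac{n-1}{n+1}$, checks out. Your handling of the zero set matches Remarks~\ref{rem_zero} and~\ref{rem4.8}.
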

\begin{proof}
Recall the Bochner--Weitzenb\"ock formula for $\frac{n-1}{2}$-forms on $\S^n$:
\eq{\label{eq:16}
    \int_{\S^n} \abs{\rd\alpha}^2 + \int_{\S^n} \abs{\rd^*\alpha}^2 = \int_{\S^n} \abs{\nabla\alpha}^2 + \frac{(n+1)(n-1)}{4}\int_{\S^n} \abs{\alpha}^2.
}
By the Euler--Lagrange equation \eqref{eq:EL-curl} we have
\eq{\label{eq:17}
    \int_{\S^n} \abs{\rd\alpha}^2 = \frac{(n+1)^2}{4} \int_{\S^n} f^{\frac{2(n+1)}{n-1}}.
}
By Lemma~\ref{lem4.1}(3) we have
\eq{\label{eq:18}
    \int_{\S^n} \abs{\rd^*\alpha}^2 = \frac{4}{(n-1)^2}\int_{\S^n} \abs{\nabla f\ip\beta}^2.
}
By Proposition~\ref{prop4.6} we have
\eq{\label{eq:19}
    \int_{\S^n} \abs{\nabla\alpha}^2 \geq \frac{n+1}{n-1}\int_{\S^n}\abs{\nabla f}^2 + \frac{4}{(n-1)^2}\int_{\S^n}\abs{\nabla f\ip\beta}^2 + \frac{n+1}{2}\int_{\S^n}f^{\frac{2(n+1)}{n-1}}.
}
The claim follows from \eqref{eq:16}, \eqref{eq:17}, \eqref{eq:18}, and \eqref{eq:19}.
\end{proof}

\subsection{Proof of the main theorem}
We now prove Theorem~\ref{main_thm}.

\medskip

\begin{proof}[Proof of Theorem \ref{main_thm}] 
By Theorem~\ref{thm:GN-Sn} and Lemma~\ref{thm4.7}, we obtain
\eq{\label{eq:29}
    \int_{\S^n} \abs{\alpha}^{\frac{2n}{n-1}} = \int_{\S^n} f^{\frac{2n}{n-1}} \geq \omega_n,
}
which is exactly \eqref{main_ineq} and hence proves the desired inequality.

We now characterize the equality case. We claim the following conformal covariance of $S$:
\eq{\label{eq:28}
    \abs{\alpha}_{\tilde{g}}=1, \quad \tilde{S}_X = fS_X, \quad \hbox{ for }\tilde{g}=f^{\frac{4}{n-1}}g, \quad g=g_{{\rm st}}.
}

\begin{proof}[ Proof of the Claim.]
Let $\rho\coloneqq f^{\frac{2}{n-1}}$, then $\tilde{g}=\rho^2g$. Using \eqref{conformal_change} we have
\eq{
    \abs{\alpha}_{\tilde{g}} = \rho^{-\frac{n-1}{2}}\abs{\alpha}_g = f^{-1}\abs{\alpha}_g = 1.
}
Therefore, with respect to $\tilde{g}$, we have
\eq{
    \tilde{f}=1, \quad \tilde{\beta}=\alpha, \quad \tilde{P}=0, \quad \tilde{Q}_X=X\ip\tilde{*}\alpha.
}
Consequently,
\eq{\label{eq:29.4}
    \tilde{S}_X = \tilde{\nabla}_X\alpha - X\ip\tilde{*}\alpha.
}
Using the Koszul formula for $\frac{n-1}{2}$-forms, we have
\eq{\label{eq:29.5}
    \tilde{\nabla}_X\alpha = \nabla_X\alpha - \frac{n-1}{2}X(\log\rho)\alpha - \rd(\log\rho)\w X\ip\alpha + X^\flat\w\nabla(\log\rho)\ip\alpha.
}
Since $\alpha=f\beta$, we have
\eq{\label{eq:30}
    f\nabla_X\beta = \nabla_X\alpha - X(f)\beta = \nabla_X\alpha - \frac{n-1}{2}X(\log\rho)\alpha.
}
Moreover, by definition of $P$ and $Q$ we have
\eq{\label{eq:31}
    fP_X = \rd(\log\rho)\w X\ip\alpha - X^\flat\w\nabla(\log\rho)\ip\alpha,
}
and by \eqref{conformal_change}
\eq{\label{eq:32}
    fQ_X = \rho X\ip*\alpha = X\ip\tilde{*}\alpha.
}
Combining \eqref{eq:29.4}, \eqref{eq:29.5}, \eqref{eq:30}, \eqref{eq:31}, and \eqref{eq:32} yields
\eq{
    fS_X &= f\nabla_X\beta - fP_X - fQ_X \\
    &= \nabla_X\alpha - \frac{n-1}{2}X(\log\rho)\alpha - \rd(\log\rho)\w X\ip\alpha + X^\flat\w\nabla(\log\rho)\ip\alpha - X\ip\tilde{*}\alpha \\
    &= \tilde{\nabla}_X\alpha - X\ip\tilde{*}\alpha = \tilde{S}_X,
}
the claim.
\end{proof}

Now continue to consider the equality case.
Equality in \eqref{eq:29} holds if and only if equality holds in both Theorem~\ref{thm:GN-Sn} and Lemma~\ref{thm4.7}. Hence, up to a conformal transformation, we may assume
\eq{
    \abs{\alpha}=1, \qquad S=0.
}
Here we used the conformal covariance \eqref{eq:28}. With respect to this conformal metric we then have
\eq{
    0 = S_X = \nabla_X\alpha - X\ip*\alpha.
}
Taking  wedge and trace and using Proposition~\ref{basic_form}(5) yields
\eq{
    \rd\alpha = e^i\w\nabla_{e_i}\alpha = e^i\w e_i\ip*\alpha = \frac{n+1}{2}*\alpha.
}
Thus $\alpha$ is a positive $\frac{n-1}{2}$-Killing form. The positivity requires the conformal transformation to be orientation-preserving. The converse is immediate.
\end{proof}

\section{Application 1. Optimal metrics on \texorpdfstring{$\S^n$}{Sn}}\label{sec0}

The first application is Theorem~\ref{thm:optimal-metrics-intro}, which is in fact equivalent to our main result.

Let $(M^n,g)$ be a closed oriented Riemannian manifold, and let
\eq{[g]\coloneqq\{\tilde g\mid \tilde g=u^2 g\}}
be its conformal class. Denote by $\lambda_1^+(\tilde g)$ the first positive eigenvalue of the curl operator with respect to $\tilde g\in[g]$, and define the conformal invariant
\eq{
    \mu([g]) \coloneqq \inf_{\tilde g\in[g]} \lambda_1^+(\tilde g)\,\rVol(\tilde g)^{\frac{1}{n}}.
}
It is known that $\mu([g])\in(0,\infty)$; see \cite{Jammes07}. Determining whether the infimum is achieved (and describing optimizers) appears to be difficult; this has remained open for a long time, even for $n=3$ and $(M,g)=(\S^3,g_{\rm st})$. As a consequence of Theorem~\ref{main_thm}, we show
\eq{
    \mu([g_{\mathbb{S}^n}])= \frac{n+1}{2}\,\omega_n^{\frac{1}{n}}.
}

The invariant $\mu([g])$ is closely related to the functional $J(\alpha)$.

\begin{lemma}\label{lem6.1}
\eq{
    \mu([g]) = \inf_{\int \< \curl\alpha, \alpha\> >0} J(\alpha,g).
}
\end{lemma}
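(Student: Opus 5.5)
I prove the two inequalities separately, writing $J(\alpha,g)$ for the quotient computed with respect to $g$. Since $J$ is conformally invariant, $J(\alpha,\tilde g)=J(\alpha,g)$ for every $\tilde g\in[g]$. Hence it suffices to show two things. First, for each $\tilde g$, $\lambda_1^+(\tilde g)\rVol(\tilde g)^{1/n}\ge \inf J(\cdot,\tilde g)$. Second, for each admissible $\alpha$ and each $\varepsilon>0$, some metric in $[g]$ has $\lambda_1^+\rVol^{1/n}\le J(\alpha,g)+\varepsilon$.

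\emph{Inequality $\mu([g])\ge\inf J$.} Fix $\tilde g\in[g]$ and let $\alpha$ be a first positive curl eigenform, $\curl_{\tilde g}\alpha=\lambda\alpha$ with $\lambda=\lambda_1^+(\tilde g)$. Then $\int\<\curl\alpha,\alpha\>=\lambda\int|\alpha|^2>0$, so $\alpha$ is admissible. H\"older's inequality with exponents $\frac{n+1}{n}$ and $n+1$ gives
\[
\int|\curl\alpha|^{\frac{2n}{n+1}}\le \Big(\int|\curl\alpha|^2\Big)^{\frac{n}{n+1}}\rVol(\tilde g)^{\frac1{n+1}} .
\]
It follows that $J(\alpha,\tilde g)\le \lambda^2\int|\alpha|^2\,\rVol^{1/n}/(\lambda\int|\alpha|^2)=\lambda\,\rVol(\tilde g)^{1/n}$. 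Taking the infimum over $\tilde g$ and using conformal invariance proves $\inf J\le\mu([g])$.

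\emph{Inequality $\mu([g])\le\inf J$.} Take any admissible $\alpha$ with respect to $g$. We may assume $\curl\alpha\ne0$, and we seek a metric $\tilde g=\rho^2 g$ in which $|\curl_{\tilde g}\alpha|_{\tilde g}$ is constant. Since $|\curl_{\tilde g}\alpha|_{\tilde g}=\rho^{-1}\rho^{-\frac{n-1}{2}}|\curl_g\alpha|_g=\rho^{-\frac{n+1}{2}}|\curl_g\alpha|_g$, we choose $\rho=|\curl_g\alpha|^{\frac{2}{n+1}}$, regularized as $\rho_\varepsilon=(|\curl_g\alpha|^2+\varepsilon)^{\frac1{n+1}}$ to keep it positive and smooth. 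For $\rho$ itself, $|\curl_{\tilde g}\alpha|\equiv1$, so Hölder becomes an equality:
\[
\Big(\int|\curl\alpha|^{\frac{2n}{n+1}}\Big)^{\frac{n+1}{n}}=\rVol(\tilde g)^{\frac{n+1}{n}}=\rVol(\tilde g)^{\frac1n}\int|\curl\alpha|^2_{\tilde g}\,dV_{\tilde g}.
\]
The variational (Rayleigh) characterization of the first positive eigenvalue of the self-adjoint operator $\curl_{\tilde g}$ then gives the key bound. For admissible $\alpha$ we have
\[
\lambda_1^+(\tilde g)\le \frac{\int|\curl_{\tilde g}\alpha|^2}{\int\<\curl_{\tilde g}\alpha,\alpha\>}.
\]
To justify this, expand $\alpha$ in eigenforms modulo $\ker\rd$, giving $\sum\lambda_k^2 c_k^2/\sum\lambda_k c_k^2$. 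Negative eigenvalues decrease the denominator, and for the positive ones $\lambda_k^2\ge\lambda_1^+\lambda_k$. Combining these yields $\lambda_1^+(\tilde g)\rVol(\tilde g)^{1/n}\le J(\alpha,g)$, and letting $\varepsilon\to0$ handles the regularization.

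\textbf{Main obstacle.} The hardest step is the degenerate conformal factor, since $\curl\alpha$ may vanish. I would handle it with $\rho_\varepsilon$: the Hölder defect tends to zero by dominated convergence, and the Rayleigh bound holds for each $\varepsilon$. Alternatively, one may define $\mu$ allowing $W^{1,p}$-type or degenerate metrics, as the paper's "suitable Sobolev spaces" convention permits.
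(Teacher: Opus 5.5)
Your proposal is correct and follows the paper's proof essentially step for step. For $\inf J\le\mu([g])$ you apply H\"older to a first positive eigenform. For the reverse inequality you use the conformal factor $|\curl_g\alpha|^{\frac{2}{n+1}}$, regularized where $\curl\alpha$ vanishes, together with the Rayleigh bound $\lambda_1^+(\tilde g)\le\int|\curl_{\tilde g}\alpha|^2/\int\<\curl_{\tilde g}\alpha,\alpha\>$. Your spectral justification of that bound and your smooth regularization $(|\curl_g\alpha|^2+\varepsilon)^{\frac{1}{n+1}}$ only make explicit details that the paper leaves implicit.
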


\begin{proof}
The argument is standard, see \cite{EGP25} for $n=3$.  We include it for completeness.

Fix $\tilde g\in[g]$, and let $\lambda_1^+(\tilde g)$ and $\alpha_1$ be the first positive eigenvalue and a corresponding eigenform of the curl operator. Using the conformal invariance of $J$ and H\"older's inequality, we have
\eq{
    \inf J(\alpha,g)
    = \inf J(\alpha,\tilde g)
    \le J(\alpha_1,\tilde g)
    = \lambda_1^+(\tilde g)\,
    \frac{\bigl(\int \abs{\alpha_1}_{\tilde g}^{\frac{2n}{n+1}}\,\rdV_{\tilde g}\bigr)^{\frac{n+1}{n}}}{\int \abs{\alpha_1}_{\tilde g}^2\,\rdV_{\tilde g}}
    \le \lambda_1^+(\tilde g)\,\rVol(\tilde g)^{\frac{1}{n}}.
}
Taking the infimum over $\tilde g\in[g]$ gives $\inf J(\alpha,g)\le \mu([g])$.

For the reverse inequality, fix $\varepsilon>0$ and choose $\alpha_\varepsilon$ with $\int\<\curl\alpha_\varepsilon,\alpha_\varepsilon\> >0$ such that
\eq{
    J(\alpha_\varepsilon,g) \le \inf J(\alpha,g) + \varepsilon.
}
After scaling, we may assume
\eq{
    \int \abs{\curl_g\alpha_\varepsilon}_g^{\frac{2n}{n+1}}\rdV_g = 1.
}
Define $u\coloneqq \abs{\curl_g\alpha_\varepsilon}_g^{\frac{2}{n+1}}$ (assuming for the moment that $\curl_g\alpha_\varepsilon$ is nowhere vanishing) and set $\tilde g\coloneqq u^2 g$. Then
\eq{
    \rVol(\tilde g)=\int u^n\,\rdV_g = \int \abs{\curl_g\alpha_\varepsilon}_g^{\frac{2n}{n+1}}\,\rdV_g = 1.
}
Using the conformal covariance \eqref{conformal_change} we have
\eq{
    \Big(\int \abs{\curl_{\tilde g}\alpha_\varepsilon}_{\tilde g}^{\frac{2n}{n+1}}\rdV_{\tilde g}\Big)^{\frac{n+1}{n}}
    = 1 = \int \abs{\curl_{\tilde g}\alpha_\varepsilon}_{\tilde g}^2\,\rdV_{\tilde g}.
}
Therefore,
\eq{
    \inf J(\alpha,g) + \varepsilon
    \ge J(\alpha_\varepsilon,g)
    = J(\alpha_\varepsilon,\tilde g)
    = \frac{\int \abs{\curl_{\tilde g}\alpha_\varepsilon}_{\tilde g}^2\,\rdV_{\tilde g}}{\int \<\curl_{\tilde g}\alpha_\varepsilon,\alpha_\varepsilon\>_{\tilde g}\,\rdV_{\tilde g}}
    \ge \lambda_1^+(\tilde g)
    = \lambda_1^+(\tilde g)\,\rVol(\tilde g)^{\frac{1}{n}}
    \ge \mu([g]).
}
Letting $\varepsilon\to 0$ yields $\inf J(\alpha,g)\ge \mu([g])$.

If $u$ has zeros, one can instead take $u_\varepsilon\coloneqq \sqrt{u^2+\varepsilon^2}$ and run the same argument.
\end{proof}

A conformal metric $\tilde g\in[g]$ achieving $\mu([g])$ is called an \emph{optimizer}. There are interesting results on closed $3$-manifolds \cite{EGP25} (which proves local optimality in the $C^0$ topology) and on $3$-manifolds with boundary (where one also needs boundary conditions); see \cites{CDGT00,EGP22}. However, for $(M,g)=(\S^3,g_{\rm st})$ the existence and classification of optimizers has been open for a long time. As a consequence of Theorem~\ref{main_thm}, we now give an affirmative answer on the sphere and prove Theorem~\ref{thm:optimal-metrics-intro}.

\begin{proof}[Proof of Theorem~\ref{thm:optimal-metrics-intro}]
This is equivalent to our main theorem. Indeed, by Lemma~\ref{lem6.1} and Theorem~\ref{main_thm},
\eq{
    \mu([g_{{\rm st}}])= \frac{n+1}{2}\,\omega_n^{\frac{1}{n}}.
}
For the standard round metric one has
\[
\lambda_1^+(g_{{\rm st}})=\frac{n+1}{2}.
\]
Therefore $g_{{\rm st}}$ achieves the infimum $\mu([g_{{\rm st}}])$, hence is optimal. Uniqueness follows from the equality case of Theorem~\ref{main_thm}.
\end{proof}

Theorem~\ref{thm:optimal-metrics-intro} is a Hersch-type theorem \cite{Hersch70}, or much closer, an El~Soufi--Ilias type theorem \cite{ElSoufiIlias86}. Namely, Theorem~\ref{thm:optimal-metrics-intro} is equivalent to:

\smallskip

{\it Any metric $\tilde{g}\in[g_{{\rm st}}]$ with the given volume $\omega_n$ has the first positive eigenvalue \eq{ \lambda^+_1(\tilde{g})
\ge \frac{n+1} 2,}
with equality if and only if $\tilde{g}=g_{{\rm st}}$.}

\smallskip

For recent developments on the eigenvalues of  the $\curl$ operator, see \cites{B19,Montiel23IsoCurl,EGP25,BCV26WeylCurl} and the references therein.

\begin{remark}
    Similar to the Dirac operator $\D$ for spinor fields, the curl operator has also infinitely many negative eigenvalues.
    Let $\lambda_1^-(g)$ be the largest negative eigenvalue of $\curl$. Unlike the spinor case, $\abs{\lambda_1^-(g)}=\lambda_1^+(g)$ may not true, since the spectrum of $\curl$ may not be symmetric about $0$.
    Hence one can also consider the minimization problem
    \[
  \mu^-([g]) \coloneqq \inf_{\tilde g\in[g]} \abs{\lambda_1^-(\tilde g)}\,\rVol(\tilde g)^{\frac{1}{n}}.
    \]
    The same argument implies that
    \[
    \mu^-([g_{{\rm st}}])=\frac {n+1} 2 \omega_n^{\frac 1n}.
    \]
    Moreover, $\mu^-([g_{{\rm st}}])$ is achieved by eigenforms $\zeta$ with $\curl \zeta =-\frac {n+1}2 \zeta$ and their conformal changes.
\label{negative}
\end{remark}

Theorem \ref{main_thm} also implies the following result about the first eigenvalue of the Hodge Laplacian on $\frac{n-1} 2$-forms. Let $\lambda_1^{\delta}(g)$ be the first eigenvalue of the Hodge Laplacian
\eq{\rd\rd^*+\rd^*\rd}
in the space
\eq{\{\alpha \,|\, \rd^*\alpha=0\}.}
Due to the commutativity of $\rd^*$ and the Hodge Laplacian, $\lambda_1^{\delta}(g)$ is also an eigenvalue of the Hodge Laplacian. We have

\smallskip

\begin{corollary}
The standard round metric is the unique minimizer of $\lambda_1^{\delta}$ in its conformal class of metrics with the given volume $\omega_n$.
\end{corollary}

For this minimization  problem, see also \cites{ColboisElSoufi06, Jammes07, EGP25}.

\section{Application 2. The Hopf map minimizes the \texorpdfstring{$3$}{3}-energy}\label{app:3-energy}

Let $u:\S^3\to\S^2$ be a smooth map, and let $\omega_{\S^2}$ denote the volume form on $\S^2$. Since
\eq{
    \rd(u^*\omega_{\S^2}) = u^*(\rd\omega_{\S^2}) = 0,
}
the $2$-form $u^*\omega_{\S^2}$ is closed. As $H^2(\S^3)=0$, it is exact, so we may choose a $1$-form $\alpha\in\Omega^1(\S^3)$ such that
\eq{
    u^*\omega_{\S^2} = \rd\alpha.
}
If desired, $\alpha$ can be chosen co-closed by Hodge theory.

The Hopf invariant of $u$ is defined by
\eq{\label{eq:49}
    \int_{\S^3} \alpha\w\rd\alpha.
}
It is well-defined, i.e. independent of the choice of potential: if $\alpha' = \alpha+\rd f$, then
\eq{
    \alpha'\w\rd\alpha' = \alpha\w\rd\alpha + \rd f\w\rd\alpha,
}
and since $\rd f\w\rd\alpha=\rd(f\,\rd\alpha)$ is exact, its integral over the closed manifold $\S^3$ vanishes. It is also well known that \eqref{eq:49} is a homotopy invariant.

\medskip

\noindent\textit{Explicit formula for the Hopf map.}
Identify $\S^3\subset\mathbb{C}^2$ as
\eq{\label{eq:S3-in-C2}
    \S^3=\{(z_1,z_2)\in\mathbb{C}^2:|z_1|^2+|z_2|^2=1\}.
}
Then the (standard) Hopf map $\pi:\S^3\to\S^2\subset\R^3$ is given by
\eq{\label{eq:Hopf-map-explicit}
    \pi(z_1,z_2)=\bigl(2\,\mathrm{Re}(z_1\overline{z_2}),\;2\,\mathrm{Im}(z_1\overline{z_2}),\;|z_1|^2-|z_2|^2\bigr),
}
which indeed satisfies $|\pi(z_1,z_2)|=1$.

\medskip

The Hopf map $\pi:\S^3\to\S^2$ enjoys many special properties. In particular, since $\abs{\rd\pi}$ is constant, it is a critical point of the $p$-energy
\eq{\int_{\S^3} \abs{\rd u}^p}
for every $p\in(1,\infty)$ (equivalently, $\pi$ is $p$-harmonic for all $p$).

Rivi\`ere \cite{Riviere98CAG} proved that
$\pi$ enjoys a strong variational rigidity: for $p\ge 4$ it is the unique minimizer of the $p$-energy in its homotopy class, up to orientation-preserving isometries of $\S^3$. Moreover, there exists a $C^1$-neighborhood of $\pi$ in which $\pi$ is the unique minimizer of the $p$-energy (modulo orientation-preserving isometries if $p>3$, and modulo orientation-preserving conformal maps if $p=3$). For $p<3$, $\pi$ is not a local minimizer (e.g. by composing with dilations).

As an application of Theorem~\ref{main_thm}, we are able to prove Theorem~\ref{thm:hopf-3energy-intro}, which was conjectured by Rivi\`ere \cites{Riviere98CAG,Riviere23}. 

\begin{proof}[Proof of Theorem~\ref{thm:hopf-3energy-intro}]
Actually Rivi\`ere in \cite{Riviere98CAG} already proved that Theorem \ref{main_thm} implies Theorem \ref{thm:hopf-3energy-intro}. For convenience of the reader we provide his reduction and prove the rigidity result.

Since $H^2(\S^3)=0$, the closed pull-back $2$-form $u^*\omega_{\S^2}$ is exact, so
\eq{
    u^*\omega_{\S^2} = \rd\alpha
}
for some $1$-form $\alpha$ on $\S^3$. If desired, $\alpha$ can be chosen co-closed by Hodge theory.

The AM--GM inequality implies
\eq{\label{eq:23}
    \abs{\rd u}^2 \geq 2\abs{\rd\alpha},
}
with equality if and only if $u$ is horizontally weakly conformal. More precisely, choose pointwise orthonormal bases $\{e_1,e_2,e_3\}$ of $T_x\S^3$ and $\{f_1,f_2\}$ of $T_{u(x)}\S^2$ such that
\eq{
    \rd u(e_1)=\lambda_1f_1, \quad \rd u(e_2)=\lambda_2f_2, \quad \rd u(e_3)=0.
}
Then
\eq{
    \abs{\rd u}^2 = \lambda_1^2 + \lambda_2^2,
}
and
\eq{
    u^*\omega_{\S^2}(e_1,e_2) = \omega_{\S^2}(\rd u(e_1),\rd u(e_2)) = \omega_{\S^2}(\lambda_1f_1,\lambda_2f_2) = \lambda_1\lambda_2,
}
hence
\eq{
    u^*\omega_{\S^2} = \lambda_1\lambda_2 e^1\w e^2.
}
Therefore
\eq{
    \abs{\rd\alpha} = \abs{u^*\omega_{\S^2}} = \lambda_1\lambda_2 \leq \frac{1}{2}(\lambda_1^2 + \lambda_2^2) = \frac{1}{2}\abs{\rd u}^2,
}
with equality if and only if $\lambda_1=\lambda_2$. The estimate \eqref{eq:23} follows.

Recall that fixing the homotopy class is equivalent to fixing the Hopf invariant
\eq{
    \int_{\S^3} \alpha\w\rd\alpha = \int_{\S^3}\<\curl\alpha,\alpha\>.
}
Theorem~\ref{main_thm} and \eqref{eq:23} imply
\eq{
    \int_{\S^3} \abs{\rd u}^3
    \geq 2\sqrt{2}\int_{\S^3}\abs{\rd\alpha}^{\frac{3}{2}}
    \geq 2\sqrt{2}\cdot(2\omega_3^{\frac{1}{3}})^{\frac{3}{4}}\Big(\int_{\S^3}\< \curl\alpha,\alpha\>\Big)^{\frac{3}{4}}=\int_{\S^ 3} \abs{\rd \pi}^3,
}
because for the Hopf map $\pi$, all inequalities in the proof are equalities.

Moreover, equality holds if and only if $u$ is horizontally weakly conformal and $\alpha$ is a positive Killing $1$-form, modulo orientation-preserving conformal transformations and modulo $\ker(\rd)$; that is,
\eq{
    \alpha = \Phi^*\xi+\gamma,
}
where $\Phi\in{\rm Conf}^+(\S^3)$ is an orientation-preserving conformal transformation of $\S^3$, $\xi$ is a positive Killing $1$-form, and $\gamma\in\ker(\rd)$. The map $\Phi$ must be orientation-preserving since the Hopf invariant is fixed. Furthermore, fixing the Hopf invariant implies  fixing the length of $\xi$, and thus
\eq{
    \xi = R^*\xi_0 \quad\hbox{for some } R\in{\rm SO}(4),
}
where $\xi_0$ is the positive Killing $1$-form corresponding to the Hopf map $\pi$, i.e., $\rd\xi_0=\pi^*\omega_{\S^2}$. Hence
\eq{
    u^*\omega_{\S^2} = \rd\alpha = \Phi^*\rd\xi = \Phi^*R^*\rd\xi_0 = (\pi\circ R\circ \Phi)^*\omega_{\S^2}.
}
Since $\ker(\rd u)=\ker(\rd(\pi\circ R\circ\Phi))$, it follows that
\eq{
    u = h\circ\pi\circ R\circ \Phi,
}
for some $h:\S^2\to\S^2$ that preserves the area measure. Since both $u$ and $\pi\circ R\circ\Phi$ are horizontally weakly conformal, it follows that $h$ is conformal. As $h$ preserves the area measure, it must in fact be an isometry.
Finally, since the Hopf map $\pi$ is $\S^1$-invariant, $h$ admits an isometric lift to $\S^3$, i.e.
\eq{
    h\circ\pi = \pi\circ R', \qquad \hbox{for some } R'\in{\rm SO}(4).
}
Therefore,
\eq{
    u = h\circ\pi\circ R\circ\Phi = \pi\circ (R'\circ R\circ\Phi), \qquad R'\circ R\circ\Phi\in{\rm Conf}^+(\S^3).
}
The converse is immediate, which completes the proof.
\end{proof}

\section{Application 3. The Hopf map minimizes the Faddeev--Skyrme energy}\label{sec:app3}

The Faddeev--Skyrme energy on $\S^3$ is defined by
\eq{
    \mathcal{FS}_\rho(u) &\coloneqq \int_{\S^3} \abs{\rd u}^2\,\rdV + \frac{1}{4\rho^2}\int_{\S^3} \abs{\rd u\w\rd u}^2\,\rdV, \\
    &= \int_{\S^3} \abs{\rd u}^2\,\rdV + \frac{1}{\rho^2}\int_{\S^3} \abs{u^*\omega_{\S^2}}^2\,\rdV, \qquad u:\S^3\to\S^2, \ \rho>0.
}
We normalize the Hopf invariant by
\eq{\label{eq:48}
    \int_{\S^3} \alpha\w\rd\alpha = 8\omega_3,
}
where $\alpha\in\Omega^1(\S^3)$ is chosen co-closed and satisfies $u^*\omega_{\S^2}=\rd\alpha$. 

The goal is to minimize the Faddeev--Skyrme energy within the homotopy class. The existence of a minimizer was proved by Lin--Yang \cite{LinYang04}, see also Esteban \cite{Esteban86}.

Ward \cite{Ward99} showed that the Hopf map $\pi$ is unstable for $\rho>\sqrt{2}$. On the other hand, Speight--Svensson \cite{SpeightSvensson07} proved that $\pi$ is (linearly) stable when $\rho\le\sqrt{2}$, and Isobe \cite{Isobe08} proved that $\pi$ is a local minimizer when $\rho<\sqrt{2}$. It is therefore natural to conjecture that
\[
\textit{the Hopf map $\pi:\S^3\to\S^2$ is a global minimizer when $\rho\le\sqrt{2}$.}
\]
The case $\rho=0$  was proved in \cite{SpeightSvensson11}. Very recently, Guerra--Lamy--Zemas \cite{GLZ26} made a breakthrough by proving global minimality (and uniqueness modulo rigid motions) of the Hopf map for $\rho\le 1$. In fact, they improved it to prove that there exists $\rho_0\in (1, \sqrt 2]$ such that  global minimality holds for any  $\rho \le \rho_0$.

In this section, we prove the conjecture for all $\rho\le\sqrt{2}$, and we show that the only minimizers are of the form $\pi\circ R$ with $R\in{\rm SO}(4)$.

Let $\alpha_0$ be the $1$-form associated with the Hopf map $\pi$, i.e. $\pi^*\omega_{\S^2}=\rd\alpha_0$. Under the normalization $\int_{\S^3}\alpha_0\w\rd\alpha_0=8\omega_3$, we have
\eq{
    \abs{\rd\pi}^2 = 8, \qquad \abs{\pi^*\omega_{\S^2}} = \abs{\rd\alpha_0} = 4.
}
Therefore,
\eq{
    \mathcal{FS}_\rho(\pi) = 8\omega_3 + 16\omega_3\rho^{-2}.
}
Define
\eq{
    E_\rho(\alpha) \coloneqq 2\int_{\S^3}\abs{\rd\alpha}\,\rdV + \rho^{-2}\int_{\S^3}\abs{\rd\alpha}^2\,\rdV.
}
Then
\eq{\label{eq:24}
    E_\rho(\alpha_0) = 8\omega_3 + 16\omega_3\rho^{-2} = \mathcal{FS}_\rho(\pi),
}

\begin{lemma}\label{lem7.1}
Let $u:\S^3\to\S^2$ and let $\alpha\in\Omega^1(\S^3)$ satisfy $u^*\omega_{\S^2}=\rd\alpha$. Then
\eq{
    \mathcal{FS}_\rho(u) \geq E_\rho(\alpha),
}
with equality if and only if $u$ is horizontally weakly conformal. In particular, the Hopf map $\pi$ attains equality.
\end{lemma}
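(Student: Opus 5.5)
The plan is to reduce the statement to a pointwise comparison of the two integrands and then integrate over $\S^3$. Since $u^*\omega_{\S^2}=\rd\alpha$, the second term of $\mathcal{FS}_\rho(u)$ already equals $\rho^{-2}\int_{\S^3}\abs{\rd\alpha}^2$, exactly the second term of $E_\rho(\alpha)$. It therefore suffices to prove the pointwise bound
\[
\abs{\rd u}^2\ge 2\abs{\rd\alpha}
\]
and to determine when it is an equality.

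This bound is \eqref{eq:23}, established in the proof of Theorem~\ref{thm:hopf-3energy-intro}. I would recall its argument briefly. At each point $x$, take a singular value decomposition of $\rd u_x$: choose orthonormal frames $\{e_1,e_2,e_3\}$ of $T_x\S^3$ and $\{f_1,f_2\}$ of $T_{u(x)}\S^2$ with $\rd u(e_1)=\lambda_1 f_1$, $\rd u(e_2)=\lambda_2 f_2$, $\rd u(e_3)=0$, and $\lambda_1,\lambda_2\ge 0$. Then
\[
\abs{\rd u}^2=\lambda_1^2+\lambda_2^2, \qquad \abs{u^*\omega_{\S^2}}=\lambda_1\lambda_2 .
\]
AM--GM gives the bound, with equality if and only if $\lambda_1=\lambda_2$, which is precisely horizontal weak conformality. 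Integrating yields $\mathcal{FS}_\rho(u)\ge E_\rho(\alpha)$.

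For the equality statement, equality of the integrals forces pointwise equality almost everywhere, since the integrand difference is nonnegative. For smooth $u$, or by the a.e. definition in the Sobolev setting, this means $u$ is horizontally weakly conformal. The converse is immediate.

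For the Hopf map, $\rd\pi$ restricted to the horizontal distribution is a homothety, as is visible from the computation $\abs{\rd\pi}^2=8$ and $\abs{\pi^*\omega_{\S^2}}=4$, giving $\lambda_1=\lambda_2=2$. Alternatively, one can cite \eqref{eq:24} directly. No step is a real obstacle; the only care needed is making the equality characterization precise at points where $\rd u$ has rank less than $2$. There both sides vanish or $\lambda_1\lambda_2=0$, and weak conformality is understood to allow $\lambda_1=\lambda_2=0$.
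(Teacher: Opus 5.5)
Your proposal is correct and is essentially the paper's proof: because $u^*\omega_{\S^2}=\rd\alpha$, the quartic terms coincide, integrating the pointwise AM--GM bound $\abs{\rd u}^2\ge 2\abs{\rd\alpha}$ from \eqref{eq:23} gives the inequality, and equality of the integrals forces a.e.\ pointwise equality, i.e.\ horizontal weak conformality. Your check that $\lambda_1=\lambda_2=2$ for the Hopf map, from $\abs{\rd\pi}^2=8$ and $\abs{\pi^*\omega_{\S^2}}=4$, spells out what the paper leaves implicit in \eqref{eq:24}.
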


\begin{proof}
By \eqref{eq:23} we have the pointwise estimate $\abs{\rd u}^2\ge 2\abs{\rd\alpha}$. Integrating yields
\eq{
    \int_{\S^3}\abs{\rd u}^2\,\rdV \ge 2\int_{\S^3}\abs{\rd\alpha}\,\rdV.
}
Since $\mathcal{FS}_\rho(u)=\int_{\S^3}\abs{\rd u}^2\,\rdV + \rho^{-2}\int_{\S^3}\abs{\rd\alpha}^2\,\rdV$, this implies $\mathcal{FS}_\rho(u)\ge E_\rho(\alpha)$.

Moreover, equality holds if and only if equality holds in \eqref{eq:23} almost everywhere, i.e. if and only if $u$ is horizontally weakly conformal.
\end{proof}

We now restate Theorem \ref{thm:fs-intro}.
\begin{theorem}\label{thm:FS-minimizers}
For every $\rho\le\sqrt{2}$, the minimizers of $\mathcal{FS}_{\rho}$ in the homotopy class of the Hopf map $\pi$ are precisely $\pi\circ R$ with $R\in{\rm SO}(4)$.
\end{theorem}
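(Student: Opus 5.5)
The plan is to reduce, via Lemma~\ref{lem7.1}, to a lower bound for $E_\rho(\alpha)$ under the constraint $\int_{\S^3}\alpha\w\rd\alpha=8\omega_3$. I would then combine Theorem~\ref{main_thm} (Corollary~\ref{thm_form_3D}) with a pointwise one-variable inequality that linearizes the integrand of $E_\rho$ at the Hopf value $\abs{\rd\alpha}=4$.

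\emph{Step 1: the curl--Sobolev bound.} Since $\int\<\curl\alpha,\alpha\>=8\omega_3>0$, Theorem~\ref{main_thm} with $n=3$ gives
\[
\Big(\int_{\S^3}\abs{\rd\alpha}^{3/2}\Big)^{4/3}\ge 2\omega_3^{1/3}\cdot 8\omega_3 ,
\qquad\text{that is,}\qquad
\int_{\S^3}\abs{\rd\alpha}^{3/2}\ge 8\omega_3 .
\]
For the Hopf form, $\abs{\rd\alpha_0}\equiv 4$, so $\int_{\S^3}\abs{\rd\alpha_0}^{3/2}=4^{3/2}\omega_3=8\omega_3$ and this bound is sharp.

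\emph{Step 2: a pointwise tangent inequality.} Put $x=\rho^{-2}\ge \tfrac12$ and, for $t\ge0$, define
\[
h(t)=2t+xt^2-\lambda t^{3/2},\qquad \lambda=\frac{2+8x}{3}.
\]
The value of $\lambda$ is chosen so that $h'(4)=0$. Writing $s=\sqrt t$, one has $h'=2xs^2-\tfrac32\lambda s+2$, a quadratic in $s$ with roots $s=2$ and $s=1/(2x)\le 1$. Hence $h$ increases on $[0,1/(4x^2)]$, decreases up to $t=4$, and increases afterwards.

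Its global minimum on $[0,\infty)$ is therefore $\min\{h(0),h(4)\}$. Since $h(4)=8+16x-8\lambda=\tfrac{8-16x}{3}\le 0=h(0)$ exactly when $x\ge\tfrac12$, we get $h(t)\ge h(4)$ for all $t\ge0$. This is precisely where the hypothesis $\rho\le\sqrt2$ enters.

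Integrating with $t=\abs{\rd\alpha}$ and using Step~1 gives
\[
E_\rho(\alpha)\ge \lambda\int_{\S^3}\abs{\rd\alpha}^{3/2}+h(4)\omega_3\ge \big(8\lambda+h(4)\big)\omega_3=E_\rho(\alpha_0)=\mathcal{FS}_\rho(\pi)
\]
by \eqref{eq:24}. Together with Lemma~\ref{lem7.1} this yields $\mathcal{FS}_\rho(u)\ge\mathcal{FS}_\rho(\pi)$.

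\emph{Step 3: the equality case (main obstacle).} If $u$ is a minimizer, all inequalities above are equalities. First, $u$ is horizontally weakly conformal by Lemma~\ref{lem7.1}. Second, equality holds in Theorem~\ref{main_thm}, so $\alpha=\Phi^*\xi+\gamma$ with $\Phi\in{\rm Conf}^+(\S^3)$, $\xi$ a positive Killing form and $\gamma\in\ker\rd$. Third, $h(\abs{\rd\alpha})=h(4)$ almost everywhere.

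The delicate point is the borderline $\rho=\sqrt2$, where $h(0)=h(4)$, so the third condition only gives $\abs{\rd\alpha}\in\{0,4\}$ a.e. To get around this, I use the second condition. The Killing form $\xi$ has $\abs{\rd\xi}$ equal to a positive constant, so $\abs{\rd\alpha}=c\,\rho_\Phi^2$, where $\Phi^*g=\rho_\Phi^2 g$ and $\rho_\Phi$ is continuous and positive. Hence $\abs{\rd\alpha}$ cannot take the value $0$, so $\abs{\rd\alpha}\equiv4$ and $\rho_\Phi$ is constant. Since $\rho_\Phi$ is constant, $\Phi$ is an isometry, i.e.\ $\Phi\in{\rm SO}(4)$.

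From here I would rerun the rigidity argument in the proof of Theorem~\ref{thm:hopf-3energy-intro}:
\begin{itemize}
\item $u^*\omega_{\S^2}=(\pi\circ R)^*\omega_{\S^2}$ for some $R\in{\rm SO}(4)$, which absorbs $\Phi$;
\item consequently $u=h\circ\pi\circ R$, with $h$ an area-preserving map of $\S^2$;
\item $h$ is conformal because both $u$ and $\pi\circ R$ are horizontally weakly conformal, so $h$ is an isometry;
\item $h$ lifts to $\S^3$, so $u=\pi\circ R''$ with $R''\in{\rm SO}(4)$.
\end{itemize}
Conversely, every map $\pi\circ R$ with $R\in{\rm SO}(4)$ clearly attains the minimum.
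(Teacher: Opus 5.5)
Your argument is correct. It differs from the paper's only in how the elementary one-variable inequality is organized.

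\textbf{The paper's route.} It first settles $\rho=\sqrt2$, where $2t+\tfrac12t^2-2t^{3/2}=2(\sqrt t-\tfrac t2)^2\ge0$. It then reaches $\rho<\sqrt2$ by writing $E_\rho=E_{\sqrt2}+(\rho^{-2}-\tfrac12)\int\abs{\rd\alpha}^2$. The last integral is bounded below by $16\omega_3$ via H\"older and a second application of Theorem~\ref{main_thm}.

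\textbf{How your $h$ relates.} Your function decomposes as $h(t)=2(\sqrt t-\tfrac t2)^2+(x-\tfrac12)\bigl(t^2-\tfrac83t^{3/2}\bigr)$. At $x=\tfrac12$ it is exactly the paper's square. For $x>\tfrac12$ you have replaced H\"older by the pointwise Young-type bound $t^2\ge\tfrac83t^{3/2}-\tfrac{16}3$. Integrated, this gives the same $\int\abs{\rd\alpha}^2\ge16\omega_3$.

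\textbf{What each buys.} The paper's version is modular. It singles out $\rho=\sqrt2$ as the critical case, where $E_\rho$ dominates the conformally invariant quantity $2\int\abs{\rd\alpha}^{3/2}$ pointwise. Yours invokes the curl--Sobolev inequality only once. It also makes the threshold visible as the condition $h(4)\le h(0)$. For $\rho<\sqrt2$, it yields $\abs{\rd\alpha}\equiv4$ directly from the strictness of the pointwise minimum.

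\textbf{Equality at the borderline.} Your analysis is more explicit here than the paper's. For $\rho=\sqrt2$ the pointwise equality only gives $\abs{\rd\alpha}\in\{0,4\}$ a.e. The paper passes over this when it combines the two equality cases into ``$\abs{\rd\alpha}\equiv4$''. You exclude the value $0$ through $\abs{\rd\alpha}=c\,\rho_\Phi^2>0$. This also shows $\Phi\in{\rm SO}(4)$ before the rigidity argument the two proofs share.
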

\begin{proof}
By Lemma~\ref{lem7.1} and \eqref{eq:24}, it suffices to prove minimality and uniqueness for $E_\rho$ when $\rho\le\sqrt{2}$. Moreover,
it suffices to prove that $\alpha_0$ minimizes $E_{\sqrt{2}}$ and to establish uniqueness. Indeed, H\"older's inequality, the normalization \eqref{eq:48}, and Theorem~\ref{main_thm} imply
\eq{
    \int_{\S^3} \abs{\rd\alpha}^2 \ge \omega_3^{-\frac{1}{3}}\Big( \int_{\S^3}\abs{\rd\alpha}^{\frac{3}{2}} \Big)^{\frac{4}{3}} \ge 16\omega_3.
}
Assuming the case $\rho=\sqrt{2}$, we then obtain for $\rho\le\sqrt{2}$,
\eq{\label{eq:25}
    E_\rho(\alpha) &= E_{\sqrt{2}}(\alpha) + \Big(\rho^{-2}-\frac{1}{2}\Big)\int_{\S^3} \abs{\rd\alpha}^2 \\
    &\ge 16\omega_3 + 16\omega_3\Big(\rho^{-2}-\frac{1}{2}\Big) \\
    &= 8\omega_3 + 16\omega_3\rho^{-2} = E_\rho(\alpha_0).
}
Moreover, equality for some $\rho\le\sqrt{2}$ implies equality for $\rho=\sqrt{2}$.

We now consider the case $\rho=\sqrt{2}$.
Using Theorem~\ref{main_thm}, we have
\eq{
    \Big(\int_{\S^3} \abs{\rd\alpha}^{\frac{3}{2}}\Big)^{\frac{4}{3}} \ge 2\omega_3^{\frac{1}{3}}\int_{\S^3}\alpha\w\rd\alpha = 16\omega_3^{\frac{4}{3}}.
}
Therefore,
\eq{\label{eq:eq1}
    E_{\sqrt{2}}(\alpha) = 2\int_{\S^3}\abs{\rd\alpha} + \frac{1}{2}\int_{\S^3}\abs{\rd\alpha}^2 \ge 2\int_{\S^3} \abs{\rd\alpha}^{\frac{3}{2}} \ge 16\omega_3 = E_{\sqrt{2}}(\alpha_0).
}

We next characterize minimizers of $\mathcal{FS}_{\sqrt{2}}$. Each minimizer $u$ is also a minimizer of $E_{\sqrt{2}}$, and satisfies
\eq{
    \mathcal{FS}_{\sqrt{2}}(u)=E_{\sqrt{2}}(\alpha)=16\omega_3.
}
By Lemma~\ref{lem7.1}, $u$ is horizontally weakly conformal. Moreover, by the equality case of Theorem~\ref{main_thm} and AM--GM, equality in \eqref{eq:eq1} holds if and only if
\eq{
    \abs{\rd\alpha}\equiv4, \quad \alpha = \Phi^*\xi+\gamma,
}
where $\Phi$ is an orientation-preserving conformal transformation of $\S^3$, $\xi$ is a positive Killing $1$-form, and $\gamma\in\ker(\rd)$. Using the same argument as in the proof of equality case of Theorem~\ref{thm:hopf-3energy-intro}, we see that $u=\pi\circ R$ for some $R\in{\rm SO}(4)$. See also \cite[Proposition 5.1]{GLZ26}. The converse is immediate, which completes the proof.
\end{proof}

\begin{remark}As mentioned above, that Theorem\ref{main_thm} implies Theorem \ref{thm:fs-intro} is known to experts. See for example \cite{Harland13MassivePions}, where the author also conjectured the best constant in Theorem \ref{main_thm} when $n=3$.
\end{remark}

\section{Application 4. A sharp lower bound for the magnetic field of zero modes}\label{sec:app4}

For convenience, we restate the zero mode equation \eqref{eq1.0}. In this section, we write
\eq{\label{zero-mode}
    \D\varphi = i\,A\cdot\varphi.
}
A solution $\varphi\not\equiv 0$ of \eqref{zero-mode} is called a \emph{zero mode}. For background and physical motivation, we refer to \cite{Loss_Yau_86,FL1,FL22,Frank_Loss_2024} and the references therein.

Equation \eqref{zero-mode} enjoys two fundamental invariances.

\smallskip
\noindent\emph{Gauge invariance.} For any real-valued function $f$, the transformation
\eq{
    (\varphi,A)\longmapsto (e^{if}\varphi,\,A+\nabla f)
}
sends solutions of \eqref{zero-mode} to solutions.

\smallskip
\noindent\emph{Conformal invariance.} If $\tilde g=\rho^2g$, then
\eq{
    (\varphi,A,g)\longmapsto (\rho^{-\frac{n-1}{2}}\tilde\varphi,\,\rho^{-2}A,\,\tilde g)
}
also preserves the solution set, where $\varphi\mapsto\tilde\varphi$ denotes the natural isometry between the spinor bundles associated with $g$ and $\tilde g$ (see, e.g., \cite{WZ25b}).

In \cite{Frank_Loss_2024}, Frank--Loss proved a sharp necessary condition for the existence of zero modes in $\R^3$ (and, by conformal equivalence, on $\S^3$), namely
\eq{\label{A}
    \norm{A}_3 \ge \frac{3}{2}\,\omega_3^{\frac{1}{3}}.
}
More generally, analogous criteria are available in higher dimensions and on manifolds; see \cite{Frank_Loss_2024,Reuss25,WZ25b}.
In this section, we restrict ourselves to the  $3$-dimensional case.

In addition, Frank--Loss \cites{FL1,Frank_Loss_2024} established the universal lower bound
\eq{\label{eq:54}
    \norm{\curl A}_{\frac{3}{2}} \geq \frac{3}{2}\,\omega_3^{\frac{2}{3}}.
}
They noted in \cite{FL22} that the constant in \eqref{eq:54} is not optimal, and this was subsequently confirmed by Julio--Batalla \cite{Julio26}.
Here $\curl A$ is the \emph{magnetic field}; geometrically, it is the curvature of the connection $\rd+iA$ on the trivial complex line bundle over $\S^3$. Hence it is gauge invariant, so it is more interesting to seek a best lower bound for $\curl A$. A distinguished family of solutions of \eqref{zero-mode} is provided by Killing spinors together with their associated Reeb fields; for these examples one has
\[
    \norm{\curl A}_{\frac{3}{2}} = 3\,\omega_3^{\frac{2}{3}}
\]
(see Subsection~\ref{sec8.1}). It is therefore natural to conjecture that $3\,\omega_3^{\frac{2}{3}}$ is the minimal possible value.

Theorem~\ref{thm:magnetic} confirms this conjecture.

\subsection{Preliminaries on zero modes} \label{sec8.1}

We briefly review the notation for the Dirac operator and the zero mode equation. For details, see \cite{WZ25b}.
Let $\Sigma$ be the spinor bundle over $\S^3$.
We write $\<\cdot,\cdot\>$ for the Hermitian inner product on the spinor bundle, and $\rRe\<\cdot,\cdot\>$ for its real part. The Clifford action of a vector field $X$ on a spinor field $\varphi$ is denoted by $X\cdot\varphi$. The dual $1$-form gives the same action, i.e. $X^\flat\cdot\varphi=X\cdot\varphi$.

Fix the orientation so that, for a local oriented orthonormal frame $\{e_1,e_2,e_3\}$,
\eq{\label{eq:59}
    e_1\cdot e_2\cdot = e_3\cdot, \quad e_2\cdot e_3\cdot = e_1\cdot, \quad e_3\cdot e_1\cdot = e_2\cdot.
}
In particular, for any $1$-form $\gamma$ we have
\eq{
    \gamma\cdot = (*\gamma)\cdot.
}
Let $\nabla$ be the induced spin connection and $\D$ the Dirac operator, i.e.,
\[
\D= e_i\cdot \nabla_{e_i}.
\]
We define the modified spin connection by
\eq{
    \nabla^A_X\varphi \coloneqq \nabla_X\varphi - i\<A,X\>\varphi.
}
It is a metric Clifford connection, i.e. compatible with the metric and the Clifford multiplication. The corresponding Dirac operator is
\eq{
    \D^A\varphi \coloneqq e_j\cdot\nabla^A_{e_j}\varphi = \D\varphi - iA\cdot\varphi.
}
Thus the zero mode equation becomes
\eq{\label{eq:67}
    \D^A\varphi = 0.
}
The Schr\"odinger--Lichnerowicz formula for $\D^A$  states
\eq{\label{eq:84a}
    (\D^A)^2\varphi = (\nabla^A)^*\nabla^A\varphi + \frac{3}{2}\varphi - i\,\rd A^\flat\cdot\varphi,
}
see for instance \cite{Lawson}.

Now we give the important example of the zero mode and compute it carefully.
Let $\varphi_0$ be a unit $-\frac{1}{2}$-Killing spinor on $(\S^3,g_{\rm st})$, i.e.
\eq{
    \nabla_X\varphi_0 = -\frac{1}{2}X\cdot\varphi_0,\qquad \forall X\in\Gamma(T\S^3),\qquad \abs{\varphi_0}=1.
}
Define the associated $1$-form (identified with a vector field)
\eq{
    \xi_0 \coloneqq -\rRe\<ie_j\cdot\varphi_0,\varphi_0\>e_j.
}
Then $\xi_0$ is a unit Killing vector field (the Reeb field associated to $\varphi_0$) and satisfies
\eq{\label{eq:56}
    \curl \xi_0 = 2\xi_0, \qquad \xi_0\cdot\varphi_0 = i\varphi_0, \qquad \abs{\xi_0}=1.
}
Consequently,
\eq{
    \D\varphi_0 = \frac{3}{2}\varphi_0 = -\frac{3}{2}i\,\xi_0\cdot\varphi_0.
}
Therefore, set $A_0=-\frac{3}{2}\xi_0$, then $(\varphi_0,A_0)$ solves the zero mode equation. Using \eqref{eq:56} we obtain
\eq{
    \norm{\curl A_0}_{\frac{3}{2}} = 3\,\omega_3^{\frac{2}{3}}.
}
\begin{remark}\label{rmk:verify-84a}
For completeness, we verify \eqref{eq:84a} for $(\varphi_0,A_0)$. Since $\D^{A_0}\varphi_0=\D\varphi_0-iA_0\cdot\varphi_0=0$, the left-hand side of \eqref{eq:84a} vanishes. Moreover, $\curl\xi_0=2\xi_0$ implies $\rd\xi_0^\flat=2\,*\xi_0^\flat$, hence $\rd A_0^\flat=-\frac{3}{2}\rd\xi_0^\flat=-3\,*\xi_0^\flat$ and, by $\gamma\cdot=(*\gamma)\cdot$, we have
\eq{
    -i\,\rd A_0^\flat\cdot\varphi_0 = 3i\,\xi_0\cdot\varphi_0 = -3\varphi_0.
}
Thus \eqref{eq:84a} reduces to $(\nabla^{A_0})^*\nabla^{A_0}\varphi_0=\frac{3}{2}\varphi_0$, which is clearly true.
\end{remark}

\subsection{Proof of Theorem~\ref{thm:magnetic}}

Set
\eq{
    f\coloneqq\abs{\varphi}, \qquad \varphi = f\phi, \qquad \hbox{ with   }\abs{\phi}=1.
}
To illustrate the approach, we first assume that $f>0$ on $\S^3$.
Certainly, in general, $f$ may have zeros. Unlike in Section~\ref{sec:main}, we must treat the zero set with some care; see Subsection~\ref{sec:regularization}.

We start with a key $3$-dimensional linear-algebra fact. Since $\phi\neq 0$, define
\eq{
    \phi^\perp \coloneqq \{\psi\in\Gamma(\Sigma) \mid \rRe\<\psi,\phi\>=0\}.
}

\begin{lemma}\label{lem8.2}
The map $T\S^3\to\phi^\perp$, $X\mapsto X\cdot\phi$, is an isometry with respect to the Riemannian metric on $T\S^3$ and the real inner product $\rRe\<\cdot,\cdot\>$ on spinors. In particular,
$\{\phi,e_1\cdot\phi,e_2\cdot\phi,e_3\cdot\phi\}$ is an orthonormal basis of the spinor bundle over $\S^3$, with respect to the real inner product $\rRe\<\cdot,\cdot\>$.

\end{lemma}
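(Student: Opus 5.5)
The plan is to verify two pointwise facts about Clifford multiplication on the rank-two complex (real rank-four) spinor bundle of $\S^3$. First, the map $X\mapsto X\cdot\phi$ is isometric. Second, its image lies in $\phi^\perp$. Dimension counting then finishes the argument: $T_x\S^3$ has real dimension $3$, and $\phi^\perp$ at a point has real dimension $4-1=3$.

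For the isometry, I would use that Clifford multiplication by a vector is skew-Hermitian, $\<X\cdot\psi,\chi\>=-\<\psi,X\cdot\chi\>$, together with the Clifford relation $X\cdot Y+Y\cdot X=-2\<X,Y\>$. These give
\[
\rRe\<X\cdot\phi,Y\cdot\phi\> = -\rRe\<Y\cdot X\cdot\phi,\phi\> = -\tfrac12\rRe\<(X\cdot Y+Y\cdot X)\cdot\phi,\phi\> + \tfrac12\rRe\<(X\cdot Y-Y\cdot X)\cdot\phi,\phi\>.
\]
The symmetric part contributes $\<X,Y\>\abs{\phi}^2=\<X,Y\>$. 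For the antisymmetric part, note that $X\cdot Y-Y\cdot X$ is skew-Hermitian, since $(X\cdot Y)^* = Y\cdot X$. Hence $\<(X\cdot Y-Y\cdot X)\phi,\phi\>$ is purely imaginary and its real part vanishes. Taking $Y=\rid$-free special cases gives the remaining facts. With $X=Y$ we get $\abs{X\cdot\phi}^2=\abs{X}^2$. Also $\rRe\<X\cdot\phi,\phi\>=0$, because $X\cdot$ is skew-Hermitian and so $\<X\cdot\phi,\phi\>$ is imaginary. Therefore the image of the map lies in $\phi^\perp$.

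The map is linear and isometric, hence injective. It goes from a $3$-dimensional space into the $3$-dimensional space $\phi^\perp$, so it is onto and therefore an isometry. It then follows that $\{e_1\cdot\phi,e_2\cdot\phi,e_3\cdot\phi\}$ is a real-orthonormal basis of $\phi^\perp$. Adding $\phi$, which has unit length and is real-orthogonal to $\phi^\perp$ by definition, gives a real-orthonormal basis of the fibre.

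The only point needing care is the real dimension of the spinor fibre in dimension $3$: it is $\mathbb{C}^2$, that is, real dimension $4$. This is standard, so I expect no real obstacle. The identities \eqref{eq:59} are not even needed, although they confirm the structure.
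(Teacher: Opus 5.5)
Your proof is correct and follows essentially the same route as the paper: norm preservation $|X\cdot\phi|=|X|$ gives injectivity, and the real dimension count $4-1=3$ gives bijectivity onto $\phi^\perp$. Your version is slightly more complete than the paper's, because you explicitly check that $\mathrm{Re}\langle X\cdot\phi,\phi\rangle=0$, so the image really lies in $\phi^\perp$, and you derive the polarized identity $\mathrm{Re}\langle X\cdot\phi,Y\cdot\phi\rangle=\langle X,Y\rangle$ from the Clifford relation; the paper leaves both of these implicit.
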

\begin{proof}
If $X\cdot\phi=0$ then taking length yields $X=0$, so the map is injective. Since the spinor bundle over $\S^3$ has complex rank $2$, $\phi^\perp$ has real rank $4-1=3$, which equals ${\rm rank}_\R(T\S^3)$; therefore, the map is bijective. Finally, $\abs{X\cdot\phi}=\abs{X}\,\abs{\phi}=\abs{X}$ since $\abs{\phi}=1$, so it is an isometry.
\end{proof}

\begin{remark}\label{rmk8.3} The previous lemma indicates that when $n=3$, spinor fields can be identified with a vector field plus a scalar function. This correspondence becomes more complicated in higher dimensions. 
\end{remark}

Since $\abs{\phi}=1$, we have $\nabla^A_X\phi\perp\phi$ for all $X$. As a consequence of Lemma~\ref{lem8.2}, there exists an endomorphism $K\in\mathrm{End}(T\S^3)$ such that
\eq{\label{eq:64}
    \nabla^A_X\phi = K(X)\cdot\phi, \quad X\in\Gamma(T\S^3).
}
With respect to a local oriented orthonormal frame $\{e_1,e_2,e_3\}$, we write
\eq{\label{eq:def-Kij}
    K_{ij}\coloneqq \<K(e_i),e_j\>,
}
so that $K(e_i)=K_{ij}e_j$.
Hence
\eq{\label{eq:60}
    \nabla^A_X\varphi = \nabla^A_X(f\phi) = X(f)\phi + fK(X)\cdot\phi.
}

\begin{lemma}\label{lem8.3}
We have
\eq{
    e_j\cdot K(e_j)\cdot\phi = -\nabla(\log f)\cdot\phi,
}
moreover ${\rm tr}(K)=0$, and
\eq{\label{eq:65}
    -\nabla(\log f) = (K_{23}-K_{32},K_{31}-K_{13},K_{12}-K_{21}) = 2(H_{23},H_{31},H_{12}),
}
where $H=\frac{1}{2}(K-K^T)$ is the anti-symmetric part of $K$.
\end{lemma}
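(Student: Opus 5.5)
We substitute the decomposition \eqref{eq:60} into the zero mode equation \eqref{eq:67}, and then read off components in the real orthonormal frame $\{\phi,e_1\cdot\phi,e_2\cdot\phi,e_3\cdot\phi\}$ of Lemma~\ref{lem8.2}.

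\emph{The identity $e_j\cdot K(e_j)\cdot\phi=-\nabla(\log f)\cdot\phi$.} Applying $e_j\cdot$ to \eqref{eq:60} with $X=e_j$ and summing gives
\[
0=\D^A\varphi=\nabla f\cdot\phi+f\,e_j\cdot K(e_j)\cdot\phi .
\]
Dividing by $f>0$ yields the first identity immediately.

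\emph{Expanding the Clifford product.} Next we write $K(e_j)=K_{jk}e_k$, so that $e_j\cdot K(e_j)=K_{jk}\,e_j\cdot e_k$. We then use the Clifford relations $e_j\cdot e_j=-1$ and $e_j\cdot e_k=-e_k\cdot e_j$ for $j\ne k$, together with the orientation convention \eqref{eq:59}. This gives
\[
e_j\cdot K(e_j)\cdot=-{\rm tr}(K)+(K_{12}-K_{21})\,e_3\cdot+(K_{23}-K_{32})\,e_1\cdot+(K_{31}-K_{13})\,e_2\cdot .
\]
Hence
\[
e_j\cdot K(e_j)\cdot\phi=-{\rm tr}(K)\,\phi+\bigl((K_{23}-K_{32})e_1+(K_{31}-K_{13})e_2+(K_{12}-K_{21})e_3\bigr)\cdot\phi .
\]

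\emph{Comparing components.} By Lemma~\ref{lem8.2}, $\phi$ is $\rRe$-orthogonal to $X\cdot\phi$ for every vector $X$, and $X\mapsto X\cdot\phi$ is injective. Comparing with $-\nabla(\log f)\cdot\phi$, the $\phi$-component gives ${\rm tr}(K)=0$. Injectivity then gives
\[
-\nabla(\log f)=(K_{23}-K_{32},\,K_{31}-K_{13},\,K_{12}-K_{21}).
\]
Since $H=\tfrac12(K-K^T)$ satisfies $H_{23}=\tfrac12(K_{23}-K_{32})$ and similarly for the other entries, this is exactly $2(H_{23},H_{31},H_{12})$, which is \eqref{eq:65}.

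The only delicate point is the sign bookkeeping in the Clifford expansion. In particular, $e_2\cdot e_1\cdot=-e_3\cdot$ and its cyclic analogues must match \eqref{eq:59}, and the convention $e_j\cdot e_j=-1$ must be used consistently; the rest is direct.
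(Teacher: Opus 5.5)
Your proposal is correct and follows essentially the same route as the paper: substitute \eqref{eq:60} into $\D^A\varphi=0$, expand $e_j\cdot K(e_j)$ using \eqref{eq:59}, and compare coefficients in the real orthonormal frame $\{\phi,e_1\cdot\phi,e_2\cdot\phi,e_3\cdot\phi\}$ of Lemma~\ref{lem8.2}. The only cosmetic difference is how ${\rm tr}(K)=0$ is obtained. You read it off as the $\phi$-component of the full expansion, using $e_j\cdot e_j=-1$. The paper instead first pairs with $\phi$, using that Clifford multiplication is skew-Hermitian, and then expands only the off-diagonal terms.
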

\begin{proof}
The zero mode equation implies
\eq{
    0 = \D^A\varphi = e_j\cdot\nabla^A_{e_j}\varphi = e_j\cdot(e_j(f)\phi+fK(e_j)\cdot\phi) = \nabla f\cdot\phi + fe_j\cdot K(e_j)\cdot\phi,
}
hence
\eq{\label{eq:57}
    e_j\cdot K(e_j)\cdot\phi = -\nabla(\log f)\cdot\phi.
}
Taking the real inner product with $\phi$ and using that Clifford multiplication by a vector is skew-Hermitian, we obtain
\eq{
    0 = \rRe\<e_j\cdot K(e_j)\cdot\phi,\phi\>.
}
On the other hand,
\eq{
    \rRe\<e_j\cdot K(e_j)\cdot\phi,\phi\>
    = \sum_{j} \rRe\<K(e_j)\cdot\phi, e_j\cdot\phi\>
    = -\sum_j K_{jj} = -{\rm tr}(K),
}
and thus ${\rm tr}(K)=0$. Finally, by \eqref{eq:59} we have
\eq{
    e_j\cdot K(e_j)\cdot\phi = \sum_{j,k}K_{jk}e_j\cdot e_k\cdot\phi = \sum_{j\not=k}K_{jk}e_j\cdot e_k\cdot\phi \\
    = (K_{12}-K_{21})e_3\cdot\phi + (K_{23}-K_{32})e_1\cdot\phi + (K_{31}-K_{13})e_2\cdot\phi.
}
The claim follows. 
\end{proof}

Decompose $K$ by
\eq{
    K = \frac{1}{3}{\rm tr}(K)\rid + H + S,
}
the trace part, anti-symmetric part, and symmetric trace-free part, with
\eq{
    H^T = -H, \qquad S^T=S, \qquad {\rm tr}(S)=0.
}
In view of the previous Lemma,
\eqref{eq:60} becomes
\eq{\label{eq:61}
    \nabla^A_X\varphi = X(f)\phi + fH(X)\cdot\phi + fS(X)\cdot\phi \eqcolon P_X+Q_X+R_X.
}
For any $P^1,P^2\in\Gamma(T^*\S^3\otimes\Sigma)$, define the real inner product by
\[
\rRe\langle P^1,P^2\rangle \coloneqq \rRe\langle P^1_{e_i}, P^2_{e_i}\rangle=\sum_{j=1}^3 \rRe\langle P^1_{e_j},P^2_{e_j}\rangle.
\]
\begin{lemma}\label{lem8.4}
 $P,Q,R$ are pairwise orthogonal with respect to the real inner product, i.e.
\eq{
    \rRe\<P,Q\>=\rRe\<P,R\>=\rRe\<Q,R\>=0.
}
\end{lemma}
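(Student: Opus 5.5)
The plan is to expand each pairing frame-wise, $\rRe\<P,Q\>=\sum_j\rRe\<P_{e_j},Q_{e_j}\>$ and similarly for the others, and reduce each term to inner products of the orthonormal basis $\{\phi,e_1\cdot\phi,e_2\cdot\phi,e_3\cdot\phi\}$ from Lemma~\ref{lem8.2}. We have $P_{e_j}=e_j(f)\phi$, $Q_{e_j}=fH(e_j)\cdot\phi$ and $R_{e_j}=fS(e_j)\cdot\phi$. By Lemma~\ref{lem8.2}, the map $X\mapsto X\cdot\phi$ is an isometry of $T\S^3$ onto $\phi^\perp$. So all cross terms become Euclidean inner products of $3\times 3$ matrices, and the orthogonality follows from linear algebra on $\mathrm{End}(T\S^3)$.

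\textbf{Pairings with $P$.} For $\rRe\<P,Q\>$ and $\rRe\<P,R\>$, each summand has the form $e_j(f)f\,\rRe\<\phi,Y\cdot\phi\>$ with $Y=H(e_j)$ or $Y=S(e_j)$. Since $Y\cdot\phi\in\phi^\perp$, this real part vanishes. Equivalently, Clifford multiplication by a real vector is skew-Hermitian, so $\<Y\cdot\phi,\phi\>$ is purely imaginary. Hence both sums are zero term by term.

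\textbf{The pairing $\rRe\<Q,R\>$.} By the isometry of Lemma~\ref{lem8.2},
\[
\rRe\<H(e_j)\cdot\phi,S(e_j)\cdot\phi\>=\<H(e_j),S(e_j)\>.
\]
Therefore
\[
\rRe\<Q,R\>=f^2\sum_j\<H(e_j),S(e_j)\>=f^2\,{\rm tr}(H^TS)=f^2\sum_{j,k}H_{jk}S_{jk}.
\]
This vanishes because $H$ is antisymmetric and $S$ is symmetric: swapping the indices $j$ and $k$ changes the sign of the sum.

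\textbf{Where care is needed.} The only point to check is that the isometry statement in Lemma~\ref{lem8.2} concerns the real inner product, which gives $\rRe\<X\cdot\phi,Y\cdot\phi\>=\<X,Y\>$ by polarization. Beyond that, the proof is a direct computation.

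As a further remark, the trace part of $K$ contributes nothing here, because ${\rm tr}(K)=0$ by Lemma~\ref{lem8.3}. This is why the decomposition in \eqref{eq:61} has only the three pieces $P$, $Q$, $R$.
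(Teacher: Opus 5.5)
Your proof is correct and is essentially the paper's own argument. The $P$-pairings vanish termwise because $\rRe\<\phi,Y\cdot\phi\>=0$ for any real vector $Y$. Since Lemma~\ref{lem8.2} gives $\rRe\<X\cdot\phi,Y\cdot\phi\>=\<X,Y\>$, we get $\rRe\<Q,R\>=f^2\sum_{j,k}H_{jk}S_{jk}$, which is zero because $H$ is antisymmetric and $S$ is symmetric.
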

\begin{proof}
First,
\eq{
    \rRe\<P,Q\> = e_j(f)f\rRe\<\phi,H(e_j)\cdot\phi\> = 0.
}
Second,
\eq{
    \rRe\<P,R\> = e_j(f)f\rRe\<\phi,S(e_j)\cdot\phi\> = 0.
}
Third, since $H$ is anti-symmetric and $S$ is symmetric,
\eq{
    \rRe\<Q,R\> = f^2\rRe\<H(e_j)\cdot\phi,S(e_j)\cdot\phi\> = f^2\<H,S\> = 0.
}
Hence the claim follows.
\end{proof}

\begin{lemma}\label{lem8.5}
We have
\eq{
    \abs{P}^2 = \abs{\nabla f}^2, \qquad \abs{Q}^2 = \frac{1}{2}\abs{\nabla f}^2, \qquad \abs{R}^2 = f^2\abs{S}^2.
}
\end{lemma}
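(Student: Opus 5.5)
Each of the three identities follows from Lemma~\ref{lem8.2} by a direct computation: Clifford multiplication $X\mapsto X\cdot\phi$ is an isometry onto $\phi^\perp$, and $\abs{\phi}=1$.

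For $P$, we have $P_{e_j}=e_j(f)\phi$. Hence
\[
\abs{P}^2=\sum_j e_j(f)^2\abs{\phi}^2=\abs{\nabla f}^2 .
\]

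For $R$, we have $R_{e_j}=fS(e_j)\cdot\phi$. Lemma~\ref{lem8.2} gives $\abs{S(e_j)\cdot\phi}=\abs{S(e_j)}$, so
\[
\abs{R}^2=f^2\sum_j\abs{S(e_j)}^2=f^2\sum_{j,k}S_{jk}^2=f^2\abs{S}^2 ,
\]
where $\abs{S}$ is the Hilbert--Schmidt norm.

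For $Q$, the same argument gives
\[
\abs{Q}^2=f^2\sum_j\abs{H(e_j)}^2=f^2\sum_{j,k}H_{jk}^2=2f^2\bigl(H_{12}^2+H_{23}^2+H_{31}^2\bigr),
\]
since $H$ is antisymmetric and each off-diagonal entry appears twice. Next I would invoke \eqref{eq:65} from Lemma~\ref{lem8.3}:
\[
-\nabla(\log f)=2(H_{23},H_{31},H_{12}),
\qquad\text{so}\qquad
H_{12}^2+H_{23}^2+H_{31}^2=\tfrac14\abs{\nabla\log f}^2 .
\]
Substituting, $\abs{Q}^2=\tfrac12 f^2\abs{\nabla\log f}^2=\tfrac12\abs{\nabla f}^2$.

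No step is a real obstacle. The one point needing care is the factor of $2$ from the antisymmetric entries combined with the factor of $2$ in \eqref{eq:65}. Both rest on the sign and orientation conventions \eqref{eq:59} already fixed in Lemma~\ref{lem8.3}, and since only squares enter, signs do not affect the result.
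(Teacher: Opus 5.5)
Your proposal is correct and follows the same route as the paper's proof. The identities for $P$ and $R$ come straight from $\abs{\phi}=1$ (via $\abs{X\cdot\phi}=\abs{X}$). For $Q$, the paper likewise uses \eqref{eq:65} to get $\abs{\nabla(\log f)}^2=4(H_{12}^2+H_{23}^2+H_{31}^2)=2\abs{H}^2$, and then $\abs{Q}^2=f^2\abs{H}^2=\frac12\abs{\nabla f}^2$.
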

\begin{proof}
Since $\abs{\phi}=1$, the first and last identities are trivial. Using \eqref{eq:65} we have
\eq{
    \abs{\nabla(\log f)}^2 = 4(H_{12}^2+H_{23}^2+H_{31}^2) = 2\abs{H}^2,
}
hence
\eq{
    \abs{Q}^2 = f^2\abs{H}^2 = \frac{1}{2}f^2\abs{\nabla(\log f)}^2 = \frac{1}{2}\abs{\nabla f}^2,
}
the claim.
\end{proof}

The following identity follows from \eqref{eq:61}, Lemma~\ref{lem8.4}, and Lemma~\ref{lem8.5}.
\begin{corollary}
We have
\eq{\label{eq:62}
\abs{\nabla^A\varphi}^2 = \frac{3}{2}\abs{\nabla f}^2 + f^2\abs{S}^2.
}
Hence dropping $f^2\abs{S}^2$ loses too much in the estimate.
\end{corollary}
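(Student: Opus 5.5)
The identity \eqref{eq:62} is the pointwise sum of the three squared norms, so the plan is simply to expand $\abs{\nabla^A\varphi}^2$ along the decomposition \eqref{eq:61} and use the two preceding lemmas. The norm here is understood as $\sum_j \rRe\<\nabla^A_{e_j}\varphi,\nabla^A_{e_j}\varphi\>$, i.e.\ the real inner product on $T^*\S^3\otimes\Sigma$. Since the Hermitian norm of a spinor coincides with the norm from $\rRe\<\cdot,\cdot\>$, this is the usual $\abs{\nabla^A\varphi}^2$.

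First, using $\nabla^A_X\varphi=P_X+Q_X+R_X$ and bilinearity, I expand
\[
\abs{\nabla^A\varphi}^2=\abs{P}^2+\abs{Q}^2+\abs{R}^2+2\rRe\<P,Q\>+2\rRe\<P,R\>+2\rRe\<Q,R\>.
\]
By Lemma~\ref{lem8.4} all cross terms vanish.

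Next, I insert Lemma~\ref{lem8.5}. This gives $\abs{P}^2+\abs{Q}^2=\abs{\nabla f}^2+\frac12\abs{\nabla f}^2=\frac32\abs{\nabla f}^2$ and $\abs{R}^2=f^2\abs{S}^2$, which is \eqref{eq:62}.

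The only point needing care is the term $\abs{Q}^2$. Lemma~\ref{lem8.5} relies on Lemma~\ref{lem8.3}, and in particular on $\mathrm{tr}(K)=0$. Without that, a trace term $\frac13\mathrm{tr}(K)\,X\cdot\phi$ would appear in \eqref{eq:61} and would contribute an extra nonnegative summand. So I check that the decomposition \eqref{eq:61} is complete precisely because $\mathrm{tr}(K)=0$. Together with the isometry of Lemma~\ref{lem8.2}, this gives $\abs{Q}^2=f^2\abs{H}^2$ and $\abs{R}^2=f^2\abs{S}^2$.

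The closing remark, that dropping $f^2\abs{S}^2$ loses too much, is a heuristic rather than part of the identity. As I understand it, the crude bound $\abs{\nabla^A\varphi}^2\ge\frac32\abs{\nabla f}^2$ combined with the Schr\"odinger--Lichnerowicz formula \eqref{eq:84a} does not give the sharp constant. The term $f^2\abs{S}^2$ must therefore be kept, or bounded below in terms of $\rd A$, later in the argument.
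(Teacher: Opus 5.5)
Your proposal is correct and matches the paper's argument: the paper also obtains \eqref{eq:62} by expanding $\nabla^A\varphi=P+Q+R$ from \eqref{eq:61}, using Lemma~\ref{lem8.4} to remove the cross terms, and inserting the norms from Lemma~\ref{lem8.5}. Your reading of the closing remark also agrees with Remark~\ref{rem8.9}, where dropping $f^2\abs{S}^2$ leaves only the refined Kato inequality and hence the non-sharp Frank--Loss bound \eqref{eq:54}.
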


\begin{remark} \label{rem8.9}
If we drop the remainder term $f^2\abs{S}^2$ in \eqref{eq:62}, then we recover the refined Kato inequality
\eq{
    \abs{\nabla^A\varphi}^2 \ge \frac{3}{2}\abs{\nabla f}^2,
}
which leads to the non-sharp estimate \eqref{eq:54} of Frank--Loss. However, we will see that the remainder $f^2\abs{S}^2$ contributes at the same scale, though not in a direct way. This is one of the key points in the proof. 

One checks the identity \eqref{eq:62} in the model case $(\varphi,A)=(\varphi_0,A_0)$. Here $f=|\varphi_0|\equiv 1$, hence $\nabla f=0$ and \eqref{eq:62} reduces to $|\nabla^{A_0}\varphi_0|^2=|S|^2$. Using $\nabla_X\varphi_0=-\frac{1}{2}X\cdot\varphi_0$, $A_0=-\frac{3}{2}\xi_0$, and $\xi_0\cdot\varphi_0=i\varphi_0$, we compute
\eq{
    \nabla^{A_0}_X\varphi_0=\nabla_X\varphi_0-i\<A_0,X\>\varphi_0=
    \Bigl(-\frac{1}{2}X+\frac{3}{2}\<\xi_0,X\>\xi_0\Bigr)\cdot\varphi_0.
}
In the oriented orthonormal frame $\{e_1,e_2,e_3=\xi_0\}$ this gives $K=\mathrm{diag}(-\tfrac12,-\tfrac12,1)$, hence $H=0$ and $S=K$. Therefore
\eq{
    |\nabla^{A_0}\varphi_0|^2=\sum_{j=1}^3|K(e_j)|^2=\frac14+\frac14+1=\frac32=|S|^2,
}
which agrees with \eqref{eq:62}.

\end{remark}

Next, define a vector field $\xi$ by
\eq{\label{eq:63}
    \xi \coloneqq \rRe\<i\phi,e_j\cdot\phi\>e_j.
}
By Lemma~\ref{lem8.2} we have
\eq{\label{eq:83}
    \xi\cdot\phi = \rRe\<i\phi,e_j\cdot\phi\>e_j\cdot\phi = i\phi,
}
so $\abs{\xi}=1$. Fix a point and choose an oriented orthonormal frame $\{e_1,e_2,e_3\coloneqq\xi\}$ with $\nabla e_j=0$ at that point. Denote by $S_{jk}$ the components of $S$ in this frame. Since $S$ is symmetric and ${\rm tr}(S)=0$,
\eq{
    S_{11}+S_{22}+S_{33}=0.
}
Moreover, by AM--GM,
$
    S_{11}^2+S_{22}^2 \ge \frac12(S_{11}+S_{22})^2 = \frac12 S_{33}^2.
$
Consequently,
\eq{\label{eq:66}
    \abs{S}^2 &= \sum_{j,k}S_{jk}^2 = (S_{11}^2+S_{22}^2+S_{33}^2) + 2(S_{13}^2+S_{23}^2) + 2S_{12}^2 \\
    &\geq \frac{3}{2}S_{33}^2 + \frac{3}{2}(S_{13}^2+S_{23}^2) + 2S_{12}^2 \geq \frac{3}{2}(S_{13}^2+S_{23}^2+S_{33}^2),
}
with equality if and only if
\eq{\label{eq:82}
    S_{11}=S_{22}=-\frac{1}{2}S_{33}, \qquad S_{12}=S_{13}=S_{23}=0.
}

\begin{lemma}\label{lem8.9}
Set $1$-form $\alpha\coloneqq f\xi^\flat$, then
\eq{
    \abs{\rd\alpha}^2 = 4f^2(S_{13}^2+S_{23}^2+S_{33}^2).
}
Consequently, we have 
\eq{\label{eq:68}
   f^2 \abs{S}^2 \geq \frac{3}{8}\abs{\rd\alpha}^2,
}
where $\abs{\rd\alpha}^2=\sum_{j<k}\rd\alpha(e_j,e_k)^2$.
\end{lemma}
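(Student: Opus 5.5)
The plan is to compute $\rd\alpha$ explicitly in the adapted frame $\{e_1,e_2,e_3=\xi\}$ at the chosen point and show that all contributions from $\nabla f$ and from the anti-symmetric part $H$ cancel. What survives should be exactly the components $S_{13},S_{23},S_{33}$. The inequality \eqref{eq:68} then follows immediately from \eqref{eq:66}, since $f^2\abs{S}^2\ge \frac32 f^2(S_{13}^2+S_{23}^2+S_{33}^2)=\frac38\abs{\rd\alpha}^2$.

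\emph{Step 1: gauge invariance of $\xi$.} First I observe that $\xi$ is unchanged if $\nabla$ is replaced by $\nabla^A$ in its derivative. The extra terms $-i\<A,X\>\phi$ contribute
\[
\<A,X\>\bigl(\rRe\<\phi,e_j\cdot\phi\>-\rRe\<\phi,e_j\cdot\phi\>\bigr)=0
\]
to $X(\xi_j)$. Hence, with $\nabla e_j=0$ at the point,
\[
X(\xi_j)=\rRe\<iK(X)\cdot\phi,e_j\cdot\phi\>+\rRe\<i\phi,e_j\cdot K(X)\cdot\phi\>.
\]

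\emph{Step 2: express everything through Lemma~\ref{lem8.2}.} I use that $i$ commutes with Clifford multiplication and that $i\phi=\xi\cdot\phi=e_3\cdot\phi$ by \eqref{eq:83}. Then $i\,Y\cdot\phi=Y\cdot e_3\cdot\phi$. Together with the relations \eqref{eq:59} and the skew-Hermitian property, both terms reduce to real inner products of the form $\rRe\<Z\cdot\phi,W\cdot\phi\>=\<Z,W\>$ or $\rRe\<\phi,Z\cdot\phi\>=0$. I expect this to give a cross-product formula
\[
\nabla_X\xi=2\,\xi\times K(X)
\]
(up to a sign fixed by the orientation convention). In particular $\nabla_X\xi\perp\xi$, consistent with $\abs{\xi}=1$. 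Concretely, $\nabla_{e_i}\xi=2(-K_{i2},K_{i1},0)$.

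\emph{Step 3: compute $\rd\alpha=\rd f\w\xi^\flat+f\,\rd\xi^\flat$.} I evaluate the three components $\rd\alpha(e_j,e_k)=e_j(f)\xi_k-e_k(f)\xi_j+f\bigl(\<\nabla_{e_j}\xi,e_k\>-\<\nabla_{e_k}\xi,e_j\>\bigr)$.
\begin{itemize}
\item The $(1,2)$ component has no $\rd f$ term and equals $\pm2f(K_{11}+K_{22})=\mp2fS_{33}$, using $\mathrm{tr}K=0$ from Lemma~\ref{lem8.3}.
\item The $(1,3)$ and $(2,3)$ components pick up $-e_1(f)$ and $-e_2(f)$ respectively, together with $2f$ times combinations of $K_{32}$ and $K_{31}$.
\end{itemize}
Using \eqref{eq:65}, which gives $-e_1(\log f)=K_{23}-K_{32}$ and $-e_2(\log f)=K_{31}-K_{13}$, the anti-symmetric parts should cancel. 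What remains is $\pm f(K_{23}+K_{32})=\pm2fS_{23}$ and $\pm2fS_{13}$. Summing the squares then gives $\abs{\rd\alpha}^2=4f^2(S_{13}^2+S_{23}^2+S_{33}^2)$.

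The main obstacle is Step 2 together with the sign bookkeeping in Step 3. The cancellation of $\nabla f$ against $H$ is exactly what makes the identity true, so every sign must be tracked carefully. As a sanity check I would verify the model case $(\varphi_0,A_0)$ of Remark~\ref{rem8.9}. There $K=\mathrm{diag}(-\tfrac12,-\tfrac12,1)$ and $f\equiv1$, so the formula predicts $\abs{\rd\xi_0}=2$, which matches $\curl\xi_0=2\xi_0$.
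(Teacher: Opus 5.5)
Your route is the paper's: differentiate \eqref{eq:63} using \eqref{eq:64}, evaluate $\rd\alpha=\rd f\w\xi^\flat+f\,\rd\xi^\flat$ in the frame $\{e_1,e_2,e_3=\xi\}$, cancel the $\nabla f$ terms against the antisymmetric part of $K$ via \eqref{eq:65}, and finish with \eqref{eq:66}. Your Step~1 makes explicit the replacement of $\nabla$ by $\nabla^A$, which the paper uses tacitly, and your final deduction of \eqref{eq:68} is fine.

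The gap is that the one computation the lemma rests on is left at ``I expect'' and ``should cancel'', and the formula you commit to in Step~2 has the wrong sign. This sign is not a free convention. It is fixed by \eqref{eq:59} and \eqref{eq:83}, which also fix \eqref{eq:65}. The two product-rule terms coincide, because $\rRe\<\cdot,\cdot\>$ is symmetric, $e_j\cdot$ and $i$ are skew for it, and $i$ commutes with Clifford multiplication. Hence
\[
e_k(\xi_j)=2\,\rRe\<i\phi,\,e_j\cdot K(e_k)\cdot\phi\>.
\]
Now use $e_1\cdot e_2\cdot\phi=e_3\cdot\phi=i\phi$, $e_1\cdot e_3\cdot\phi=i\,e_1\cdot\phi$, and $e_1\cdot e_1\cdot\phi=-\phi$. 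This gives $e_k(\xi_1)=2K_{k2}$, $e_k(\xi_2)=-2K_{k1}$ and $e_k(\xi_3)=0$. In other words $\nabla_X\xi=2K(X)\times\xi$, which is the negative of your $2(-K_{k2},K_{k1},0)$.

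This sign is exactly where the lemma lives. Take your formula together with your own expansion $\rd\alpha(e_j,e_k)=e_j(f)\xi_k-e_k(f)\xi_j+f(\ldots)$. You get $\rd\alpha(e_2,e_3)=e_2(f)-2fK_{31}=f(K_{13}-3K_{31})$ and $\rd\alpha(e_3,e_1)=f(K_{23}-3K_{32})$, so nothing cancels.

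The cancellation only appears in your Step~3 because of a second slip. Your bullet says the $(2,3)$ component picks up $-e_2(f)$, but your own expansion gives $+e_2(f)$, and the two errors compensate.

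With the correct sign you get the paper's computation:
\[
\rd\alpha(e_1,e_2)=-2f(K_{11}+K_{22})=2fS_{33},\qquad \rd\alpha(e_2,e_3)=e_2(f)+2fK_{31}=2fS_{13},\qquad \rd\alpha(e_3,e_1)=-e_1(f)+2fK_{32}=2fS_{23}.
\]

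Your sanity check cannot detect the error, since $\abs{\rd\xi_0}$ does not see the overall sign. The signed statement $\curl\xi_0=2\xi_0$ in \eqref{eq:56} does detect it. In the model case your formula gives $\rd\xi_0^\flat(e_1,e_2)=2(K_{11}+K_{22})=-2$, that is, $\curl\xi_0=-2\xi_0$.
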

\begin{proof}
Differentiating \eqref{eq:63} and using \eqref{eq:64} yield
\eq{
    (\nabla_{e_k}\xi^\flat)(e_1) &= \rRe\<i\nabla^A_{e_k}\phi,e_1\cdot\phi\> + \rRe\<i\phi,e_1\cdot\nabla^A_{e_k}\phi\> \\
    &= \rRe\<iK(e_k)\cdot\phi,e_1\cdot\phi\> + \rRe\<i\phi,e_1\cdot K(e_k)\cdot\phi\> \\
    &= \rRe\<i(K_{k2}e_2+K_{k3}e_3)\cdot\phi,e_1\cdot\phi\> + \rRe\<i\phi,e_1\cdot(K_{k2}e_2+K_{k3}e_3)\cdot\phi\> \\
    &= 2K_{k2}\rRe\<i\phi,e_1\cdot e_2\cdot\phi\> + 2K_{k3}\rRe\<i\phi,e_1\cdot e_3\cdot\phi\> \\
    &= 2K_{k2}\rRe\<i\phi,\xi\cdot\phi\> + 2K_{k3}\rRe\<i\phi,e_1\cdot \xi\cdot\phi\> \\
    &= 2K_{k2}\rRe\<i\phi,i\phi\> + 2K_{k3}\rRe\<i\phi,ie_1\cdot\phi\> 
    = 2K_{k2}.
}
 Similarly, we have
\eq{
    (\nabla_{e_k}\xi^\flat)(e_2) = -2K_{k1}, \qquad (\nabla_{e_k}\xi^\flat)(e_3) = 0.
}
Therefore, using
\eq{
    \rd\alpha = \rd(f\xi^\flat) = \rd f\w\xi^\flat + f\rd\xi^\flat,
}
we have
\eq{
    \rd\alpha(e_1,e_2) &= f\rd\xi^\flat(e_1,e_2) = f(-2K_{11}-2K_{22}) = 2fK_{33} = 2fS_{33}, \\
    \rd\alpha(e_2,e_3) &= e_2(f) + f\rd\xi^\flat(e_2,e_3) = f(K_{13}-K_{31}) + 2fK_{31} = f(K_{13}+K_{31}) = 2fS_{13}, \\
    \rd\alpha(e_3,e_1) &= -e_1(f) + f\rd\xi^\flat(e_3,e_1) = f(K_{23}-K_{32}) + 2fK_{32} = f(K_{23}+K_{32}) = 2fS_{23},
}
where we used    \eqref{eq:65} to compute $e_1(f)$ and $e_2(f)$. The first statement follows.
In view of \eqref{eq:66}, \eqref{eq:68} is a direct consequence of it.
\end{proof}

Now we prove Theorem~\ref{thm:magnetic}.

\begin{proof}[Proof of Theorem~\ref{thm:magnetic}]
On one hand, we use the Schr\"odinger--Lichnerowicz formula \eqref{eq:84a} for $\D^A$ 
and the zero mode equation \eqref{eq:67} to obtain 
\eq{\label{eq:69}
    0 = \rRe\<(\D^A)^2\varphi,\varphi\> = \rRe\<(\nabla^A)^*\nabla^A\varphi,\varphi\> + \frac{3}{2}f^2 - \rRe\<i\,\rd A^\flat\cdot\varphi,\varphi\>.
}
On the other hand, using
\eq{
    -\frac{1}{2}\Delta(f^2) = -\frac{1}{2}\Delta\abs{\varphi}^2 = \rRe\<(\nabla^A)^*\nabla^A\varphi,\varphi\> - \abs{\nabla^A\varphi}^2,
}
we have
\eq{\label{eq:70}
    \rRe\<(\nabla^A)^*\nabla^A\varphi,\varphi\> = -\frac{1}{2}\Delta(f^2) + \abs{\nabla^A\varphi}^2 = -f\Delta f - \abs{\nabla f}^2 + \abs{\nabla^A\varphi}^2.
}
Combining \eqref{eq:62}, \eqref{eq:69}, and \eqref{eq:70} gives
\eq{\label{eq:71}
    f^2\rRe\<i\,\rd A^\flat\cdot\phi,\phi\> = \rRe\<i\rd A^\flat\cdot\varphi,\varphi\> = -f\Delta f + \frac{1}{2}\abs{\nabla f}^2 + \frac{3}{2}f^2 + f^2\abs{S}^2.
}
Set $f=u^2$. Then \eqref{eq:71} becomes
\eq{\label{eq:72}
    u^2\rRe\<i\,\rd A^\flat\cdot\phi,\phi\> = -2u\Delta u + \frac{3}{2}u^2 + u^2\abs{S}^2
=2(-u\Delta u+\frac 3 4 u^2) +f|S|^2.}
Since $\rd A^\flat\cdot\phi=\ast\rd A^\flat\cdot\phi=\curl A\cdot\phi$, the Cauchy--Schwarz inequality gives
\eq{\label{eq:73}
    u^2 \rRe\<i\,\rd A^\flat\cdot\phi,\phi\> = f \rRe\<i\curl A\cdot\phi,\phi\> \leq f\abs{\curl A}.
}
The critical Sobolev inequality \eqref{eq:S} gives
\eq{\label{eq:74}
    \int_{\S^3} (-u\Delta u + \frac 34u^2) \geq \frac 34 \,\omega_3^{\frac{2}{3}} \Big( \int_{\S^3} u^6 \Big)^{\frac{1}{3}}.
}
Integrating \eqref{eq:72} and using \eqref{eq:73}, \eqref{eq:74}, we have
\eq{\label{eq:79}
    \int_{\S^3} f\abs{\curl A} \geq \frac{3}{2}\omega_3^{\frac{2}{3}} \norm{f}_3 + \int_{\S^3} f\abs{S}^2.  
}
Using H\"older's inequality we have
\eq{\label{eq:80}
    \int_{\S^3} f\abs{\curl A} \leq \norm{f}_3\cdot\norm{\curl A}_{\frac{3}{2}}.
}
\eqref{eq:79} and \eqref{eq:80} imply
\eq{\label{eq:a2}
\norm{\curl A}_{\frac{3}{2}} \ge \frac{3}{2} \omega_3^{\frac{2}{3}} + \norm{f}_3^{-1} \int_{\S^3} f\abs{S}^2.
}
As remarked above, if we drop the last term, then the above inequality gives the estimate obtained in \cite{FL1}. We now claim 

\eq{\label{eq:claim}
\norm{f} _3^{-1} \int_{\S^3} f\abs{S}^2 \ge \frac 32  \omega_3^{\frac 23}.
}

To show the claim,
 we apply Theorem~\ref{thm:optimal-metrics-intro} to $\tilde{g}=f^2g_{{\rm st}}$ and $\alpha$. It, together with Remark \ref{negative},implies
\eq{\label{eq:75}
    \lambda_1^+(\tilde{g})\rVol(\tilde{g})^{\frac{1}{3}} \geq 2\omega_3^{\frac{1}{3}},\qquad \abs{\lambda_1^-(\tilde{g})}\,\rVol(\tilde{g})^{\frac{1}{3}} \geq 2\omega_3^{\frac{1}{3}}.    
}
Using the conformal covariance \eqref{conformal_change} we have
\eq{\label{eq:76}
    \rVol(\tilde{g}) = \int_{\S^3} f^3, \qquad \int_{\S^3}\abs{\rd\alpha}^2_{\tilde{g}}\rdV_{\tilde{g}} = \int_{\S^3} f^{-1}\abs{\rd\alpha}^2, \qquad  \int_{\S^3}\abs{\alpha}^2_{\tilde{g}}\rdV_{\tilde{g}} = \int_{\S^3} f\abs{\alpha}^2 = \int_{\S^3} f^3.
}
From the Rayleigh quotient, we have
\eq{\label{eq:93}
    \frac{\int_{\S^3}\abs{\rd\alpha}^2_{\tilde{g}}\rdV_{\tilde{g}}}{\inf_\phi\int_{\S^3}\abs{\alpha+\rd\phi}^2_{\tilde{g}}\rdV_{\tilde{g}}}  \ge  \min\{\lambda_1^+(\tilde{g})^2,\lambda_1^-(\tilde{g})^2\},}
where by \eqref{eq:76}
\eq{
    \inf_\phi\int_{\S^3}\abs{\alpha+\rd\phi}^2_{\tilde{g}}\rdV_{\tilde{g}} = \inf_\phi\int_{\S^3}f\abs{\alpha+\rd\phi}^2 = \inf_\phi\int_{\S^3} \Big(f\abs{\alpha}^2 + 2\<f\alpha,\rd\phi\> + f\abs{\rd\phi}^2 \Big).
}
Note that using Proposition~\ref{basic_form} (4) and the zero mode equation we have
\eq{
    \rd^*(f\alpha) &= \rd^*(f^2\xi^\flat) = -e_j\ip\nabla_{e_j}(f^2\xi^\flat) = -e_j(\rRe\<ie_j\cdot\varphi,\varphi\>) \\
    &= -\rRe\<ie_j\cdot\nabla_{e_j}\varphi,\varphi\> - \rRe\<ie_j\cdot\varphi,\nabla_{e_j}\varphi\> \\
    &= -\rRe\<i\D\varphi,\varphi\> + \rRe\<i\varphi,\D\varphi\> \\
    &= \rRe\<A\cdot\varphi,\varphi\> + \rRe\<\varphi,A\cdot\varphi\> = 0.
}
Hence
\eq{
    \int_{\S^3} \<f\alpha,\rd\phi\> = \int_{\S^3} \<\rd^*(f\alpha),\phi\> = 0.
}
It follows
\eq{
    \inf_\phi\int_{\S^3}\abs{\alpha+\rd\phi}^2_{\tilde{g}}\rdV_{\tilde{g}} = \int_{\S^3} f\abs{\alpha}^2 = \int_{\S^3} f^3.
}
Therefore, by \eqref{eq:75} and \eqref{eq:93}, we have
\eq{\label{eq:77}
\frac{\int_{\S^3} f^{-1}\abs{\rd\alpha}^2}{\int_{\S^3} f^3}=
\frac{\int_{\S^3}\abs{\rd\alpha}^2_{\tilde{g}}\rdV_{\tilde{g}}}{\inf_\phi\int_{\S^3}\abs{\alpha+\rd\phi}^2_{\tilde{g}}\rdV_{\tilde{g}}}  \ge  \min\{\lambda_1^+(\tilde{g})^2,\lambda_1^-(\tilde{g})^2\} \ge \Big(2\omega_3^{\frac 13} \norm {f}_3^{-1}\Big)^2.
}
Using \eqref{eq:68} we have
\eq{\label{eq:78}
    \int_{\S^3} f\abs{S}^2 \geq \frac{3}{8}\int_{\S^3} f^{-1}\abs{\rd\alpha}^2
}
Combining \eqref{eq:75}, \eqref{eq:76}, \eqref{eq:77}, and \eqref{eq:78} yields
\eq{\label{eq:81}
    \int_{\S^3} f\abs{S}^2 \geq \frac{3}{8}\lambda_1^+(\tilde{g})^2\norm{f}_3^3 \geq \frac{3}{8}\Big( 2\omega_3^{\frac{1}{3}}\norm{f}_3^{-1} \Big)^2\norm{f}_3^3 = \frac{3}{2}\omega_3^{\frac{2}{3}}\norm{f}_3.
}
This finishes the proof of claim \eqref{eq:claim} and hence finishes the proof of
\eq{
    \norm{\curl A}_{\frac{3}{2}} \geq 3\omega_3^{\frac{2}{3}}.
}

From the proof above, one can also read off the equality case: equality holds if and only if
\begin{enumerate}
    \item equality holds in \eqref{eq:66}, i.e. \eqref{eq:82} holds;
    \item equality holds in \eqref{eq:73}, i.e. $i\rd A^\flat\cdot\phi=i(\curl A)\cdot\phi$ is pointwise proportional to $\phi$;
    \item equality holds in \eqref{eq:74}, i.e. $u$ is constant modulo conformal transformations;
    \item equality holds in \eqref{eq:80}, i.e. $f^3 = c\abs{\curl A}^{\frac{3}{2}}$ for some constant $c$;
    \item equality holds in \eqref{eq:75}, i.e. $\tilde g$ is round modulo conformal transformations;
    \item equality holds in \eqref{eq:93}, i.e. $\alpha$ is a first eigenform of $\curl$.
\end{enumerate}
Modulo conformal and gauge transformations, (3) and (5) imply that $f=\abs{\varphi}$ is constant, hence $\alpha=f\,\xi^\flat$ is proportional to $\xi^\flat$. Then (4) implies that $\abs{\curl A}$ is constant, and (6) implies that $\xi^\flat$ is a Killing $1$-form. Moreover, (2) and \eqref{eq:83} imply that $\curl A$ is proportional to $\xi$, hence an eigenfield of $\curl$, and so is $A$. The zero mode equation then shows that $\varphi$ is an eigenspinor of $\D$. Finally, (1) implies that $K=S=\mathrm{diag}\{-\tfrac{1}{2}S_{33},-\tfrac{1}{2}S_{33},S_{33}\}$. Together with \eqref{eq:64}, we conclude that (modulo conformal transformations) $\varphi$ is a Killing spinor and $A$ is a multiple of the Reeb field associated with $\varphi$. This gives the equality case in Theorem~\ref{thm:magnetic}.
\end{proof}

\subsection{The regularization}\label{sec:regularization}
We briefly justify the argument above when $\varphi$ has zeros. Since $f=\abs{\varphi}$ is Lipschitz, we have $\nabla f=0$ a.e. on the zero set $\{f=0\}$. Thus all pointwise identities extend trivially across $\{f=0\}$, except for terms that involve division by $f$ or the conformal metric $\tilde g=f^2g_{\rm st}$, namely
\eq{
    f\abs{S}^2, \qquad f^{-1}\abs{\rd\alpha}^2, \qquad u=\sqrt{f}, \qquad \tilde g=f^2g_{\rm st}.
}
We handle these by the  regularization
\eq{
    f_\epsilon\coloneqq \sqrt{\abs{\varphi}^2+\epsilon^2}=\sqrt{f^2+\epsilon^2},\qquad \epsilon>0
}
and follow the idea in \cite{Frank_Loss_2024}. Actually our case is even easier.
It is easy to see that  
\eq{\label{eq:85}
    \nabla^A_X(f_\epsilon^{-1}\varphi)= f_\epsilon^{-1}\nabla^A_X\varphi - f_\epsilon^{-3}f\,X(f)\,\varphi.
}
Using \eqref{eq:62}, \eqref{eq:85} and $\D^A\varphi=0$, and testing \eqref{eq:84a} against $f_\epsilon^{-1}\varphi$, we obtain
\eq{
    \int_{\S^3} f_\epsilon^{-1}\rRe\<i\rd A^\flat\cdot\varphi,\varphi\>
    = \frac{3}{2}\int_{\S^3} f_\epsilon^{-1}\abs{\nabla f}^2 + \int_{\S^3}f_\epsilon^{-1}f^2\abs{S}^2 - \int_{\S^3}f_\epsilon^{-3}f^2\abs{\nabla f}^2 + \frac{3}{2}\int_{\S^3}f_\epsilon^{-1}f^2.
}
Observe that
\eq{
    \frac{3}{2}f_\epsilon^{-1}\abs{\nabla f}^2 - f_\epsilon^{-3}f^2\abs{\nabla f}^2
    = \frac{3}{2}\epsilon^2f_\epsilon^{-3}\abs{\nabla f}^2 + \frac{1}{2}f_\epsilon^{-3}f^2\abs{\nabla f}^2,
}
so
\eq{\label{eq:86}
    \int_{\S^3} f_\epsilon^{-1}\rRe\<i\rd A\cdot\varphi,\varphi\>
    = \frac{3}{2}\int_{\S^3}\epsilon^2f_\epsilon^{-3}\abs{\nabla f}^2 + \frac{1}{2}\int_{\S^3}f_\epsilon^{-3}f^2\abs{\nabla f}^2 + \int_{\S^3}f_\epsilon^{-1}f^2\abs{S}^2 + \frac{3}{2}\int_{\S^3}f_\epsilon^{-1}f^2.
}
Moreover,
\eq{
    \int_{\S^3} f_\epsilon^{-1}\rRe\<i\rd A\cdot\varphi,\varphi\>
    \le \int_{\S^3} f\abs{\curl A}
    \le \norm{f}_3\,\norm{\curl A}_{\frac{3}{2}} < \infty,
}
and as $\epsilon\to0$,
\eq{
    f_\epsilon^{-3}f^2\abs{\nabla f}^2 \nearrow \mathbf{1}_{\{f>0\}}f^{-1}\abs{\nabla f}^2,\qquad
    f_\epsilon^{-1}f^2\abs{S}^2 \nearrow \mathbf{1}_{\{f>0\}}f\abs{S}^2,\qquad
    f_\epsilon^{-1}f^2 \nearrow f.
}
After dropping the nonnegative first term, by dominated (and monotone) convergence, all terms on the right-hand side of \eqref{eq:86} converge. Therefore
\eq{\label{eq:90}
    \norm{f}_3\,\norm{\curl A}_{\frac{3}{2}} \geq \frac{1}{2}\int_{\S^3}f^{-1}\abs{\nabla f}^2 + \frac{3}{2}\int_{\S^3}f + \int_{\S^3}f\abs{S}^2.
}
To estimate the first two integrals, set $f=u^2$ and use the critical Sobolev inequality:
\eq{\label{eq:91}
    \frac{1}{2}\int_{\S^3}f^{-1}\abs{\nabla f}^2 + \frac{3}{2}\int_{\S^3}f
    = 2\int_{\S^3}\abs{\nabla u}^2 + \frac{3}{2}\int_{\S^3}u^2
    \ge \frac{3}{2}\omega_3^{\frac{2}{3}}\Big(\int_{\S^3}u^6\Big)^{\frac{1}{3}}
    = \frac{3}{2}\omega_3^{\frac{2}{3}}\norm{f}_3.
}
For the last term in \eqref{eq:90}, define
\eq{
    \tilde g_\epsilon\coloneqq f_\epsilon^2g,
}
and set
\eq{
    \alpha\coloneqq f\xi^\flat\ \text{on }\{f>0\},\qquad \alpha\coloneqq 0\ \text{on }\{f=0\}.
}
Then $\rd\alpha=0$ a.e. on $\{f=0\}$, and \eqref{eq:77} becomes
\eq{
    \int_{\S^3} f_\epsilon^{-1}\abs{\rd\alpha}^2 \geq 4\,\omega_3^{\frac{2}{3}}\Big(\int_{\S^3} f_\epsilon^3\Big)^{-\frac{2}{3}}\cdot \inf_\phi\int_{\S^3}f_\epsilon\abs{\alpha+\rd\phi}^2,
}
where since $f_\epsilon\geq f$ and $\rd^*(f\alpha)=0$,
\eq{
    \inf_\phi\int_{\S^3}f_\epsilon\abs{\alpha+\rd\phi}^2 \geq \inf_\phi\int_{\S^3}f\abs{\alpha+\rd\phi}^2 = \int_{\S^3} f^3.
}
Hence
\eq{\label{eq:87}
    \int_{\S^3} f_\epsilon^{-1}\abs{\rd\alpha}^2 \geq 4\,\omega_3^{\frac{2}{3}}\Big(\int_{\S^3} f_\epsilon^3\Big)^{-\frac{2}{3}}\cdot \int_{\S^3} f^3.
}
Using \eqref{eq:68}, we have
\eq{\label{eq:88}
    f_\epsilon^{-1}\abs{\rd\alpha}^2 \nearrow \mathbf{1}_{\{f>0\}}f^{-1}\abs{\rd\alpha}^2
    \le \mathbf{1}_{\{f>0\}}\cdot \frac{8}{3}f\abs{S}^2
    \qquad\text{as }\epsilon\to0,
}
while
\eq{\label{eq:89}
    \Big(\int_{\S^3} f_\epsilon^3\Big)^{\frac{1}{3}} \to \norm{f}_3\qquad\text{as }\epsilon\to0.
}
Combining \eqref{eq:87} and \eqref{eq:89} gives
\eq{\label{eq:92}
    \int_{\S^3} f\abs{S}^2 \geq \frac{3}{2}\,\omega_3^{\frac{2}{3}}\norm{f}_3.
}
Finally, \eqref{eq:90}, \eqref{eq:91}, and \eqref{eq:92} imply
\eq{
    \norm{\curl A}_{\frac{3}{2}} \geq 3\,\omega_3^{\frac{2}{3}},
}
which is the desired inequality.

Since the proof uses an approximation argument, the classification of the equality case requires some care. For details, we refer the interested reader to \cite{Frank_Loss_2024}.

\appendix
\section{Weighted Yamabe problems}\label{subsection3.2}

Theorem~\ref{thm:GN-Sn} is closely related to \cite{Case15}, where Case introduced the weighted Yamabe quotient on a closed smooth metric measure space $(M^n,g,e^{-\phi}\rdV_g,m)$. For $m\geq0$ and $0<f\in C^\infty(M)$, set
\eq{
    Q_{g,\phi,m}(f) \coloneqq
    \frac{
        \Big( \int_M \bigl(\abs{\nabla f}^2 + \tfrac{m+n-2}{4(m+n-1)}R^m_\phi f^2\bigr)\, e^{-\phi}\rdV_g \Big)
        \Big( \int_M \abs{f}^{\frac{2(m+n-1)}{m+n-2}}e^{\frac{\phi}{m}}\, e^{-\phi}\rdV_g \Big)^{\frac{2m}{n}}
    }{
        \Big( \int_M \abs{f}^{\frac{2(m+n)}{m+n-2}}\, e^{-\phi}\rdV_g \Big)^{\frac{2m+n-2}{n}}
    },
}
where $R^m_\phi$ is the weighted scalar curvature.
This quotient is weighted conformally invariant in the sense that
\eq{
    Q_{\tilde{g},\tilde{\phi},m}(\tilde{f}) = Q_{g,\phi,m}(f),
}
where
\eq{
    \tilde{g}=\rho^2g, \qquad \tilde{\phi}=\phi-m\log\rho, \qquad \tilde{f}=\rho^{-\frac{m+n-2}{2}}f.
}

The weighted Yamabe constant is defined by
\eq{
    \Lambda(M,[g,\phi,m]) \coloneqq \inf\Bigl\{Q_{g,\phi,m}(f) \,\Big|\, 0<f\in C^\infty(M)\Bigr\}.
}
Here $[g,\phi,m]$ denotes the weighted conformal class in the above sense. When $m=0$ one recovers the usual Yamabe constant (interpreting $Q_{g,\phi,m}$ in the appropriate limiting sense). In our setting ($M=\S^n$, $m=1$, $\phi=0$) the quotient specializes to \eqref{eq:GN-Sn}.

Case proved (\cite[Theorem 1.2]{Case15}) that
\eq{\label{eq:Case}
    \Lambda(M,[g,\phi,m]) \leq \Lambda(\R^n,[g_{\R^n},0,m]).
}
Moreover, \cite[Theorem 1.3]{Case15} shows that if $m\in\mathbb{N}\cup\{0\}\cup\{\infty\}$, then equality in \eqref{eq:Case} forces
\eq{
    m\in\{0,1\}, \quad (M,[g,\phi,m])=(\S^n,[g_{{\rm st}},0,m]).
}
Thus, for such $m$, Case obtained a necessary condition for equality. When $m=1$, our inequality \eqref{eq:GN-Sn} implies
\[
\Lambda(\S^n,[g_{{\rm st}},0,1])=\Lambda(\R^n,[g_{\R^n},0,1]),
\]
which provides the corresponding sufficiency statement on the sphere. In contrast, for integers $m\ge 2$ it was proved in \cite{Case15} that
\[
\Lambda(\S^n,[g_{{\rm st}},0,m])<\Lambda(\R^n,[g_{\R^n},0,m]).
\]
Case also showed that
\[
\Lambda(\S^n,[g_{{\rm st}},0,\tfrac12])=\Lambda(\R^n,[g_{\R^n},0,\tfrac12]).
\]
It is natural to ask whether
\[
\Lambda(\S^n,[g_{{\rm st}},0,m])=\Lambda(\R^n,[g_{\R^n},0,m]) \Longleftrightarrow m\le 1.
\]
Here $\Lambda(\R^n,[g_{\R^n},0,m])$ is the sharp constant obtained by Del~Pino--Dolbeault \cite{DelPino02}.

\section{Sharp lower bounds of the original Faddeev-Skyrme model}

In this appendix, we give further applications to 
sharp lower bounds for the original Faddeev--Skyrme model for fields 
$u:\R^3\to\S^2$. We also recall that $\omega_3=\rVol(\S^3)=2\pi^2$.

\subsection{The Faddeev--Skyrme model}
The original Faddeev--Skyrme model \cite{Faddeev} is defined on $\R^3$ by
\eq{
    \mathcal{FS}_{{\rm Euc}}(u) \coloneqq \int_{\R^3}\Big( \abs{\rd u}^2 + \frac{1}{2}\abs{u^*\omega_{\S^2}}^2 \Big),
    \qquad u:\R^3\to\S^2\ \text{smooth}.
}
It satisfies the Vakulenko--Kapitansky bound \cite{VK79}
\eq{\label{eq:VK}
    \mathcal{FS}_{{\rm Euc}}(u) \geq C_{\text{VK}}\,\abs{Q(u)}^{\frac{3}{4}},
}
where $C_{{\rm VK}}>0$ is a universal constant and $Q$ is the Hopf invariant
\eq{
    Q(u) \coloneqq \frac{1}{16\pi^2}\int_{\R^3}\alpha\w\rd\alpha,
}
with $\rd\alpha=u^*\omega_{\S^2}$ for some $\alpha\in\Omega^1(\R^3)$. 
For finite energy fields, $Q$ is well-defined and is a homotopy invariant. We only consider $Q\neq0$.
The canonical Hopf map $\pi\circ\Psi$ satisfies $Q(\pi\circ\Psi)=1$, where 
$\Psi:\R^3\to\S^3$ denotes the inverse stereographic projection.

The sublinear growth in \eqref{eq:VK} is one of the key features of the model. 
In the original arguments it arises from delicate Sobolev inequalities; 
below we explain how it fits naturally into the curl--Sobolev framework. 
Lin--Yang \cite{LinYang04} later proved a positive (non-sharp) lower bound, 
confirming that the energy grows sublinearly as $\abs{Q(u)}\to\infty$. 
From the physics perspective, this behavior is commonly attributed to the 
creation of additional charge through knotting and linking of solitons.

Inequality \eqref{eq:VK} was improved in \cite{KunduRybakov82} to the explicit estimate
\eq{\label{eq:124}
    C_{\text{VK}}\geq 3^{\frac{3}{8}}\,8\sqrt{2}\,\pi^2,
}
and Ward conjectured in \cite{Ward99} that in fact
\eq{\label{eq:ward_conj}
    C_{\text{VK}} \geq 32\pi^2.
}
See also \cite{Harland13MassivePions}. Numerical simulations suggest that minimizers 
exceed Ward's conjectured bound by about $20\%$; see \cite{Ward99,Sutcliffe07}.

Following \cite{LinYang04}, we define
\eq{
E_m\coloneqq \inf\{\mathcal{FS}_{\text{Euc}}(u)\,|\, 
\mathcal{FS}_{\text{Euc}}(u)<\infty, \, Q(u)=m\}.
}
Ward \cite{Ward99} also proved that, in the homotopy class of $\pi\circ\Psi$ (i.e.
$Q=1$), one has the upper bound
\eq{\label{eq:116}
    E_1 \leq 32\sqrt{2}\pi^2.
}
Here $E_1$ follows the notation of \cite{LinYang04}; see \cite{LinYang07} for a clean proof.
Moreover, Lin--Yang \cite{LinYang07} showed --using \eqref{eq:124} and \eqref{eq:116} together with 
their inequality from \cite{LinYang04}-- that $E_1$ is attained.

As an application of Theorem~\ref{main_thm}, we confirm Ward's conjecture.

\begin{theorem}\label{propB.1}
Ward's conjecture is true. Namely,
\eq{\label{eq:123}
    \mathcal{FS}_{{\rm Euc}}(u) > 32\pi^2\abs{Q(u)}^{\frac{3}{4}}.
} In particular $E_{m} \ge 32\pi^2|m|^{\frac 34}$.
Moreover, we have
\eq{E_1 >32\pi^2.}
\end{theorem}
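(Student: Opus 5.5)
The plan is to run the same reduction as in the proof of Theorem~\ref{thm:FS-minimizers}, this time on $\R^3$, and use the Euclidean form of Theorem~\ref{main_thm} (Corollary~\ref{thm_form_3D}, inequality \eqref{eq:curl}) in place of the spherical one. The Euclidean form is available because $J$ is conformally invariant.

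\emph{Step 1 (reduction to a potential).} Let $u$ have finite energy. Then $u^*\omega_{\S^2}\in L^1\cap L^2$ and it is closed, so we can take the Coulomb potential $\alpha=\curl^{-1}(u^*\omega_{\S^2})$ given by the Biot--Savart law, with $\rd\alpha=u^*\omega_{\S^2}$. This $\alpha$ lies in the function space where \eqref{eq:curl} applies (by density, or by pulling back to $\S^3$, where the result is stated in Sobolev spaces). With this choice, $\int_{\R^3}\alpha\w\rd\alpha=16\pi^2Q(u)$. The pointwise AM--GM estimate \eqref{eq:23}, $\abs{\rd u}^2\ge 2\abs{\rd\alpha}$, carries over verbatim to $\R^3$. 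Writing $a=\abs{\rd\alpha}$, the elementary identity
\[
2a+\tfrac12a^2-2a^{3/2}=\tfrac12 a\bigl(\sqrt a-2\bigr)^2\ge 0
\]
then gives
\[
\mathcal{FS}_{\rm Euc}(u)\ge \int_{\R^3}\Bigl(2\abs{\rd\alpha}+\tfrac12\abs{\rd\alpha}^2\Bigr)\ge 2\int_{\R^3}\abs{\rd\alpha}^{3/2}.
\]

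\emph{Step 2 (sharp curl--Sobolev).} If $Q(u)>0$, then \eqref{eq:curl} gives $\bigl(\int\abs{\rd\alpha}^{3/2}\bigr)^{4/3}\ge 2\omega_3^{1/3}\cdot 16\pi^2 Q$. If $Q(u)<0$, compose $u$ with an orientation-reversing reflection of $\R^3$. This changes the sign of $Q$ and leaves all the energies unchanged. Equivalently, apply Remark~\ref{negative}. Now use $\omega_3=2\pi^2$:
\[
\bigl(2\,(2\pi^2)^{1/3}\,16\pi^2\bigr)^{3/4}=2^{15/4}\,2^{1/4}\,\pi^2=16\pi^2 .
\]
Hence $\mathcal{FS}_{\rm Euc}(u)\ge 32\pi^2\abs{Q(u)}^{3/4}$. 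Taking the infimum over $u$ with $Q(u)=m$ gives $E_m\ge 32\pi^2\abs{m}^{3/4}$.

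\emph{Step 3 (strictness).} This is the main point requiring care. Equality would force equality in the AM--GM step, that is, $\abs{\rd\alpha}\in\{0,4\}$ a.e. It would also force $\alpha$ to be, modulo $\ker\rd$ and $\mathrm{Conf}^+$, the stereographic pullback of a positive Killing form. For such a form, $\abs{\curl\alpha}$ is a positive smooth function decaying like $\abs{x}^{-4}$ at infinity (compare the explicit extremal in Corollary~\ref{thm_form_3D}). So it is nowhere zero and nonconstant, which contradicts $\abs{\rd\alpha}\in\{0,4\}$. Alternatively, $\abs{\rd\alpha}\equiv 4$ on a set of infinite measure contradicts $\int\abs{\rd\alpha}^2<\infty$. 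Either way, the inequality is strict for every finite-energy $u$ with $Q\neq0$.

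For $E_1$, a strict pointwise inequality alone does not survive taking an infimum. Here I would invoke the fact (Lin--Yang \cite{LinYang07}) that $E_1$ is attained by some $u_1$. Then $E_1=\mathcal{FS}_{\rm Euc}(u_1)>32\pi^2$. The only technical obstacle I anticipate is justifying that the Biot--Savart potential belongs to the admissible class for \eqref{eq:curl}. I would handle this by smooth approximation of $\rd\alpha$ in $L^{3/2}$, using the continuity of $\int\alpha\w\rd\alpha$ under that approximation.
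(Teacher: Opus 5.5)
Your proposal is correct and follows essentially the same route as the paper: the pointwise bound $\abs{\rd u}^2\ge 2\abs{\rd\alpha}$, the AM--GM step $2a+\tfrac12 a^2\ge 2a^{3/2}$, the sharp curl--Sobolev inequality (you use its $\R^3$ form directly while the paper pulls back to $\S^3$, which is the same by conformal invariance), and Lin--Yang's attainment of $E_1$ for the final claim. Your main strictness argument, via the equality case of Theorem~\ref{main_thm}, is more careful than the paper's one-line remark: equality in AM--GM only gives $\abs{\rd\alpha}\in\{0,4\}$ a.e., not $\abs{\rd\alpha}\equiv 4$, so your ``alternative'' integrability argument would not suffice on its own, whereas the everywhere-positivity and $\abs{x}^{-4}$ decay of $\abs{\curl\alpha}$ for conformal pullbacks of Killing forms does close the gap.
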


\begin{proof}
Using the pointwise estimate $\abs{\rd u}^2\geq 2\abs{\rd\alpha}$ and the AM--GM inequality,
\eq{\label{eq:117}
    \mathcal{FS}_{{\rm Euc}}(u)
    \geq \int_{\R^3}\Bigl( 2\abs{\rd\alpha} + \tfrac12\abs{\rd\alpha}^2 \Bigr)
    \geq 2\int_{\R^3}\abs{\rd\alpha}^{\frac{3}{2}}
    = 2\int_{\S^3}\abs{\rd\tilde{\alpha}}^{\frac{3}{2}},
}
where $\tilde{\alpha}=(\Psi^{-1})^*\alpha$, and in the last step we used conformal invariance.
By Theorem~\ref{main_thm},
\eq{\label{eq:118}
    \int_{\S^3}\abs{\rd\tilde{\alpha}}^{\frac{3}{2}}
    \geq \Bigl(2\,\omega_3^{\frac{1}{3}}\Abs{\int_{\S^3}\tilde{\alpha}\w\rd\tilde{\alpha}}\Bigr)^{\frac{3}{4}}.
}
Combining \eqref{eq:117} and \eqref{eq:118} yields
\eq{
    \mathcal{FS}_{{\rm Euc}}(u)
    \geq 2^{\frac{7}{4}}\omega_3^{\frac{1}{4}}\Abs{\int_{\S^3}\tilde{\alpha}\w\rd\tilde{\alpha}}^{\frac{3}{4}}
    = 32\pi^2\abs{Q(u)}^{\frac{3}{4}}.
}
If equality held, then in particular $\abs{\rd\alpha}\equiv 4$, which is impossible on $\R^3$
by integrability considerations. Hence the inequality is strict.

Together with the attainability of $E_1$ proved by Lin--Yang \cite{LinYang07}, we obtain
\eq{\label{eq:126}
    E_1 > 32\pi^2.
}
\end{proof}
Thus, for $Q=1$ the constant $32\pi^2$ is not the sharp lower bound within that homotopy class.
Nevertheless, we expect $32\pi^2$ to be optimal when taking the infimum over all fields and all $Q$.

As a direct application, we answer a question left in \cite{LinYang04, LinYang07}.

\begin{corollary} Let $|m|\ge 2$, we have \eq{E_1<E_m.\label{eq:Em}}    
\end{corollary}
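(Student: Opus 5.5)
We need $E_1<E_m$ for $|m|\ge2$. Theorem~\ref{propB.1} gives $E_m\ge 32\pi^2|m|^{3/4}$, and from \eqref{eq:116} we have $E_1\le 32\sqrt2\pi^2$. The plan is to compare these two bounds directly. This settles everything except $|m|=2$. Indeed, for $|m|\ge3$,
\[
32\pi^2|m|^{3/4}\ge 32\pi^2\, 3^{3/4}>32\sqrt2\,\pi^2\ge E_1,
\]
since $3^{3/4}\approx 2.28>\sqrt2\approx1.414$.

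For $|m|=2$ we have $2^{3/4}\approx1.68>\sqrt2$. So again
\[
E_2\ge 32\pi^2\,2^{3/4}>32\sqrt2\,\pi^2\ge E_1 .
\]
In fact $|m|^{3/4}>\sqrt2$ holds exactly when $|m|>2^{2/3}\approx1.587$, so every $|m|\ge2$ is covered uniformly by this one comparison.

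Written uniformly, for $|m|\ge 2$:
\[
E_m\ge 32\pi^2|m|^{3/4}\ge 32\pi^2 2^{3/4}>32\pi^2 2^{1/2}\ge E_1 .
\]
The only inputs are Theorem~\ref{propB.1} and Ward's upper bound \eqref{eq:116}, both available above. The inequality is strict because $2^{3/4}>2^{1/2}$, so we never need the strict bound $E_1>32\pi^2$ or any attainment statement. The case $m$ negative follows by symmetry, since Theorem~\ref{propB.1} uses $|Q|$.

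There is no real obstacle here. The one point to check is that \eqref{eq:116} holds in the normalization of $\mathcal{FS}_{\rm Euc}$ used in Theorem~\ref{propB.1}, namely with the coefficient $\frac12$ on $|u^*\omega_{\S^2}|^2$ and with $Q(\pi\circ\Psi)=1$. I would verify this by evaluating $\mathcal{FS}_{\rm Euc}(\pi\circ\Psi)$ via conformal invariance, and I expect that computation to reproduce the $32\sqrt2\pi^2$ of Ward's upper bound.
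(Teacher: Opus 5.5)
Your argument is correct and is the paper's proof: the chain $E_1\le 32\sqrt2\,\pi^2<32\pi^2\,2^{3/4}\le 32\pi^2|m|^{3/4}\le E_m$, using Ward's upper bound \eqref{eq:116} and Theorem~\ref{propB.1}. One correction to your proposed normalization check: $\mathcal{FS}_{\rm Euc}$ is not conformally invariant. A direct computation gives $\mathcal{FS}_{\rm Euc}(\pi\circ\Psi)=32\pi^2+\tfrac12\cdot 32\pi^2=48\pi^2$, and the value $32\sqrt2\,\pi^2$ appears only after optimizing over dilations $x\mapsto\lambda x$, which preserve $Q$. Since \eqref{eq:116} is quoted from the literature, this does not affect your proof.
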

\begin{proof} For $|m|>3$, \eqref{eq:Em} was proved in \cite{LinYang04}. Now the sharp lower bound \eqref{eq:123}, together with the upper bound \eqref{eq:116}, yields
\eq{
E_1 \le 32\sqrt {2} \pi^2 < 32 \pi^2 2^{\frac 34} \le 32\pi^2 |m|^\frac 34 
\le E_m.}   
\end{proof}

One can also improve the upper bound \eqref{eq:116}; in particular,\eq{\label{eq:119}    E_1 \leq 8\sqrt{30}\pi^2 < 32\sqrt{2}\pi^2.}We omit the proof.

To determine the value of $E_1$ is a very difficult problem.

\subsection{Modified Faddeev--Skyrme models}

There are various modified models for the Faddeev--Skyrme model. One of the motivations is that there is no analytic solutions in the Faddeev--Skyrme model. Here we discuss a little about the following three models.
\begin{enumerate}
  
\item 
 The Nicole model \cite{Nicole78} is defined
by
\eq{
    E_{{\rm Ni}}(u) \coloneqq \frac 1{\sqrt 2}\int_{\R^3} \abs{\rd u}^3, \quad u:\R^3\to\S^2.
}
  \item 
The Aratyn--Ferreira--Zimerman (AFZ) model \cite{AFZ99} is defined by 
\eq{
    E_{{\rm AFZ}}(u) \coloneqq 2\int_{\R^3} \abs{u^*\omega_{\S^2}}^{\frac{3}{2}}, \quad u:\R^3\to\S^2.
}

\item The conformal Faddeev--Skyrme model defined by Gillard \cite{Gillard10}, 
which interpolates between the AFZ and Nicole models:
\eq{
    E_{{\rm CFS}}(u) \coloneqq \cos^2\theta\,E_{{\rm Ni}}(u) + \sin^2\theta\,E_{{\rm AFZ}}(u),
    \qquad u:\R^3\to\S^2,\quad 0\leq\theta\leq\frac{\pi}{2}.
}
\end{enumerate}
The normalizations (the factor $2$ in (1) and $\frac{1}{\sqrt 2}$ in (2)) are chosen so that
all three models share the same sharp lower bound in Theorem~\ref{thmB.6}.

Unlike the original Faddeev--Skyrme model, these modified models admit explicit analytic solutions;
in particular, the AFZ model even has infinitely many solutions.

In contrast to the Faddeev--Skyrme model, until now there had been no established 
Vakulenko--Kapitansky type bound for these conformally invariant variants.
If such a bound exists, it was conjectured that the optimal constant equals $32\pi^2$
for both the Nicole model \cite{Nicole78} and the AFZ model \cite{AFZ99}; see also \cites{Gillard10,Sutcliffe07}.
Our main result implies the existence of such a bound and confirms this conjecture.
Since the conformal Faddeev--Skyrme model contains the other two as special cases,
we state the result in that generality.

\begin{theorem}\label{thmB.6}
For every $0\leq\theta\leq\frac{\pi}{2}$ and every smooth $u:\R^3\to\S^2$,
\eq{
    E_{{\rm CFS}}(u) \geq 32\pi^2\abs{Q(u)}^{\frac{3}{4}}.
}
Moreover, equality holds if and only if $u=h\circ\pi\circ\Phi\circ\Psi$, where
$\Phi\in{\rm Conf}(\S^3)$, $\Psi:\R^3\to\S^3$ is the inverse stereographic projection,
and $h:\S^2\to\S^2$ satisfies either $h^*\omega_{\S^2}=\omega_{\S^2}$ (when $\theta=\frac{\pi}{2}$),
or $h$ is an isometry (when $0\leq\theta<\frac{\pi}{2}$).
\end{theorem}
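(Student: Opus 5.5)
The plan is to reduce both energies to the curl--Sobolev quantity $\int\abs{\rd\alpha}^{3/2}$ and then apply Theorem~\ref{main_thm} through conformal invariance.

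\textbf{Reduction of $E_{\rm Ni}$ and $E_{\rm AFZ}$.} Let $u^*\omega_{\S^2}=\rd\alpha$. The pointwise estimate \eqref{eq:23}, $\abs{\rd u}^2\ge 2\abs{\rd\alpha}$, gives
\[
E_{\rm Ni}(u)=\tfrac{1}{\sqrt2}\int_{\R^3}\abs{\rd u}^3\ge \tfrac{1}{\sqrt2}\,2^{3/2}\int_{\R^3}\abs{\rd\alpha}^{3/2}=2\int_{\R^3}\abs{\rd\alpha}^{3/2}.
\]
For the other model we have $E_{\rm AFZ}(u)=2\int_{\R^3}\abs{\rd\alpha}^{3/2}$ exactly. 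Since $E_{\rm CFS}$ is a convex combination of the two, in every case $E_{\rm CFS}(u)\ge 2\int_{\R^3}\abs{\rd\alpha}^{3/2}$.

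\textbf{Lower bound.} As in the proof of Theorem~\ref{propB.1}, I transfer to $\S^3$ by $\tilde\alpha=(\Psi^{-1})^*\alpha$, using the conformal invariance of both functionals in \eqref{functionals}. If $\int\tilde\alpha\w\rd\tilde\alpha<0$, I reverse orientation (this flips the sign of curl and leaves $\abs{\rd\alpha}$ unchanged), or equivalently use the negative Killing forms of Remark~\ref{negative}. Theorem~\ref{main_thm} then yields
\[
E_{\rm CFS}(u)\ge 2\bigl(2\omega_3^{1/3}\cdot 16\pi^2\abs{Q}\bigr)^{3/4}=32\pi^2\abs{Q}^{3/4},
\]
where $\omega_3=2\pi^2$. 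The arithmetic is the same as in Theorem~\ref{propB.1}.

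\textbf{Equality case.} I expect this to be the main obstacle. Equality in Theorem~\ref{main_thm} forces $\tilde\alpha=\Phi^*\xi+\gamma$, where $\xi$ is a (positive or negative) Killing form and $\Phi$ is conformal. It follows that $u^*\omega_{\S^2}=(\pi\circ R\circ\Phi\circ\Psi)^*\omega_{\S^2}$ after rescaling to fix the Hopf invariant; in the negative case one composes with an orientation-reversing isometry, which is why $\Phi\in{\rm Conf}(\S^3)$ need not preserve orientation. Since $\rd u$ and $\rd(\pi\circ\cdots)$ have the same kernel (the fibers coincide and $u$ is constant on the fibres), $u$ factors as $u=h\circ\pi\circ\Phi\circ\Psi$ with $h^*\omega_{\S^2}=\omega_{\S^2}$, and $h$ preserves area. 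This is all that is needed when $\theta=\pi/2$, since the AFZ part imposes no condition on $\abs{\rd u}$.

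For $\theta<\pi/2$, equality also forces equality in the AM--GM step, so $u$ is horizontally weakly conformal. Then, as in the proof of Theorem~\ref{thm:hopf-3energy-intro}, $h$ is conformal and area-preserving, hence an isometry. The delicate points are two. First, the factorization of $u$ requires the fibres of $\pi$ to be connected and $u$ to be smooth across them. Second, one must check the converse: every such map $h\circ\pi\circ\Phi\circ\Psi$ attains equality, which holds because each inequality in the chain becomes an identity for the Hopf map and is preserved by conformal pullback.
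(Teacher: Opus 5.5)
Your proposal is correct and is essentially the paper's proof. Both arguments bound $E_{\rm Ni}$ via $\abs{\rd u}^2\ge 2\abs{\rd\alpha}$, write $E_{\rm AFZ}$ exactly as $2\int\abs{\rd\alpha}^{3/2}$, transfer to $\S^3$ by conformal invariance, apply Theorem~\ref{main_thm}, and characterize equality as in Theorem~\ref{thm:hopf-3energy-intro}, with AM--GM equality upgrading the area-preserving $h$ to an isometry when $\theta<\frac{\pi}{2}$. Your convex-combination shortcut is just the paper's ``Case 3'' done first, and your orientation reversal for $Q<0$ makes explicit what the paper hides in $\abs{\int\tilde\alpha\w\rd\tilde\alpha}$.

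The one step that both you and the paper pass over is your ``after rescaling''. Writing $\xi=c\,R^*\xi_0$ with $c>0$ gives $h^*\omega_{\S^2}=c\,\omega_{\S^2}$. Such an $h$ is a local diffeomorphism, hence a covering of the simply connected $\S^2$, hence a diffeomorphism; since also $c=\deg h$, this forces $c=1$, and in particular equality requires $\abs{Q(u)}=1$.
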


\begin{proof} For any $u:\R^3 \to \S^2$, let $\tilde u =u\circ \Psi^{-1}:\S^3\to \S^2$. Moreover,  let $\alpha\in\Omega^1(\R^3) $ satisfy $\rd \alpha =u^\ast \omega_{\S^2}$ and $\tilde \alpha=(\Psi^{-1})^*\alpha$. It is clear that $\rd\tilde \alpha=\tilde{u}^*\omega_{\S^2}$.

\smallskip

\noindent\textit{Case 1: $\theta=\frac{\pi}{2}$ (the AFZ model).}
By conformal invariance,
\eq{\label{eq:120}
    E_{{\rm AFZ}}(u) = 2\int_{\R^3}\abs{\rd{\alpha}}^{\frac{3}{2}} = 2\int_{\S^3}\abs{\rd\tilde \alpha}^{\frac{3}{2}}.
}
Theorem~\ref{main_thm} yields
\eq{\label{eq:121}
    \int_{\S^3}\abs{\rd\tilde \alpha}^{\frac{3}{2}}
    \geq 
\Bigl(2\omega_3^{\frac{1}{3}}\Abs{\int_{\S^3} \tilde \alpha\w\rd\tilde \alpha}\Bigr)^{\frac{3}{4}}=\Bigl(2\omega_3^{\frac{1}{3}}\Abs{\int_{\R^3} \alpha\w\rd\alpha}\Bigr)^{\frac{3}{4}}
    = 16\pi^2\abs{Q(u)}^{\frac{3}{4}}.
}
Combining \eqref{eq:120} and \eqref{eq:121} gives
\eq{
    E_{{\rm AFZ}}(u) \geq 32\pi^2\abs{Q(u)}^{\frac{3}{4}}.
}
As in the proof of Theorem~\ref{thm:hopf-3energy-intro}, one can characterize equality;
here $h$ is an area-preserving diffeomorphism.

\smallskip

\noindent\textit{Case 2: $\theta=0$ (the Nicole model).}
By conformal invariance,
\eq{\label{eq:100}
    E_{{\rm Ni}}({u})
    = \frac{1}{\sqrt{2}}\int_{\R^3}\abs{\rd {u}}^3
    = \frac{1}{\sqrt{2}}\int_{\S^3}\abs{\rd \tilde u}^3,
}
This is exactly the situation considered in Section~\ref{app:3-energy}.
Using the pointwise estimate
\eq{\label{eq:23a}
    \abs{\rd \tilde u}^2 \geq 2\abs{\rd\tilde \alpha},
}
we obtain the same lower bound as in Case~1.
In the equality case, one must also have equality in \eqref{eq:23a}, which forces $h$ to be an isometry.

\smallskip

\noindent\textit{Case 3: $\theta\in(0,\frac{\pi}{2})$.}
This follows by combining the previous two cases.
\end{proof}

\section{Regularity of solutions}\label{app:C}
We study the regularity of weak solutions of
\eq{\label{C1}
    \curl \alpha = \frac{n+1}{2}\abs{\alpha}^{\frac{2}{n-1}}\alpha.
}
The following statement is standard (and should be well known to experts); we include a sketch for the reader's convenience.

\begin{theorem}\label{thmC.1}
Let $\alpha\in W^{1, \frac{2n}{n+1}}\big(\Omega^{\frac{n-1}{2}}(\S^n)\big)$ be a weak solution of \eqref{C1}. Then $\alpha\in W^{1,2}(\S^n)$ and, moreover, $\alpha\in C^{1,\gamma}$ on $\{\alpha\neq 0\}$ for some $\gamma>0$.
\end{theorem}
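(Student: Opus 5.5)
The plan is to treat \eqref{C1} as a first-order elliptic system once it is coupled with a gauge condition. Since $\curl=*\rd$, taking $\rd^*$ of \eqref{C1} gives
\[
\rd^*\big(\abs{\alpha}^{\frac{2}{n-1}}\alpha\big)=0,
\]
because $\rd^*\curl\alpha=\pm *\rd\rd\alpha=0$. I would pass to the Coulomb representative: write $\alpha=\alpha_0+\rd\phi$ with $\rd^*\alpha_0=0$. The solution of \eqref{C1} is not invariant under $\ker \rd$, however, so I will not change $\alpha$ itself. Instead, I will work with the elliptic operator $\rd+\rd^*$ acting on $\alpha$, with $\rd^*\alpha$ controlled through the identity above. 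Expanding that identity yields
\[
\abs{\alpha}^{\frac{2}{n-1}}\rd^*\alpha=\nabla\big(\abs{\alpha}^{\frac{2}{n-1}}\big)\ip\alpha.
\]
On $\{\alpha\neq0\}$ this gives $\rd^*\alpha$ in terms of first derivatives of $\abs{\alpha}$ (cf.\ Lemma~\ref{lem4.1}). Globally, it is more convenient to write $w\coloneqq\abs{\alpha}^{\frac{2}{n-1}}\alpha$, which is co-closed and satisfies $\rd\alpha=\frac{n+1}{2}*w$.

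\emph{Step 1 (integrability bootstrap).} I start from $\alpha\in W^{1,\frac{2n}{n+1}}\subset L^{\frac{2n}{n-1}}$. The right-hand side of \eqref{C1} then lies in $L^{\frac{2n}{n+1}}$, which is the critical scaling, so a plain bootstrap does not improve anything. I would therefore use the standard Brezis--Kato / Trudinger trick. Write $\curl\alpha=V\alpha$ with $V=\frac{n+1}{2}\abs{\alpha}^{\frac{2}{n-1}}\in L^n$, split $V=V_1+V_2$ with $\norm{V_1}_{L^n}$ small and $V_2$ bounded, and apply the $L^p$ estimate for the elliptic system
\[
(\rd,\rd^*)\colon\alpha\mapsto(\rd\alpha,\rd^*\alpha)
\]
on the complement of harmonic forms. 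This requires controlling $\rd^*\alpha$. Here I would replace $\alpha$ by its co-closed part $\alpha_0$, noting $\rd\alpha=\rd\alpha_0$, so that $\curl\alpha_0=V\alpha$ with $\alpha=\alpha_0+\rd\phi$. The exact part is determined by $\rd^*(V\alpha)=0$, a degenerate elliptic equation for $\phi$ of $p$-Laplacian type with $p=\frac{2n}{n-1}$; it is the Euler--Lagrange equation of $\int\abs{\alpha_0+\rd\phi}^{\frac{2n}{n-1}}$. Standard $L^q$ theory for such systems with coefficients in $L^n$ (small in small balls) then gives $\alpha\in L^q$ for all $q<\infty$, hence $\curl\alpha\in L^q$ and $\alpha_0\in W^{1,q}$.

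\emph{Step 2 ($W^{1,2}$ and $C^{1,\gamma}$ away from zeros).} Uhlenbeck-type regularity for the $p$-harmonic-type system for $\rd\phi$ with $p=\frac{2n}{n-1}>2$ (degenerate) gives $\rd\phi\in C^{0,\gamma}$ and $\abs{\alpha}^{\frac{p-2}{2}}\alpha\in W^{1,2}$. Combined with $\alpha_0\in W^{1,q}$, this yields $\alpha\in C^{0,\gamma}$. On $\{\alpha\neq0\}$ the system is uniformly elliptic with Hölder coefficients, so Schauder theory gives $\alpha\in C^{1,\gamma}$ there. For global $W^{1,2}$, the nonlinearity $\abs{\alpha}^{\frac{2}{n-1}}\alpha$ is $C^1$ in $\alpha$ because $\frac{2}{n-1}>0$. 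Hence $\rd w\in L^2$ follows from a difference-quotient argument for the degenerate system. Then $\rd\alpha\in W^{1,2}$, and $\alpha\in W^{1,2}$ follows from the Gaffney inequality applied to $\rd\alpha\in L^2$ together with $\rd^*\alpha$, using the identity for $\rd^*\alpha$ and the bound $\abs{\nabla\abs{\alpha}}\le\abs{\nabla\alpha}$, which is absorbable.

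The main obstacle is Step 1 and the handling of the exact part: the degeneracy at $\{\alpha=0\}$ means $\rd^*\alpha$ is not directly controlled. I would first try to prove the difference-quotient estimate for $w$ directly, testing the equation against $\Delta_h\alpha$. That estimate must use the monotonicity of $\alpha\mapsto\abs{\alpha}^{\frac{2}{n-1}}\alpha$, and I expect this to be the delicate point.
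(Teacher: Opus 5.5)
Your proposal has a genuine gap at the global $W^{1,2}$ step, which is the real content of the theorem. Once $\alpha\in C^{0,\gamma}$, the form $\rd\alpha=\frac{n+1}{2}*(\abs{\alpha}^{\frac{2}{n-1}}\alpha)$ is bounded. So everything hinges on $\rd^*\alpha\in L^2$, i.e.\ on $W^{1,2}$ regularity of the exact part. Your plan inserts $\abs{\rd^*\alpha}\le\frac{2}{n-1}\abs{\nabla\abs{\alpha}}\le\frac{2}{n-1}\abs{\nabla\alpha}$ into
\[
\norm{\nabla\alpha}_2^2+\tfrac{(n+1)(n-1)}{4}\norm{\alpha}_2^2=\norm{\rd\alpha}_2^2+\norm{\rd^*\alpha}_2^2 ,
\]
and this fails for two reasons.

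First, absorbing $\frac{4}{(n-1)^2}\norm{\nabla\alpha}_2^2$ presupposes $\norm{\nabla\alpha}_2<\infty$, which is what you want to prove. You only know $\alpha\in W^{1,\frac{2n}{n+1}}$. Difference quotients for a degenerate system with $p>2$ give only $\abs{\alpha}^{\frac{1}{n-1}}\alpha\in W^{1,2}$, and this does not yield $\nabla\abs{\alpha}\in L^2$ near $\{\alpha=0\}$.

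Second, for $n=3$ the constant is exactly $1$. The inequality then collapses to $2\norm{\alpha}_2^2\le\norm{\rd\alpha}_2^2$, which gives no information on $\nabla\alpha$ in the most important case.

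The paper handles both problems as follows. It writes $\alpha=\beta+\rd\phi$ globally with $\rd^*\beta=0$, so $\beta\in C^{1,\gamma}$ by Schauder. It then minimizes the non-degenerate functional $\int_{\S^n}(\epsilon^2+\abs{\beta+\rd\psi}^2)^{\frac{n}{n-1}}$, whose minimizer has $\rd\phi_\epsilon\in W^{1,2}$ a priori. The absorption uses the Weitzenb\"ock identity for exact forms, $\norm{\nabla\rd\phi_\epsilon}_2\le\norm{\rd^*\rd\phi_\epsilon}_2$, together with the pointwise bound $\abs{\rd^*\alpha_\epsilon}\le\frac{2}{n-1}\abs{\nabla\alpha_\epsilon}$ from the regularized Euler--Lagrange equation. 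This gives an $\epsilon$-uniform bound for $n\ge5$.

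For $n=3$ the paper needs the sharper bound $\abs{\rd^*\alpha_\epsilon}\le\frac{1}{\sqrt3}\abs{\nabla\alpha_\epsilon}$. This holds because $\rd^*\alpha_\epsilon$ equals both $-\rtr\nabla\alpha_\epsilon$ and $(\nabla_{\alpha_\epsilon}\alpha_\epsilon)(\alpha_\epsilon)/(\epsilon^2+\abs{\alpha_\epsilon}^2)$. Letting $\epsilon\to0$ gives $\alpha\in W^{1,2}$, since the exact part is the unique minimizer by strict convexity.

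Your Step 1 is also unsupported. Brezis--Kato absorption is a linear device and needs $\alpha\in L^q$ a priori. Here the exact part depends nonlinearly on $\alpha_0$ through $\rd^*\bigl(\abs{\alpha_0+\rd\phi}^{p-2}(\alpha_0+\rd\phi)\bigr)=0$. The bound $\norm{\rd\phi}_q\lesssim\norm{\alpha_0}_q$ you would need is a nonlinear Calder\'on--Zygmund estimate, and you do not supply it.

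The paper avoids any $L^q$ bootstrap:
\begin{enumerate}
\item On small balls it uses the Iwaniec--Lutoborski decomposition $\alpha=\beta_r+\rd\phi_r$ with $\norm{\beta_r}_{L^{\frac{2n}{n-1}}}\lesssim\norm{\alpha}_{L^{\frac{2n}{n-1}}(B_r)}^{\frac{n+1}{n-1}}$; this bound is superlinear, hence small.
\item It compares $\rd\phi_r$ with its $\frac{2n}{n-1}$-harmonic replacement, using monotonicity.
\item It iterates to the Morrey decay $\int_{B_r}\abs{\alpha}^{\frac{2n}{n-1}}\lesssim r^\mu$ for every $\mu<n$.
\item It concludes $\alpha\in C^{0,\gamma}$ from Uhlenbeck's estimates and Campanato's theorem.
\end{enumerate}
Your final step, $C^{1,\gamma}$ on $\{\alpha\neq0\}$ by uniform ellipticity, agrees with the paper.
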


\begin{proof}

\noindent\emph{Step 1: H\"older continuity of $\alpha$.}
Fix a small ball $B_r\subset \S^n$. 
The averaged Poincar\'e homotopy estimate (see \cite[Proposition 4.1]{IL93}) implies that there exists a bounded linear operator $T_r$ such that
\eq{
    \alpha = T_r(\rd\alpha) + \rd(T_r\alpha) \eqcolon \beta_r + \rd\phi_r,
}
and
\eq{
    \norm{\beta_r}_{L^{\frac{2n}{n-1}}(B_r)} \lesssim \norm{\rd\alpha}_{L^{\frac{2n}{n+1}}(B_r)}
    = \norm{\alpha}_{L^{\frac{2n}{n-1}}(B_r)}^{\frac{n+1}{n-1}}.
}
Let $h=\rd\psi$ be the $p$-harmonic replacement of $\rd\phi_r$ in $B_r$, with $p=\frac{2n}{n-1}$; namely, $\psi$ minimizes $\int_{B_r}\abs{\rd\psi}^{p}$ among competitors with $\psi-\phi_r\in W^{1,p}_0(B_r)$. Then
\eq{
    \rd h = 0, \qquad \rd^*\big( \abs{h}^{\frac{2}{n-1}}h \big) = 0,
}
and the interior estimate yields (see, e.g., \cite[Theorem~4.1]{Hamburger92})
\eq{\label{C2}
    \sup_{B_{r/2}} \abs{h}^{\frac{2n}{n-1}} \lesssim r^{-n}\int_{B_r}\abs{h}^{\frac{2n}{n-1}}.
}
Testing 
\eq{
    \rd^*\big( \abs{\alpha}^{\frac{2}{n-1}}\alpha \big) = 0, \qquad
    \rd^*\big( \abs{h}^{\frac{2}{n-1}}h \big) = 0
}
against $\phi-\psi$ and subtracting, we obtain
\eq{
    \int_{B_r} \<\abs{\alpha}^{\frac{2}{n-1}}\alpha - \abs{h}^{\frac{2}{n-1}}h, \alpha-h\>
    = \int_{B_r}\<\abs{\alpha}^{\frac{2}{n-1}}\alpha - \abs{h}^{\frac{2}{n-1}}h, \beta\>.
}
Using the standard monotonicity and growth bounds
\eq{
    \<\abs{\alpha}^{\frac{2}{n-1}}\alpha - \abs{h}^{\frac{2}{n-1}}h, \alpha-h\> \geq c_n\abs{\alpha-h}^{\frac{2n}{n-1}},
}
\eq{
    \Abs{\abs{\alpha}^{\frac{2}{n-1}}\alpha - \abs{h}^{\frac{2}{n-1}}h}
    \leq C_n\big( \abs{\alpha}^{\frac{n+1}{n-1}} + \abs{h}^{\frac{n+1}{n-1}} \big),
} we obtain, by
 H\"older's inequality, 
\eq{\label{C3}
    \int_{B_r}\abs{\alpha-h}^{\frac{2n}{n-1}}
    \lesssim \Big( \norm{\alpha}_{\frac{2n}{n-1}}^{\frac{n+1}{n-1}} + \norm{h}_{\frac{2n}{n-1}}^{\frac{n+1}{n-1}} \Big)
    \norm{\beta_r}_{\frac{2n}{n-1}}.
}
For $r$ sufficiently small we may assume $\int_{B_r}\abs{\alpha}^{\frac{2n}{n-1}}\leq \epsilon$ (with $\epsilon$ to be fixed).  By minimality of $h$ we  have $\norm{h}_{\frac{2n}{n-1}}\lesssim \norm{\alpha}_{\frac{2n}{n-1}}$ on $B_r$, and hence
\eq{
    \int_{B_r}\abs{\alpha-h}^{\frac{2n}{n-1}} \lesssim \Big( \int_{B_r}\abs{\alpha}^{\frac{2n}{n-1}} \Big)^{\frac{n+1}{n}} \leq \epsilon\Big(\int_{B_r}\abs{\alpha}^{\frac{2n}{n-1}} \Big)^{\frac{1}{n}}.
}
Define
\eq{
    \Psi_x(s) \coloneqq \int_{B_s(x)}\abs{\alpha}^{\frac{2n}{n-1}}.
}
Using \eqref{C2},  for $\theta\in\big(0,\frac12\big)$ we get 
\eq{
    \int_{B_{\theta r}(x)}\abs{h}^{\frac{2n}{n-1}} \lesssim \theta^n \Psi_x(r).
}
Combining the last two bounds yields
\eq{
    \Psi_x(\theta r) \leq C\big( \theta^n + \Psi_x(r)^{\frac{1}{n}} \big)\Psi_x(r).
}
Fix $\mu<n$. Choose $\theta\in\big(0,\frac12\big)$ such that $C\theta^n\leq\frac12\theta^\mu$, and then choose $\epsilon>0$ such that $C\epsilon^{1/n}\leq\frac12\theta^\mu$. Then
\eq{\label{C4}
    \Psi_x(\theta r) \leq \theta^\mu\Psi_x(r).
}
Iterating \eqref{C4} yields
\eq{
    \int_{B_r(x)}\abs{\alpha}^{\frac{2n}{n-1}} \leq C(\mu,\alpha)\, r^\mu, \qquad x\in\S^n.
}
We may now apply Uhlenbeck's estimate (see, e.g., \cite[Theorem~4.1]{Hamburger92}) to deduce that
\eq{
    \inf_c \int_{B_r(x)}\abs{\alpha-c}^{\frac{2n}{n-1}} \lesssim r^{n+\frac{2n}{n-1}\sigma}
}
for some $\sigma>0$. Campanato's theorem \cite{Campanato63} then implies $\alpha\in C^{0,\gamma}$ for some $\gamma>0$.

\smallskip
\noindent\emph{Step 2: the $W^{1,2}$ estimate.}
Since $\alpha\in C^{0,\gamma}$, the right-hand side of \eqref{C1} is H\"older, and hence so is $\rd\alpha$. The form $\beta$ satisfies the elliptic system
\eq{
    \rd\beta = \rd\alpha\in C^{0,\gamma}, \qquad \rd^*\beta=0,
}
so standard Schauder theory implies $\beta\in C^{1,\gamma}\subset W^{1,2}$.

To estimate $\rd\phi$ we use a regularization argument. Define
\eq{
    J_\epsilon(\psi) \coloneqq \frac{n-1}{2n}\int_{\S^n}\big( \epsilon^2 + \abs{\beta+\rd\psi}^2 \big)^{\frac{n}{n-1}},
}
and let $\phi_\epsilon$ be a minimizer subject to $\rd^*\phi_\epsilon=0$ (when $n=3$, subject to $\int_{\S^3}\phi_\epsilon=0$). Set $\alpha_\epsilon\coloneqq \beta+\rd\phi_\epsilon$. The Euler--Lagrange equation is 
\eq{\label{C5}
    \rd^*\Big( (\epsilon^2+\abs{\alpha_\epsilon}^2)^{\frac{1}{n-1}}\alpha_\epsilon \Big) = 0.
}
Since $\rd^*\beta=0$, expanding \eqref{C5} yields
\eq{\label{C6}
    \abs{\rd^*\rd\phi_\epsilon}
    \leq \frac{2}{n-1}\abs{\nabla\alpha_\epsilon}
    \leq \frac{{2}}{n-1}\big(\abs{\nabla\rd\phi_\epsilon}+\abs{\nabla\beta}\big).
}
On $\S^n$, the Weitzenb\"ock formula gives
\eq{\label{C7}
    \norm{\nabla\rd\phi_\epsilon}_2^2 + \frac{(n+1)(n-1)}{4}\norm{\rd\phi_\epsilon}_2^2
    = \norm{\rd^*\rd\phi_\epsilon}_2^2.
}
Combining \eqref{C6} and \eqref{C7} we obtain
\eq{
    \norm{\nabla\rd\phi_\epsilon}_2
    \leq \frac{2}{n-1}\norm{\nabla\rd\phi_\epsilon}_2 + \frac{2}{n-1}\norm{\nabla\beta}_2.
}
If $n\geq 5$ this implies
\eq{
    \norm{\nabla\rd\phi_\epsilon}_2 \leq \frac{2}{n-3}\norm{\nabla\beta}_2,
}
which implies  
$\rd\phi\in W^{1,2}$ and hence $\alpha \in W^{1,2}$.

If $n=3$, then \eqref{C5} becomes
\eq{
    \rd^*\Big( (\epsilon^2+\abs{\alpha_\epsilon}^2)^{\frac{1}{2}}\alpha_\epsilon \Big) = 0,
}
where $\alpha_\epsilon$ is a $1$-form. A direct algebraic computation shows
\eq{\label{C8}
    \abs{\rd^*\rd\phi_\epsilon} = \abs{\rd^*\alpha_\epsilon} \leq \frac{1}{\sqrt{3}}\abs{\nabla\alpha_\epsilon}.
}
Replacing \eqref{C6} by \eqref{C8} and repeating the same argument gives again $\alpha\in W^{1,2}$.

\smallskip
\noindent\emph{Step 3: higher regularity away from the zero set.}
On $\{\alpha\neq0\}$ the system 
\eq{
    \rd\alpha = \frac{n+1}{2}\abs{\alpha}^{\frac{2}{n-1}}*\alpha, \qquad \rd^*\big( \abs{\alpha}^{\frac{2}{n-1}}\alpha \big) = 0
}
is uniformly elliptic, so standard elliptic regularity implies $\alpha\in C^{1,\gamma}$ (indeed, $C^\infty$) there.
\end{proof}

\medskip

\noindent{\it Acknowledgements.} The authors thank Rupert Frank and Derek Harland for their interest in this work and for helpful communication regarding the Faddeev--Skyrme model.

\medskip 

\noindent\textsc{Data availability.}
No new data were created or analyzed in this study; data sharing is not applicable.

\medskip

\noindent\textsc{Competing interests.}
The authors declare that they have no competing interests.

\printbibliography

@article{B19,
  title={The curl operator on odd-dimensional manifolds},
  author={B{\"a}r, C.},
  journal={Journal of Mathematical Physics},
  shortjournal={J. Math. Phys.},
  volume={60},
  number={3},
  year={2019},
  publisher={AIP Publishing}
}

@article{Montiel23IsoCurl,
  title={The Isoperimetric Problem for the Curl Operator},
  author={Montiel, S.},
  eprint={2307.09556},
  year={2023},
  archivePrefix= {arXiv}
}

@article{BCV26WeylCurl,
  title={Higher order Weyl coefficients for the operator curl},
  author={Bracchi, G. and Capoferri, M. and Vassiliev, D.},
  eprint={2607.03273},
  year={2026},
  archivePrefix= {arXiv}
}

@article{CDGT00,
  title={Isoperimetric problems for the helicity of vector fields and the {B}iot--{S}avart and curl operators},
  author={Cantarella, J. and DeTurck, D. and Gluck, H. and Teytel, M.},
  journal={Journal of Mathematical Physics},
  shortjournal={J. Math. Phys.},
  volume={41},
  number={8},
  pages={5615--5641},
  year={2000},
  publisher={American Institute of Physics}
}

@article{FL22,
  title        = {Existence of optimizers in a {S}obolev inequality for vector fields},
  author       = {Frank, R. L. and Loss, M.},
  journal      = {Ars Inven. Anal.},
  year         = {2022},
  number       = {1},
  publisher    = {Ars Inveniendi}
}

@article{Figalli_Zhang_20,
  title={Sharp gradient stability for the {S}obolev inequality},
  author={Figalli, A. and Zhang, Y. R. Y.},
  journal={Duke Math. J. },
  volume={171},
  number={12},
  pages={2407--2459},
  year={2022},
  publisher={Duke University Press}
}

@article{WZ25b,
  title={Conformal invariants for the zero mode equation},
  author={Wang, G. and Zhang, M.},
  year         = {2025},
  eprint       = {2512.17854},
  archivePrefix= {arXiv}
}

@article{Z26,
  title={A conformal lower bound of weighted {D}irac eigenvalues on manifolds with boundary},
  author={Zhang, M.},
  year={2026},
  eprint       = {2603.10875},
  archivePrefix= {arXiv}
}

@article{MS25,
  title={Optimizers in {S}obolev-curl inequalities},
  author={Mederski, J. and Szulkin, A.},
  eprint={2511.01432},
  year={2025},
  archivePrefix= {arXiv}
}

@article{WaZh26,
  author = {Wang, G. and Zhang, M.},
  title  = {On the sharp constants in curl-Sobolev inequalities on $\mathbb{S}^n$},
  eprint = {2607.19091},
  year = {2026},
  archivePrefix= {arXiv}
}

@article{ColboisElSoufi06,
  author  = {Colbois, B. and El Soufi, A.},
  title   = {Eigenvalues of the {L}aplacian acting on $p$-forms and metric conformal deformations},
  journal = {Proceedings of the American Mathematical Society},
  shortjournal = {Proc. Amer. Math. Soc.},
  volume  = {134},
  year    = {2006},
  pages   = {715--721}
}

@article{Hersch70,
  author  = {Hersch, J.},
  title   = {Quatre propri{\'{e}}t{\'{e}}s isop{\'{e}}rim{\'{e}}triques de membranes sph{\'{e}}riques homog{\`{e}}nes},
  journal = {C. R. Acad. Sci. Paris S{\'{e}}r. A--B},
  volume  = {270},
  year    = {1970},
  pages   = {1645--1648}
}

@article{ElSoufiIlias86,
  author  = {El Soufi, A. and Ilias, S.},
  title   = {Immersions minimales, premi{\`{e}}re valeur propre du laplacien et volume conforme},
  journal = {Mathematische Annalen},
  shortjournal = {Math. Ann.},
  volume  = {275},
  number  = {2},
  year    = {1986},
  pages   = {257--267}
}

@article{FL1,
  title        = {Which magnetic fields support a zero mode?},
  author       = {Frank, R. L. and Loss, M.},
  journal      = {J. Reine Angew. Math.},
  volume       = {788},
  pages        = {1--36},
  year         = {2022},
  publisher    = {De Gruyter}
}

@book{Lawson,
  title={Spin Geometry},
  author={Lawson, H. B. and Michelsohn, M. L.},
  isbn={9780691085425},
%  lccn={89032544},
  series={Princeton Mathematical Series},
 % url={https://books.google.de/books?id=EWX1vgEACAAJ},
  year={1989},
  publisher={Princeton University Press}
}

@article{EGP25,
  title={Optimal metrics for the first curl eigenvalue on 3-manifolds},
  author={Enciso, A. and Gerner, W. and Peralta-Salas, D.},
  journal={Calculus of Variations and Partial Differential Equations},
  shortjournal={Calc. Var. PDEs},
  volume={64},
  number={146},
  year={2025},
  publisher={Springer}
}

@article{MarquesNevesWillmore2014,
  title={Min-max theory and the {W}illmore conjecture},
  author={Marques, F. C. and Neves, A.},
  journal={Annals of mathematics},
  shortjournal={Ann. of Math},
  volume={179},
  number={2},
  pages={683--782},
  year={2014},
  publisher={JSTOR}
}

@article{Loss_Yau_86,
  title={Stability of {C}oulomb systems with magnetic fields: {III}. {Z}ero energy bound states of the {P}auli operator},
  author={Loss, M. and Yau, H. T.},
  journal={Communications in mathematical physics},
  shortjournal={Comm. Math. Phys.},
  volume={104},
  number={2},
  pages={283--290},
  year={1986},
  publisher={Springer}
}

@article{Reuss25,
  title={A note on the existence of nontrivial zero modes on {R}iemannian manifolds},
  author={Reu{\ss}, Jonah},
year={2025},
  eprint       = {2503.01602},
  archivePrefix= {arXiv}
}

@article{Julio26,
  title={Spinor inequality for magnetic fields on spin manifolds},
  author={Julio-Batalla, J.},
  eprint={2603.23218},
  year={2026},
  archivePrefix= {arXiv}
}

@article{Frank_Loss_2024,
  title={A sharp criterion for zero modes of the {D}irac equation},
  author={Frank, R. L. and Loss, M.},
  journal={Journal of the European Mathematical Society},
  shortjournal={J. Eur. Math. Soc. (JEMS)},
  volume={28},
  number={1},
  pages={305--331},
  year={2026}
}

@article{Jammes07,
  title={Minoration conforme du spectre du laplacien de {H}odge-de {R}ham},
  author={Jammes, P.},
  journal={Manuscripta Math.},
  volume={123},
  number={1},
  pages={15--23},
  year={2007},
  publisher={Springer}
}

@article{EGP22,
  title={Optimal convex domains for the first curl eigenvalue in dimension three},
  author={Enciso, A. and Gerner, W. and Peralta-Salas, D.},
  journal={Transactions of the American Mathematical Society},
  shortjournal={Trans. Amer. Math. Soc.},
  volume={377},
  number={07},
  pages={4519--4540},
  year={2024}
}

@article{Riviere98CAG,
  title        = {Minimizing fibrations and $p$-harmonic maps in homotopy classes from $\mathbb{S}^3$ into $\mathbb{S}^2$},
  author       = {Rivi{\`e}re, T.},
  journal      = {Communications in Analysis and Geometry},
  shortjournal = {Comm. Anal. Geom.},
  volume       = {6},
  number       = {3},
  pages        = {427--483},
  year         = {1998}
}

@article{Isobe08,
  title={On a minimizing property of the {H}opf soliton in the {F}addeev--{S}kyrme model},
  author={Isobe, T.},
  journal={Reviews in Mathematical Physics},
  shortjournal={Rev. Math. Phys.},
  volume={20},
  number={07},
  pages={765--786},
  year={2008},
  publisher={World Scientific}
}

@article{VK79,
  title        = {Stability of solitons in $S^2$ in the nonlinear $\sigma$-model},
  author       = {Vakulenko, A. F. and Kapitanski, L. V.},
  journal      = {Dokl. Akad. Nauk SSSR},
  volume       = {246},
  number       = {4},
  pages        = {840--842},
  year         = {1979}
}

@article{Esteban86,
  title        = {A direct variational approach to {S}kyrme's model for meson fields},
  author       = {Esteban, M. J.},
  journal      = {Comm. Math. Phys.},
  volume       = {105},
  pages        = {571--591},
  year         = {1986}
}

@article{LinYang04,
  title        = {Existence of energy minimizers as stable knotted solitons in the {F}addeev model},
  author       = {Lin, F. and Yang, Y.},
  journal      = {Comm. Math. Phys.},
  volume       = {249},
  pages        = {273--303},
  year         = {2004}
}

@article{DelPino02,
  title={Best constants for {G}agliardo--{N}irenberg inequalities and applications to nonlinear diffusions},
  author={Del Pino, M. and Dolbeault, J.},
  journal={Journal de Math{\'e}matiques Pures et Appliqu{\'e}es},
  shortjournal={J. Math. Pures Appl. (9)},
  volume={81},
  number={9},
  pages={847--875},
  year={2002},
  publisher={Elsevier}
}

@article{Case15,
  title={A {Y}amabe-type problem on smooth metric measure spaces},
  author={Case, J. S.},
  journal={Journal of Differential Geometry},
  shortjournal={J. Differ. Geom.},
  volume={101},
  number={3},
  pages={467--505},
  year={2015},
  publisher={Lehigh University}
}

@article{GLZ26,
  title={Global Minimality of the {H}opf Map in the {F}addeev--{S}kyrme Model with Large Coupling Constant},
  author={Guerra, A. and Lamy, X. and Zemas, K.},
  journal={Communications in Mathematical Physics},
  shortjournal={Comm. Math. Phys.},
  volume={407},
  number={8},
  pages={171},
  year={2026},
  publisher={Springer}
}

@article{Ward99,
  title={Hopf solitons on $\mathbb{S}^3$ and $\mathbb{R}^3$},
  author={Ward, R. S.},
  journal={Nonlinearity},
  volume={12},
  number={2},
  pages={241--246},
  year={1999}
}

@article{SpeightSvensson07,
  title={On the strong coupling limit of the {F}addeev--{H}opf model},
  author={Speight, J. M. and Svensson, M.},
  journal={Communications in mathematical physics},
  shortjournal={Comm. Math. Phys.},
  volume={272},
  number={3},
  pages={751--773},
  year={2007},
  publisher={Springer}
}

@article{SpeightSvensson11,
  title={Some global minimizers of a symplectic {D}irichlet energy},
  author={Speight, J. M. and Svensson, M.},
  journal={Quarterly journal of mathematics},
  shortjournal={Q. J. Math.},
  volume={62},
  number={3},
  pages={737--745},
  year={2011},
  publisher={OUP}
}

@article{Riviere23,
  title={Harmonic maps from $\mathbb{S}^3$ into $\mathbb{S}^2$ with low Morse index},
  author={Rivi{\`e}re, T.},
  journal={J. Differ. Geom.},
  volume={125},
  number={1},
  pages={173--185},
  year={2023},
  publisher={Lehigh University}
}

@article{A03,
  title={A variational problem in conformal spin geometry},
  author={Ammann, B.},
  journal={Habilitationsschrift, Universit{\"a}t Hamburg},
  year={2003}
}

@article{Harland13MassivePions,
  title        = {Topological energy bounds for the {S}kyrme and {F}addeev models with massive pions},
  author       = {Harland, Derek},
  journal      = {Physics Letters B},
  shortjournal = {Phys. Lett. B},
  volume       = {728},
  pages        = {518--523},
  year         = {2014},
 % doi          = {10.1016/j.physletb.2013.11.062},
 % eprint       = {1311.2403},
 % archivePrefix= {arXiv},
 % primaryClass = {hep-th}
}

@article{LinYang07,
  title={Energy splitting, substantial inequality, and minimization for the {F}addeev and {S}kyrme models},
  author={Lin, F. and Yang, Y.},
  journal={Comm. Math. Phys.},
  volume={269},
  number={1},
  pages={137--152},
  year={2007},
  publisher={Springer}
}

@article{Gillard10,
  title={Hopf solitons in the {AFZ} model},
  author={Gillard, M.},
  journal={Nonlinearity},
  volume={24},
  number={10},
  pages={2729--2743},
  year={2011}
}

@article{Nicole78,
  title={Solitons with non-vanishing {H}opf index},
  author={Nicole, D. A.},
  journal={Journal of Physics G: Nuclear Physics},
  volume={4},
  number={9},
  pages={1363--1369},
  year={1978}
}

@article{AFZ99,
  title={Exact static soliton solutions of $(3+1)$-dimensional integrable theory with nonzero {H}opf numbers},
  author={Aratyn, H. and Ferreira, L. A. and Zimerman, A. H.},
  journal={Physical review letters},
  volume={83},
  number={9},
  pages={1723},
  year={1999},
  publisher={APS}
}

@article{Sutcliffe07,
  title={Knots in the {S}kyrme--{F}addeev model},
  author={Sutcliffe, P.},
  journal={Proceedings of the Royal Society A: Mathematical, Physical and Engineering Sciences},
  volume={463},
  number={2087},
  pages={3001--3020},
  year={2007},
  publisher={The Royal Society London}
}

@article{Faddeev,
  title={Some comments on the many-dimensional solitons},
  author={Faddeev, L. D.},
  journal={Letters in Mathematical Physics},
  volume={1},
  number={4},
  pages={289--293},
  year={1976},
  publisher={Springer}
}

@article{KunduRybakov82,
  title={Closed-vortex-type solitons with {H}opf index},
  author={Kundu, A. and Rybakov, Yu. P.},
  journal={Journal of Physics A: Mathematical and General},
  volume={15},
  number={1},
  pages={269--275},
  year={1982}
}

@article{IL93,
  title={Integral estimates for null {L}agrangians},
  author={Iwaniec, T. and Lutoborski, A.},
  journal={Archive for Rational Mechanics and Analysis},
  volume={125},
  number={1},
  pages={25--79},
  year={1993},
  publisher={Springer}
}

@article{Hamburger92,
author = {Hamburger, C.},
journal = {Journal für die reine und angewandte Mathematik},
pages = {7-64},
title = {Regularity of differential forms minimizing degenerate elliptic functionals.},
volume = {431},
year = {1992},
}

@article{Campanato63,
  title={Propriet{\`a} di h{\"o}lderianit{\`a} di alcune classi di funzioni},
  author={Campanato, S.},
  journal={Annali della Scuola Normale Superiore di Pisa-Scienze Fisiche e Matematiche},
  volume={17},
  number={1-2},
  pages={175--188},
  year={1963}
}

\end{document}